\documentclass[a4paper,11pt]{article}
\pdfoutput=1
\usepackage{hyperref}
\hypersetup{hypertexnames = false, bookmarksdepth = 2, bookmarksopen = true, colorlinks, linkcolor = black, citecolor = black, urlcolor = black, pdfstartview={XYZ null null 1}}

\usepackage{amsfonts}
\usepackage[fleqn, leqno]{amsmath}
\usepackage{amsthm}

\usepackage{zref-clever}
\zcsetup{cap}
\zcsetup{noabbrev}
\newcommand{\cref}[1]{\zcref{#1}}
\newcommand{\Cref}[1]{\zcref[S]{#1}}

\usepackage{booktabs}
\usepackage{colonequals}
\usepackage{enumitem}
\usepackage{fullpage}
\usepackage{mathtools}
\usepackage{parskip}
\usepackage{thmtools}
\usepackage{tikz-cd}
\usepackage{xparse}
\usepackage{xpatch}
\usepackage{xspace}

\usepackage[utf8]{inputenc}
\usepackage[T1]{fontenc}
\usepackage{libertine}
\usepackage[libertine]{newtxmath}
\usepackage[scaled=0.83]{beramono}
\usepackage{eucal}
\usepackage{microtype}
\usepackage{gitinfo2}

\usetikzlibrary{arrows.meta}
\usetikzlibrary{calc}

\usepackage[backend=biber, maxbibnames=99, datamodel=mrnumber, sortcites]{biblatex}

\DeclareFieldFormat{mrnumber}{%
  MR\addcolon\space
  \ifhyperref
    {\href{http://www.ams.org/mathscinet-getitem?mr=#1}{\nolinkurl{#1}}}
    {\nolinkurl{#1}}}

\renewbibmacro*{doi+eprint+url}{%
  \iftoggle{bbx:doi}
    {\printfield{doi}}
    {}%
  \newunit\newblock
  \printfield{mrnumber}%
  \newunit\newblock
  \iftoggle{bbx:eprint}
    {\usebibmacro{eprint}}
    {}%
  \newunit\newblock
  \iftoggle{bbx:url}
    {\usebibmacro{url+urldate}}
    {}}

\DeclareSourcemap{
  \maps[datatype=bibtex]{
    \map{
      \step[fieldset=issn, null]
    }
  }
}
\xpretobibmacro{bib+doi+url}
  {\iffieldundef{doi}
     {}
     {\clearfield{url}}}
  {}{}

\xpretobibmacro{cite+doi+url}
  {\iffieldundef{doi}
     {}
     {\clearfield{url}}}
  {}{}

\AtEveryCite{\upshape}

\declaretheoremstyle[spaceabove = 3pt, spacebelow = 0pt, bodyfont = \itshape]{theorem}
\declaretheoremstyle[spaceabove = 3pt, spacebelow = 0pt]{remark}

\declaretheorem[style=theorem]{theorem}
\declaretheorem[style=theorem, sibling=theorem]{corollary}
\declaretheorem[style=theorem, sibling=theorem]{lemma}
\declaretheorem[style=theorem, sibling=theorem]{proposition}

\declaretheorem[style=remark, sibling=theorem]{definition}

\declaretheorem[style=remark, sibling=theorem]{notation}
\declaretheorem[style=remark, sibling=theorem]{remark}

\declaretheorem[style=theorem, numberwithin=section, title=Theorem]{alphatheorem}

\zcRefTypeSetup{alphatheorem}{Name-sg=Theorem,name-sg=theorem,Name-pl=Theorems,name-pl=theorems}
\zcRefTypeSetup{alphaconjecture}{Name-sg=Conjecture,name-sg=conjecture,Name-pl=Conjectures,name-pl=conjectures}
\zcRefTypeSetup{alphacorollary}{Name-sg=Corollary,name-sg=corollary,Name-pl=Corollaries,name-pl=corollaries}
\zcRefTypeSetup{alphaproposition}{Name-sg=Proposition,name-sg=proposition,Name-pl=Propositions,name-pl=propositions}

\zcRefTypeSetup{enumi}{refbounds={,\textup{(},\textup{)},}}

\setlist[enumerate]{font=\normalfont}

\mathchardef\mhyphen="2D

\newcommand\vect[1]{\ensuremath{\boldsymbol #1}}

\newcommand{\quivertools}{{\tt{QuiverTools}}\xspace}

\newcommand\QM[1]{%
  \ifcase#1
    \mathbf{?}%
    \or \texorpdfstring{\ensuremath{\mathrm{M}(1,2^4;4)}}{M(1,2,2,2,2;4)}%
    \or \texorpdfstring{\ensuremath{\mathrm{M}(1^5,2;3)}}{M(1,1,1,1,1,2;3)}%
    \or \texorpdfstring{\ensuremath{\mathrm{M}(1^4,2^2;3)}}{M(1,1,1,1,2,2;3)}%
    \or \texorpdfstring{\ensuremath{\mathrm{M}(1^7;2)}}{M(1,1,1,1,1,1,1;2)}%
    \else
    \mathbf{?}%
  \fi
}

\newcommand{\framed}{\mhyphen\mathrm{fr}}
\newcommand{\stable}{\mhyphen\mathrm{st}}
\newcommand{\stableorsemistable}{\mhyphen\mathrm{(s)st}}
\newcommand{\semistable}{\mhyphen\mathrm{sst}}

\DeclareDocumentCommand\modulispace{om}{\IfNoValueTF{#1}{\mathrm{M}{(#2)}}{\mathrm{M}^{#1}(#2)}}
\DeclareDocumentCommand\repspace{om}{\IfNoValueTF{#1}{\mathrm{R}{(#2)}}{\mathrm{R}^{#1}(#2)}}

\newcommand\can{\ensuremath{\mathrm{can}}}
\newcommand\nilpotent{\ensuremath{\mathrm{nilp}}}

\DeclareMathOperator\Aut{Aut}
\DeclareMathOperator\Bl{Bl}
\DeclareMathOperator\dimvect{\underline{dim}}

\DeclareMathOperator\GL{GL}
\DeclareMathOperator\Gm{\mathbb{G}_{\mathrm{m}}}
\DeclareMathOperator\Gr{Gr}
\DeclareMathOperator\Hom{Hom}

\DeclareMathOperator\PGL{PGL}

\DeclareMathOperator\Stab{Stab}
\DeclareMathOperator\supp{supp}
\DeclareMathOperator\Sym{Sym}

\newcommand{\msubspacequiverdims}[4][0.9]{%
  \begin{tikzpicture}%
    \pgfmathsetmacro{\msqR}{#1}%
    \pgfmathsetmacro{\msqLR}{#1 + 0.40}%
    \node (msqC) at (0,0) {};%
    \foreach \k in {1,...,#2}{%
      \pgfmathsetmacro{\msqA}{90 - ((\k - 1)*360.0/#2)}%
      \node (msqV\k) at (\msqA:\msqR cm) {};%
      \draw[->] (msqV\k) edge (msqC);%
    }%
    \foreach \msqLbl [count=\k] in {#3}{%
      \pgfmathsetmacro{\msqA}{90 - ((\k - 1)*360.0/#2)}%
      \draw (msqV\k) circle (2pt);%
      \node at (\msqA:\msqLR cm) {$\msqLbl$};%
    }%
    \draw (msqC) circle (2pt) node [right=0.5em, inner sep=2pt] {$#4$};%
  \end{tikzpicture}%
}

\usepackage{xpatch}

\makeatletter
\patchcmd\blx@bblinput{\blx@blxinit}
                      {\blx@blxinit
                      }{}{\fail}
\makeatother

\title{Fano 4-fold quiver moduli from subspace quivers}
\author{Pieter Belmans \and Markus Reineke}

\makeatletter%
\begin{document}
\maketitle

\begin{abstract}
  We classify the moduli spaces of representations of subspace quivers
  which are Fano fourfolds, under a natural assumption on the dimension vector.
  These moduli spaces can also be described as
  GIT quotients of products of Grassmannians
  by the diagonal action of a projective linear group,
  and there are exactly four of them.
  They are rational, of pure Hodge--Tate type, infinitesimally rigid,
  and have finite automorphism groups,
  with Picard ranks~5,~6,~6 and~7,
  making them interesting examples in
  the classification of Fano fourfolds of large Picard rank,
  as they are not toric or products.
  Two are known varieties: Manivel's Segre cousin of the Segre cubic 3-fold,
  and the Fano model of the blowup of~$\mathbb{P}^4$ in six points.
  The other two appear to be new:
  one is an involution surface bundle over~$\mathbb{P}^2$,
  and the other is a ``Segre cousin once-removed'',
  whose geometry closely parallels that of the Segre cousin.
  Using techniques from quiver moduli, which we survey,
  we describe the geometry of all four fourfolds in detail.
\end{abstract}

{\setcounter{tocdepth}{1}%
\makeatletter
\patchcmd{\l@section}{\addvspace{1.0em \@plus\p@}}{\addvspace{0.2em \@plus\p@}}{}{}%
\makeatother
\tableofcontents}

\section{Introduction}
In \cite{MR4739832}
Manivel studies a four-dimensional cousin of the famous Segre cubic.
It arose as an interesting example in the ongoing classification of Fano 4-folds
using zero loci of homogeneous vector bundles on partial flag varieties \cite{fanofourfolds}:
it is the Fano 4-fold with largest Picard number (namely~6) in this database;
moreover it is infinitesimally rigid and has finite automorphism group.

In \cite[\S6]{MR4622141}
Casagrande surveys known Fano 4-folds of large Picard number (meaning at least~6),
observing that there are very few known examples which are not
toric varieties or products of lower-dimensional Fano varieties,
and that one of those interesting examples is Manivel's Segre cousin.
Another one, of Picard rank~7,
is the Fano model of~$\Bl_6\mathbb{P}^4$ \cite[Example~25]{MR4622141}.

In this article we use quiver moduli for subspace quivers
to exhibit more Fano 4-folds with the same remarkable properties as the Segre cousin.
In particular,
we realize both the Segre cousin
and the Fano model of~$\Bl_6\mathbb{P}^4$ using quiver moduli.
But we also discover a Fano 4-fold which one could call
a \emph{Segre cousin once-removed}:
it shares many, but not all, numerical invariants with the Segre cousin,
and the two varieties behave very similarly.
Namely,
like the Segre cousin
it admits a birational semismall contraction to a smooth fourfold,
with~$\Gr(2,4)$ replaced by~$\mathbb{P}^2\times\mathbb{P}^2$,
and the discriminant configuration of five planes meeting in ten points
is replaced by four quadrics meeting in six points.
We moreover find a Fano 4-fold of Picard rank~5,
with a rich fibration structure.

Our main result is the following.
\begin{alphatheorem}
  \label{theorem:main}
  There are Fano fourfolds defined as GIT quotients of products of Grassmannians
  \begin{equation}
    \begin{aligned}
      \QM{1}&=(\Gr(1,4)\times\Gr(2,4)^4)_{\rm st}/\!/\PGL_4,\\
      \QM{2}&=(\Gr(1,3)^5\times\Gr(2,3))_{\rm st}/\!/\PGL_3,\\
      \QM{3}&=(\Gr(1,3)^4\times\Gr(2,3)^2)_{\rm st}/\!/\PGL_3,\\
      \QM{4}&=(\Gr(1,2)^7)_{\rm st}/\!/\PGL_2
    \end{aligned}
  \end{equation}
  with stability as in \eqref{equation:stability},
  which are rational, of pure Hodge--Tate type, infinitesimally rigid,
  and admit no vector fields.
  Alternatively,
  they are constructed as quiver moduli
  using the subspace quivers and dimension vectors in \cref{table:quivers},
  for the canonical stability parameter.
  Some of their numerical invariants, including the Chern numbers, are surveyed in \cref{table:overview}.

  Moreover:
  \begin{enumerate}
    \item The variety \QM{1} admits a surjective morphism to~$\mathbb{P}^2$,
      for which it is an involution surface bundle of mild degeneration in the sense of \cite{MR4159824}.
    \item The variety \QM{2} admits a semismall morphism to~$\Gr(2,4)$.
      In fact, it is isomorphic to the Segre cousin of \cite{MR4739832}.
    \item The variety \QM{3} admits a semismall morphism to~$\mathbb{P}^2\times\mathbb{P}^2$.
    \item The variety \QM{4} admits a (non-flat) morphism to the Segre cubic, with general fibre~$\mathbb{P}^1$.
      In fact, it is isomorphic to the Fano model of~$\Bl_6\mathbb{P}^4$.
  \end{enumerate}
\end{alphatheorem}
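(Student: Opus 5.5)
The plan is to work throughout on the quiver side: subspace quivers with the dimension vectors of \cref{table:quivers} and the canonical stability parameter. Smooth projective fine moduli of dimension four follow once the dimension vector is indivisible and semistability coincides with stability, which I will check numerically for each of the four cases. Identifying the quiver moduli with the GIT quotients $(\prod_i\Gr(d_i,n))_{\rm st}/\!/\PGL_n$ is standard: first quotient by the groups $\GL_{d_i}$ at the source vertices, which replaces the injective maps by their images in $\Gr(d_i,n)$. The canonical stability then becomes the slope condition \eqref{equation:stability} on subspaces $U\subseteq k^n$.

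\textbf{Fano property.} By the known criterion for quiver moduli with canonical stability, $\omega^{-1}$ is (a multiple of) the ample line bundle induced by the canonical character. So the Fano property is automatic once stable equals semistable.

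\textbf{Rationality and Hodge--Tate type.} Rationality comes from the torus-type cell decomposition (or from a Luna slice/normal form for subspace quivers). Pure Hodge--Tate type, and the Betti and Picard numbers, come from Reineke's resolution of the Harder--Narasimhan recursion. This recursion counts points over $\mathbb{F}_q$ and yields a polynomial in $q$. Polynomial count together with smooth projectivity gives purity and $h^{p,q}=0$ for $p\neq q$. Chern numbers come from the Chow ring presentation of quiver moduli (Franzen's tautological presentation, with Atiyah--Bott integration), computed with \quivertools for \cref{table:overview}.

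\textbf{Rigidity and vector fields.} Both should follow from a Teleman-type quantization/Borel--Weil--Bott vanishing for quiver moduli. Writing $T$ for the tangent bundle, we need $H^i(T)=0$ for $i\geq 1$ and $H^0(T)=0$. The tangent bundle is a tautological bundle built from $\mathcal{H}om(U_i,U_j)$ terms, so I would verify the weight inequalities of the vanishing criterion. For $H^0(T)$, I would show that the universal representation is rigid in the appropriate sense, so that $H^0$ equals $\mathrm{End}$ of the generic representation modulo scalars. For subspace quivers with at least $n+2$ subspaces in general position this is trivial, and it also follows from the finiteness of $\Aut$.

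\textbf{The geometric statements.} Each comes from forgetting arrows, that is, projecting to a subproduct of Grassmannians, and taking the induced GIT quotient.
\begin{enumerate}
  \item For \QM{1}, map to $\mathbb{P}^2$ via an invariant of the configuration, namely the lines in $\mathbb{P}^3$ determined by the four planes. The fibres should be identified with involution surfaces.
  \item For \QM{2}, drop a point to obtain a map to configurations related to $\Gr(2,4)$. Since Manivel's Segre cousin is characterized by its description, I would then match it with his construction.
  \item For \QM{3}, there is a similar contraction to $\mathbb{P}^2\times\mathbb{P}^2$, given by the two lines.
  \item For \QM{4}, forgetting one point gives $\mathrm{M}(1^6;2)$, which is the Segre cubic, with generic fibre $\mathbb{P}^1$. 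Semismallness and the fibre analysis come from stratifying by which points collide. The identification with the Fano model of $\Bl_6\mathbb{P}^4$ uses Kapranov-style blowup descriptions.
\end{enumerate}

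I expect the main obstacle to be items 1 and 3, where the target and the fibre structure are new. Here I would first compute the fibres over strata of special configurations explicitly, to establish semismallness and mild degeneration.
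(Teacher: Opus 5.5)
\textbf{Verdict: the proposal has a genuine gap.} The general part of your plan is fine: the GIT description, the Fano property, and Hodge--Tate type via point counts. The gap is in the geometric part, at items 2 and 3, and hence also in the identification with the Segre cousin.

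\textbf{The morphisms in items 2 and 3.} Your uniform mechanism of forgetting arrows and taking the induced quotient of a subproduct of Grassmannians cannot produce these maps. The maps $\QM{2}\to\Gr(2,4)$ and $\QM{3}\to\mathbb{P}^2\times\mathbb{P}^2$ are birational onto fourfolds. Deleting any source vertex from $(1^5,2;3)$ or $(1^4,2^2;3)$ gives one of $(1^4,2;3)$, $(1^5;3)$, $(1^3,2^2;3)$, all with $\langle\vect{e},\vect{e}\rangle=-1$, i.e.\ surfaces. So ``drop a point'' for \QM{2} lands in a surface.

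The missing idea is to vary the stability parameter while keeping the \emph{same} dimension vector. One moves $\Theta_\can$ to a boundary parameter in the closure of its chamber, e.g.\ $\overline{\Theta}_1=\sum_{k=1}^5\vect{i}_k^*+2\vect{i}_6^*-3\vect{j}^*$, checked with \cref{lemma:projection}. This yields a projective morphism $\QM{2}\to\modulispace[\overline{\Theta}_1\semistable]{Q^{(6)},\vect{d}}$. Its fibres ($\mathbb{P}^1$, resp.\ $\mathbb{P}^1\times\mathbb{P}^1$) and the smoothness of the target come from local quivers via \cref{proposition:fibres-projection-wall,proposition:local-quiver}. The target is identified with $\Gr(2,4)$ through the semi-invariants $D_{ij}E_iE_j$ and Zariski's main theorem. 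For \QM{3} the same is done with $D_kE_k^A$, $D_kE_k^B$ and target $\mathbb{P}^2\times\mathbb{P}^2$.

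Your ``given by the two lines'' for \QM{3} is the right rational map: the coordinates of $H_A,H_B$ relative to the frame $v_1,\dots,v_4$. However, three of the $v_k$ can become collinear on the stable locus. Without these semi-invariants you have neither a morphism nor its fibres.

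\textbf{The Segre cousin.} ``Match it with his construction'' is not an argument. The paper compares the two semismall models $p_1$ and $\rho_1$ over $\Gr(2,4)$. It shows both discriminants are five planes of one ruling coming from a projective frame in $\mathbb{P}^3$; for \QM{2} this uses the $\mathrm{S}_5$-symmetry and \cref{lemma:recognition}. The induced birational map is then an isomorphism in codimension one pulling $-\mathrm{K}$ back to $-\mathrm{K}$, and uniqueness of ample models, \cref{proposition:lehn}, finishes.

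\textbf{Further steps that fail as written.}
\begin{itemize}
\item Your formula for $\mathrm{H}^0(\mathrm{T})$, ``End of the generic representation modulo scalars'', vanishes for every stable representation. It would therefore predict no vector fields on any quiver moduli space. This contradicts the $m$-Kronecker quiver ($m\geq 2$) with $\vect{d}=(1,1)$, whose moduli space $\mathbb{P}^{m-1}$ has $\mathrm{H}^0(\mathrm{T})\cong\mathfrak{sl}_m$.
\item Deducing $\mathrm{H}^0(\mathrm{T})=0$ from finiteness of $\Aut$ is circular for \QM{1} and \QM{3}. The correct input is $\mathrm{H}^0(X,\mathrm{T}_X)\cong\mathrm{HH}^1(\mathbb{C}Q)$ under the (ample) stability assumptions, plus Happel's formula. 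That formula gives zero because $Q^{(m)}$ is a tree (\cref{lemma:no-vector-fields}).
\item For the same reason a torus-fixed-point cell decomposition is unavailable for rationality, since no vector fields means no torus action. Your normal-form alternative works, as does Schofield's theorem cited in the paper.
\item For item 1 you need flatness. The paper gets it by miracle flatness, once all fibres over the Luna strata of $\modulispace{2,2,2,2;4}\cong\Sym^2\mathbb{P}^1$ are shown to be surfaces via \cref{proposition:fibres-framed}.
\item For item 4, ``semismall'' is the wrong notion for a morphism of relative dimension one. What is needed is the fibre $\mathbb{P}^2\vee\mathbb{P}^2$ over each of the ten nodes, which also gives non-flatness.
\end{itemize}
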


The computation of the invariants in \cref{table:overview}
uses \cite{quivertools},
and is given in \cite{FanoFourfoldSubspaceQuiverModuli.jl}.

\begin{table}[t!]
  \centering
  \begin{tabular}{cccccccc}
    \toprule
    variety & Betti numbers & $\mathrm{h}^0(X,\omega_X^\vee)$ & $\mathrm{c}_1^4$ & $\mathrm{c}_2\mathrm{c}_1^2$ & $\mathrm{c}_3\mathrm{c}_1$ & $\mathrm{c}_2^2$ & $\mathrm{c}_4$ \\
    \midrule
    $\QM{1}$ & 1, 5, 13, 5, 1 & 42 & 181 & 130 & 58 & 116 & 25 \\
    $\QM{2}$ & 1, 6, 17, 6, 1 & 40 & 172 & 124 & 58 & 123 & 31 \\
    $\QM{3}$ & 1, 6, 17, 6, 1 & 42 & 182 & 128 & 58 & 121 & 31 \\
    $\QM{4}$ & 1, 7, 22, 7, 1 & 36 & 154 & 112 & 56 & 136 & 38 \\
    \bottomrule
  \end{tabular}
  \caption{Properties of the four Fano quiver moduli from subspace quivers; $\mathrm{c}_i=\mathrm{c}_i(\mathrm{T}_X)$}
  \label{table:overview}
\end{table}

\begin{table}[t]
  \centering
  \begin{tabular}{cccc}
    \toprule
    $\QM{1}$ & $\QM{2}$ & $\QM{3}$ & $\QM{4}$ \\
    \midrule
    \raisebox{-1.2cm}{\msubspacequiverdims[0.7]{5}{1,2,2,2,2}{4}} &
    \raisebox{-1.2cm}{\msubspacequiverdims[0.7]{6}{1,1,1,1,1,2}{3}} &
    \raisebox{-1.2cm}{\msubspacequiverdims[0.7]{6}{1,1,1,1,2,2}{3}} &
    \raisebox{-1.2cm}{\msubspacequiverdims[0.7]{7}{1,1,1,1,1,1,1}{2}} \\
    \bottomrule
  \end{tabular}
  \caption{Subspace quivers and dimension vectors for the varieties in \cref{theorem:main}}
  \label{table:quivers}
\end{table}

We obtain these results by using that these Fano fourfolds are quiver moduli.
The techniques we need are scattered over the quiver moduli literature;
we survey them in \cref{section:techniques},
also to make them more widely known.

We point out that no description of~$\QM{1}$ and~$\QM{4}$
as the zero locus of a general section of a homogeneous vector bundle
on a product of Grassmannians is currently known,
unlike for~$\QM{2}$ and~$\QM{3}$,
see \cref{subsection:segre-cousin} and \cref{remark:enrico}.

\paragraph{Structure of the paper}
In \cref{section:quiver-moduli} we recall the basics on quiver moduli,
and in \cref{section:techniques} we survey various techniques to study quiver moduli,
which we will use in the rest of the paper.
In \cref{section:classification} we classify all Fano fourfolds arising as moduli spaces of stable representations of a subspace quiver,
subject to a natural condition, see \cref{theorem:classification}.
The remaining sections each provide a geometric description of one of the four varieties;
the descriptions from \cref{theorem:main}
are respectively given in
\cref{theorem:involution-surface-bundle},
\cref{theorem:QM2-grassmannian},
\cref{theorem:QM3-fibration},
and \cref{theorem:QM4-segre-cubic}.
The identification of~$\QM{2}$ with the Segre cousin is \cref{theorem:segre-cousin},
whilst the identification of~$\QM{4}$ with the Fano model of~$\Bl_6\mathbb{P}^4$
is due to Bolognesi--Massarenti \cite[\S2.35]{MR4243655},
see \cref{proposition:QM4-fano-model}.

\paragraph{Acknowledgements}
We would like to thank Gianni Petrella
for implementing performance improvements to \quivertools,
which made the computations in the Chow ring of~$\QM{1}$ possible.
We want to thank Christian Lehn for interesting discussions,
which made \cref{subsection:segre-cousin} possible,
and we want to thank Enrico Fatighenti for the suggestion in \cref{remark:enrico}.
LLMs were used to pin down the configuration of points in \cref{subsection:segre-cousin}.
P.B.~was partially supported by NWO (\href{https://doi.org/10.61686/RZKLF82806}{\texttt{doi:10.61686/RZKLF82806}}).
This work was done during the residency of M.R.~at Utrecht University
as the 2025--2026 Springer Visiting Professor,
and we gratefully acknowledge the support of the Utrecht Geometry Centre.

\section{Quiver moduli}
\label{section:quiver-moduli}
In this section we will recall some basics on quiver moduli,
see \cite{MR2484736,MR3727119} for some background.

\subsection{Quiver notation}
Let~$Q$ be a quiver, that is,
a finite oriented multigraph with set of vertices~$Q_0$ and set of arrows~$Q_1$,
where arrows are written~$\alpha\colon i\rightarrow j$,
and we write~$\operatorname{s}(\alpha)=i$ for the source
and~$\operatorname{t}(\alpha)=j$ for the target of an arrow.
Let~$\mathbb{Q}Q_0$ be the~$\mathbb{Q}$-vector space with basis elements~$\vect{i}$ for~$i\in Q_0$,
let~$\mathbb{Q}Q_0^\vee$ be its linear dual,
let~$\mathbb{Z}Q_0$ be the~$\mathbb{Z}$-lattice generated by the~$\vect{i}$,
and let~$\mathbb{N}Q_0$ be the cone generated by the~$\vect{i}$.
Elements of~$\mathbb{Q}Q_0$ are written~$\vect{d}=\sum_{i\in Q_0}d_i\vect{i}$ for~$d_i\in\mathbb{Q}$.
We define the support of an element~$\vect{d}\in\mathbb{N}Q_0$ as~$\supp(\vect{d})=\{i\in Q_0\mid d_i>0\}$.

On~$\mathbb{Q}Q_0$, we consider the (in general non-symmetric) Euler form~$\langle\_,\_\rangle$ given by
\begin{equation}
  \langle\vect{d},\vect{e}\rangle=\sum_{i\in Q_0}d_ie_i-\sum_{\alpha\colon i\rightarrow j}d_ie_j,
\end{equation}
and we denote its symmetrization by~$(\vect{d},\vect{e})=\langle\vect{d},\vect{e}\rangle+\langle\vect{e},\vect{d}\rangle$.
We also consider the standard bilinear form~$\vect{d}\cdot\vect{e}=\sum_{i\in Q_0}d_ie_i$.

We define the \emph{fundamental domain}~$\mathrm{F}_Q\subset\mathbb{N}Q_0$ of~$Q$
as the set of all~$\vect{d}\not=0$ with connected support~$\supp(\vect{d})$
such that~$(\vect{d},\vect{i})\leq 0$ for all~$i\in Q_0$.
This is empty if~$Q$ is of Dynkin type,
a single ray~$\mathbb{N}_{>0}\vect{d}_0$ if~$Q$ is of extended Dynkin type,
and a convex cone (without vertex) in~$\mathbb{N}Q_0$ otherwise.

We define reflections~$\mathrm{s}_i$ on~$\mathbb{Z}Q_0$ for~$i\in Q_0$ by
\begin{equation}
  \mathrm{s}_i(\vect{d})=\vect{d}-(\vect{d},\vect{i})\vect{i}.
\end{equation}

Throughout this paper, we will work over the base field~$\mathbb{C}$ of complex numbers.
We consider finite-dimensional~$\mathbb{C}$-representations~$V$ of the quiver~$Q$
given by~$\mathbb{C}$-vector spaces~$V_i$ for~$i\in Q_0$,
and~$\mathbb{C}$-linear maps~$V(\alpha)\colon V_i\rightarrow V_j$ for~$(\alpha\colon i\rightarrow j)\in Q_1$.
The dimension vector~$\dimvect(V)\in\mathbb{N}Q_0$ is given as~$\dimvect(V)=\sum_{i\in Q_0}\dim_\mathbb{C}V_i\vect{i}$.

\subsection{Quiver moduli}
For a given dimension vector~$\vect{d}\in\mathbb{N}Q_0$
and fixed vector spaces~$V_i$ of dimension~$d_i$ for~$i\in Q_0$,
we consider the space
\begin{equation}
  \repspace{Q,\vect{d}}=\bigoplus_{\alpha\colon i\rightarrow j}\Hom_\mathbb{C}(V_i,V_j)
\end{equation}
parametrizing representations of~$Q$ on the~$V_i$, on which the group
\begin{equation}
  \GL_{\vect{d}}=\prod_{i\in Q_0}\GL(V_i)
\end{equation}
acts by change of basis
\begin{equation}
  (g_i)_i\cdot(V_\alpha)_\alpha=(g_jV_\alpha g_i^{-1})_{\alpha:i\rightarrow j},
\end{equation}
such that the orbits under this action naturally correspond to
isomorphism classes of representations of~$Q$ of dimension vector~$\vect{d}$.

We denote by~$\Stab(\vect{d})\subset\mathbb{Q}Q_0^\vee$
the hyperplane of all~$\Theta$ annihilating~$\vect{d}$,
and refer to it as the space of stability parameters for~$\vect{d}$.
Given such a stability parameter~$\Theta$, we call a representation~$V$ as before~$\Theta$-semistable
(resp.~$\Theta$-stable)
if~$\Theta(\dimvect(U))\leq 0$
(resp.~$<0$)
for all non-zero proper subrepresentations~$U$ of~$V$.
We call~$V$~$\Theta$-polystable if it is isomorphic to a direct sum~$V=\bigoplus_{k=1}^s{U_k}^{m_k}$
of~$\Theta$-stable representations~$U_k$ (in particular,~$\Theta(\dimvect(U_k))=0$).

The~$\GL_{\vect{d}}$-orbit of a representation~$V$
in the open locus~$\repspace[\Theta\semistable]{Q,\vect{d}}\subset \repspace{Q,\vect{d}}$
of~$\Theta$-semistable representations is closed
if and only if~$V$ is~$\Theta$-polystable.
Thus, by Geometric Invariant Theory,
there exists a complex algebraic variety~$\modulispace[\Theta\semistable]{Q,\vect{d}}$
parametrizing isomorphism classes of~$\Theta$-polystable representations of~$Q$ of dimension vector~$\vect{d}$.
It contains a smooth irreducible open subvariety~$\modulispace[\Theta\stable]{Q,\vect{d}}$
parametrizing the~$\GL_{\vect{d}}$-orbits
in the locus~$\repspace[\Theta\stable]{Q,\vect{d}}\subset\repspace[\Theta\semistable]{Q,\vect{d}}$ of~$\Theta$-stable representations,
which has dimension~$1-\langle\vect{d},\vect{d}\rangle$ if non-empty.

For two dimension vectors~$\vect{e}\leq\vect{d}$,
we write~$\vect{e}\hookrightarrow\vect{d}$ if every representation of dimension vector~$\vect{d}$
contains a subrepresentation of dimension vector~$\vect{e}$,
following Schofield \cite{MR1162487}.
If this is not the case, the set of representations~$V\in\repspace{Q,\vect{d}}$
admitting a subrepresentation of dimension vector~$\vect{e}$ is a proper closed subset.
Consequently, we find that~$\repspace[\Theta\semistable]{Q,\vect{d}}$,
and thus,~$\modulispace[\Theta\semistable]{Q,\vect{d}}$,
is non-empty if and only if~$\Theta(\vect{e})\leq 0$ for all~$\vect{e}\hookrightarrow\vect{d}$.
The latter can be decided recursively,
since~$\vect{e}\hookrightarrow\vect{d}$
if and only if~$\langle\vect{e}',\vect{d}-\vect{e}\rangle\geq 0$ for all~$\vect{e}'\hookrightarrow\vect{e}$.
Dually, we define~$\vect{d}\twoheadrightarrow\vect{f}$
if every representation of dimension vector~$\vect{d}$ admits a factor representation of dimension vector~$\vect{f}$.
For~$\vect{d}=\vect{e}+\vect{f}$, we have~$\vect{d}\twoheadrightarrow\vect{f}$ if and only if~$\vect{e}\hookrightarrow\vect{d}$.

The moduli space~$\modulispace[\Theta\semistable]{Q,\vect{d}}$ equals the Proj of
the~$\mathbb{N}$-graded ring~$\mathbb{C}[\repspace{Q,\vect{d}}]^{\GL_{\vect{d}}}_\Theta$
of polynomial functions~$f\in\mathbb{C}[\repspace{Q,\vect{d}}]$ on~$\repspace{Q,\vect{d}}$
which are~$\GL_{\vect{d}}$-semi-invariant of weight~$C\cdot\Theta$,
for~$C\in\mathbb{N}$, in the following sense:
\begin{equation}
  f((g_i)\cdot V)=\left( \prod_{i\in Q_0}\det(g_i)^{-\Theta(\vect{i})} \right)^C\cdot f(V).
\end{equation}

In particular, for~$\Theta=0$ we find that~$\modulispace[0\semistable]{Q,\vect{d}}$
parametrizes semisimple representations of~$Q$ of dimension vector~$\vect{d}$.
It is a cone induced by the dilation action on~$\repspace{Q,\vect{d}}$ with vertex~$0$.
In case~$Q$ is an acyclic quiver (that is, it does not admit oriented cycles),~$\modulispace[0\semistable]{Q,\vect{d}}$
reduces to a point~$\{0\}$.
The coordinate ring of~$\modulispace[0\semistable]{Q,\vect{d}}$
is the ring~$\mathbb{C}[\repspace{Q,\vect{d}}]^{\GL_{\vect{d}}}$
of~$\GL_{\vect{d}}$-invariant polynomial functions on~$\repspace{Q,\vect{d}}$.
It is the degree-zero part of~$\mathbb{C}[\repspace{Q,\vect{d}}]^{\GL_{\vect{d}}}_\Theta$,
thus there exists a natural projective map
\begin{equation}
  \pi\colon\modulispace[\Theta\semistable]{Q,\vect{d}}\rightarrow\modulispace[0\semistable]{Q,\vect{d}}.
\end{equation}
In particular, if~$Q$ is acyclic,~$\modulispace[\Theta\semistable]{Q,\vect{d}}$
is a projective variety.
In general, we denote by
\begin{equation}
  \modulispace[\Theta\semistable,\nilpotent]{Q,\vect{d}}=\pi^{-1}(0)
\end{equation}
the zero fibre, which is thus a projective variety,
parametrizing isomorphism classes of~$\Theta$-polystable representations~$V$
which are \emph{nilpotent} in the sense that all oriented cycles in~$Q$
are represented by nilpotent compositions of the~$V_\alpha$ in~$V$.

\paragraph{Wall-and-chamber structure}
For a proper non-zero dimension vector~$\vect{e}\leq\vect{d}$, we consider the hyperplane
\begin{equation}
  \mathrm{W}_{\vect{e}}=\{\Theta\in\Stab(\vect{d})\mid \Theta(\vect{e})=0\}.
\end{equation}
We call~$\mathrm{W}_{\vect{e}}$ a proper wall in~$\Stab(\vect{d})$
if, for every~$\Theta\in\mathrm{W}_{\vect{e}}$,
there exist~$\Theta$-semistable representations~$W$ and~$W'$
of dimension vector~$\vect{e}$ and~$\vect{d}-\vect{e}$,
respectively (in this case,~$W\oplus W'$ is a~$\Theta$-semistable non-stable representation of dimension vector~$\vect{d}$).
Inside~$\mathbb{R}\otimes_\mathbb{Q}\Stab(\vect{d})$,
we consider the real hyperplane arrangement given by the~$\mathbb{R}\otimes_\mathbb{Q}\mathrm{W}_{\vect{e}}$
for~$\mathrm{W}_{\vect{e}}$ a proper wall.
The intersection of the chambers of this arrangement
(that is, the connected components of the complement of all hyperplanes)
with~$\Stab(\vect{d})$ are called the chambers of~$\Stab(\vect{d})$.
If two stability parameters belong to the same chamber, the resulting semistable loci in~$\repspace{Q,\vect{d}}$ are identical,
thus their respective moduli spaces are isomorphic.

If~$\Theta\in\Stab(\vect{d})$ does not belong to a proper wall,
then there is an equality~$\repspace[\Theta\semistable]{Q,\vect{d}}=\repspace[\Theta\stable]{Q,\vect{d}}$,
and, consequently,~$\modulispace[\Theta\semistable]{Q,\vect{d}}=\modulispace[\Theta\stable]{Q,\vect{d}}$
is an irreducible smooth variety of dimension~$1-\langle\vect{d},\vect{d}\rangle$.
For more on the wall-and-chamber decomposition for quiver moduli,
see \cite{MR5007902}.

\paragraph{Invariants of quiver moduli}
The explicit combinatorial nature of quiver moduli and their origin in linear algebra
makes them particularly interesting and tractable varieties to compute invariants of.
Beyond the use of local quivers to study singularities and fibres,
we will also freely use the following results on determining invariants for quiver moduli:
\begin{itemize}
  \item Betti numbers \cite{MR1974891};
  \item Chow rings \cite{MR3318266,MR4770368};
  \item the cohomology of the tangent bundle \cite{MR4883104,MR4954467};
  \item the Picard group \cite{MR4352662};
  \item the position of~$\Theta$ in the wall-and-chamber decomposition \cite{MR5007902}.
\end{itemize}
These (and other) invariants can be computed using \textsc{QuiverTools} \cite{quivertools,MR5076554}.

\paragraph{Fano quiver moduli}
Our main interest in this article is quiver moduli which happen to be Fano.
For this, we recall some notions from \cite{MR4352662,MR4954467}.
We say that~$\vect{d}$ is \emph{strongly amply stable}
for the stability parameter~$\Theta$ if
\begin{equation}
  \langle\vect{e},\vect{d}-\vect{e}\rangle\leq -2
\end{equation}
for all proper non-zero~$\vect{e}\leq\vect{d}$
such that~$\Theta(\vect{e})>0$.
We say that~$\vect{d}$ is \emph{amply stable}
for the stability parameter~$\Theta$
if the complement of~$\repspace[\Theta\semistable]{Q,\vect{d}}$ in~$\repspace{Q,\vect{d}}$
has codimension at least two.
Strong ample stability is a sufficient numerical criterion
for ample stability,
which is easier to check in practice,
as it does not require understanding the Harder--Narasimhan stratification.

We also have a preferred choice of stability parameter for a given dimension vector~$\vect{d}$,
namely the one given by
\begin{equation}
  \Theta_\can=\langle\vect{d},\_\rangle-\langle\_,\vect{d}\rangle
\end{equation}
which we call the \emph{canonical stability parameter}.

Under these conditions,
the following theorem summarizes several results in the literature.

\begin{theorem}
  \label{theorem:fano-quiver-moduli}
  Assume the following three conditions:
  \begin{itemize}
    \item $Q$ is acyclic,
    \item $\Theta_\can$ does not belong to a proper wall,
    \item and~$\vect{d}$ is~$\Theta_\can$-amply stable.
  \end{itemize}
  Then~$\modulispace[\Theta_\can\semistable]{Q,\vect{d}}$ is
  a smooth projective Fano variety of dimension~$1-\langle\vect{d},\vect{d}\rangle$,
  Picard rank~$|\supp(\vect{d})|-1$
  and index~$\gcd(\Theta_\can)=\gcd\{\Theta_i\mid i\in Q_0\}$.
  It is rational, of pure Hodge--Tate type,
  its space of global vector fields is isomorphic to~$\mathrm{HH}^1(\mathbb{C}Q)$,
  and it is infinitesimally rigid.
\end{theorem}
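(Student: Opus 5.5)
This theorem collects results from the literature, so the plan is to assemble the individual statements and check that their hypotheses are covered by the three assumptions; the properties are then verified one at a time.

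\emph{Smoothness, projectivity, dimension.} Since $\Theta_\can$ lies on no proper wall, the discussion of the wall-and-chamber structure gives $\repspace[\Theta_\can\semistable]{Q,\vect{d}}=\repspace[\Theta_\can\stable]{Q,\vect{d}}$. Hence $\modulispace[\Theta_\can\semistable]{Q,\vect{d}}=\modulispace[\Theta_\can\stable]{Q,\vect{d}}$ is smooth and irreducible of dimension $1-\langle\vect{d},\vect{d}\rangle$. Acyclicity makes $\modulispace[0\semistable]{Q,\vect{d}}$ a point, so the moduli space is projective.

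\emph{Picard group and Fano property.} Ample stability says the unstable locus has codimension at least two in the affine space $\repspace{Q,\vect{d}}$. Therefore
\[
\operatorname{Pic}\bigl(\modulispace[\Theta_\can\stable]{Q,\vect{d}}\bigr)\cong\operatorname{Pic}^{\GL_{\vect{d}}/\Gm}\bigl(\repspace[\Theta_\can\stable]{Q,\vect{d}}\bigr).
\]
This is the character group of $\GL_{\vect{d}}$ annihilating the diagonal $\Gm$, namely characters $\chi$ with $\chi(\vect{d})=0$. It is a free group of rank $|\supp(\vect{d})|-1$; this is \cite{MR4352662}.

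For the anticanonical class, compute $\det$ of the tangent bundle on the stable locus from the sequence $0\to\mathfrak{gl}_{\vect{d}}/\mathbb{C}\to\mathrm{T}\repspace{}\to\mathrm{T}\modulispace{}\to0$. The adjoint term contributes trivially, while the representation space contributes the character $\prod_{\alpha:i\to j}\det(g_j)^{d_i}\det(g_i)^{-d_j}$. Identified as a functional, this character is exactly $\Theta_\can$, up to the sign convention. Under the GIT correspondence it is therefore the ample linearization defining the quotient, so $\omega^\vee$ is ample. The index equals the divisibility of $\Theta_\can$ in the character lattice, which is $\gcd(\Theta_\can)$ once one checks that the lattice is saturated.

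\emph{Rationality and Hodge theory.} Rationality follows from the known rationality of stable quiver moduli for coprime (or fine) situations. One can also argue via Schofield's results on birational types, for example by reducing to a framed quiver setting. Pure Hodge--Tate type follows from the Harder--Narasimhan stratification and Reineke's counting: the point count is a polynomial in $q$ \cite{MR1974891}, the variety is smooth and projective, and the cohomology is generated by Chern classes of tautological bundles \cite{MR3318266}.

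\emph{Vector fields and rigidity.} Use the computation of $\mathrm{H}^*(\mathrm{T})$ from \cite{MR4883104,MR4954467}. Under ample stability and acyclicity, Teleman quantization shows that the higher cohomology of $\mathrm{T}$ vanishes, giving infinitesimal rigidity. It also identifies $\mathrm{H}^0(\mathrm{T})$ with the $\GL_{\vect{d}}$-invariant part of the complex $\mathfrak{gl}_{\vect{d}}\to\repspace{Q,\vect{d}}\otimes\mathcal{O}$, which is $\mathrm{HH}^1(\mathbb{C}Q)$.

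\emph{Main obstacle.} The vector-field and rigidity statement is the delicate step. It needs the weights of the Harder--Narasimhan strata to satisfy Teleman's bounds, and strong ample stability, resp.\ ample stability, is exactly the hypothesis under which the cited works establish this. The plan is to verify that their hypotheses match the three bullets and to invoke them directly.
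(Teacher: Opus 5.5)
Your overall plan is the paper's. The theorem is a compilation, and the paper's proof consists only of citations:
\begin{itemize}
\item \cite[Theorem~4.2]{MR4352662} for the Fano property, Picard rank and index. Your sketch via equivariant Picard groups of the stable locus and the determinant of $\repspace{Q,\vect{d}}$ is essentially that argument.
\item Schofield \cite[Theorem~6.4]{MR1914089} for rationality.
\item King--Walter \cite[Theorem~3]{MR1324213} for the Hodge--Tate property.
\item \cite[Theorem~B]{MR4883104} with Happel \cite{MR1035222} for vector fields, and \cite[Corollary~D]{MR4954467} for rigidity.
\end{itemize}
Your route to pure Hodge--Tate type, via a polynomial point count on a smooth projective variety, is a valid alternative to King--Walter and needs no universal family.

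There is a gap in the hypothesis check for the last two claims. You flag it as the main obstacle but then settle it incorrectly. The Teleman-quantization results of \cite{MR4883104,MR4954467} are proved under \emph{strong} ample stability, i.e.\ $\langle\vect{e},\vect{d}-\vect{e}\rangle\leq-2$ whenever $\Theta_\can(\vect{e})>0$. Alternatively they hold under the weight inequality for every non-trivial Harder--Narasimhan type from \cite[Remark~1.3]{MR4954467}. Neither paper works with plain ample stability. The third bullet only says that the unstable locus has codimension at least two, which is weaker, so you cannot invoke these theorems directly. The missing step is that codimension at least two of each Harder--Narasimhan stratum already forces the weight bound that Teleman quantization needs for the terms of the tangent complex. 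The paper gets this by citing \cite{ample-stability}, which replaces strong ample stability by ample stability. You must either cite that or prove the implication.

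A smaller omission of the same kind concerns rationality. The coprime or fine situation you appeal to is not one of the three bullets. It can be derived from them, but you need to say so. If $\vect{d}=m\vect{e}$ with $m\geq2$ carried $\Theta_\can$-stable representations, then $\vect{e}$ would be a Schur root. The direct sum of $m$ copies of a $\Theta_\can$-stable representation of dimension vector $\vect{e}$ would then be strictly semistable of dimension vector $\vect{d}$, contradicting the second bullet.
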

The Fano property, Picard rank, and index are~\cite[Theorem~4.2]{MR4352662}.
Rationality is~\cite[Theorem~6.4]{MR1914089}.
The pure Hodge--Tate type is~\cite[Theorem~3]{MR1324213}.
The global vector fields isomorphism is~\cite[Theorem~B]{MR4883104},
whilst Happel's computation of~$\mathrm{HH}^1$ for path algebras is in~\cite{MR1035222}.
Infinitesimal rigidity corresponds to~\cite[Corollary~D]{MR4954467}.

Note that the standing assumption in \cite{MR4883104,MR4954467}
is that of \emph{strong} ample stability
(or as explained in \cite[Remark~1.3]{MR4954467},
it suffices that a certain inequality holds
for every non-trivial Harder--Narasimhan type).
This condition is replaced by the weaker and easier condition
of ample stability in \cite{ample-stability}.

\section{A toolkit for quiver moduli}
\label{section:techniques}
We will use several techniques to study quiver moduli,
whose statements we now recall.

\subsection{Identifications between quiver moduli}
There exist various identifications between quiver moduli,
induced by operations on the quiver, or the dimension vector.

\paragraph{Linear duality}
The first identification is standard.
Let~$Q^{\rm op}$ be the opposite quiver to~$Q$ (that is, reverse all arrows).
\begin{lemma}
  \label{lemma:linear-duality}
  There exists an isomorphism
  \begin{equation}
    \modulispace[\Theta\stableorsemistable]{Q,\vect{d}}
    \cong
    \modulispace[(-\Theta)\stableorsemistable]{Q^{\rm op},\vect{d}},
  \end{equation}
  given by dualizing all maps representing the arrows in a quiver representation.
\end{lemma}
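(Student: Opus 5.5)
The plan is to define the duality functor explicitly, check that it transforms the group action and stability condition compatibly, and then pass to GIT quotients. Given a representation $V=(V_i,V_\alpha)$ of $Q$ with dimension vector $\vect{d}$, let $V^*$ be the representation of $Q^{\rm op}$ with spaces $V_i^*$ and maps $V_\alpha^*\colon V_j^*\to V_i^*$ for each $\alpha\colon i\to j$. This gives a contravariant exact equivalence between finite-dimensional representations of $Q$ and of $Q^{\rm op}$, and it preserves dimension vectors. On representation spaces it is the linear isomorphism
\begin{equation}
  \repspace{Q,\vect{d}}\to\repspace{Q^{\rm op},\vect{d}},\qquad (V_\alpha)_\alpha\mapsto(V_\alpha^{\rm t})_\alpha,
\end{equation}
once we fix identifications $V_i\cong V_i^*$ via bases. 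This map is equivariant for the automorphism $g\mapsto (g^{\rm t})^{-1}$ of $\GL_{\vect{d}}$, since $(g_jV_\alpha g_i^{-1})^{\rm t}=(g_i^{\rm t})^{-1}\,V_\alpha^{\rm t}\,g_j^{\rm t}$. Consequently orbits correspond to orbits.

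The next step is to match the stability conditions. Since duality is exact and contravariant, subrepresentations $U\subseteq V$ correspond bijectively to quotients $V^*\twoheadrightarrow U^*$. Equivalently, the subrepresentations of $V^*$ are exactly the $(V/U)^*$, of dimension vector $\vect{d}-\dimvect U$. Because $\Theta(\vect{d})=0$, we have
\begin{equation}
  (-\Theta)(\vect{d}-\dimvect U)=\Theta(\dimvect U).
\end{equation}
Hence $V$ is $\Theta$-(semi)stable if and only if $V^*$ is $(-\Theta)$-(semi)stable. The same argument shows that polystability is preserved, since duality commutes with direct sums.

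Finally we pass to the quotients. Pulling back along the linear isomorphism identifies semi-invariant functions for the character $\chi_\Theta$, namely $g\mapsto\prod_i\det(g_i)^{-\Theta(\vect{i})}$, on $\repspace{Q^{\rm op},\vect{d}}$. Twisting by $g\mapsto(g^{\rm t})^{-1}$ turns $\chi_\Theta$ into $\chi_{-\Theta}$, because $\det((g_i^{\rm t})^{-1})=\det(g_i)^{-1}$. So the graded rings $\mathbb{C}[\repspace{Q,\vect{d}}]^{\GL_{\vect{d}}}_{\Theta}$ and $\mathbb{C}[\repspace{Q^{\rm op},\vect{d}}]^{\GL_{\vect{d}}}_{-\Theta}$ are isomorphic, and taking Proj gives the isomorphism of semistable moduli. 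It restricts to the stable loci by the previous step. The only point needing care is the sign bookkeeping of the character, to confirm that the twist really produces $-\Theta$ rather than $\Theta$. Everything else is formal.
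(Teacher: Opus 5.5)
Your proposal is correct. The three verifications you give are exactly what is needed: the equivariance of $V\mapsto V^{\mathrm t}$ along $g\mapsto(g^{\mathrm t})^{-1}$; the correspondence $U\mapsto(V/U)^*$ between subrepresentations of $V$ and of $V^*$, together with $(-\Theta)(\vect{d}-\dimvect U)=\Theta(\dimvect U)$; and the sign check $\chi_\Theta((g^{\mathrm t})^{-1})=\chi_{-\Theta}(g)$ on semi-invariants.

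The paper gives no proof, only calling the identification standard and saying it is induced by dualizing the arrow maps. Your argument is the standard verification behind that remark.
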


\paragraph{Admissible reflections}
Assume that~$i\in Q_0$ is a sink in~$Q$,
that~$\langle\vect{d},\vect{i}\rangle\leq 0$,
and that~$\Theta_i<0$.
Define the reflected quiver~$\mathrm{s}_i(Q)$ by reversing all arrows pointing to~$i$,
consider the dimension vector~$\mathrm{s}_i(\vect{d})$ for~$\mathrm{s}_i(Q)$,
and define the reflected stability parameter~$\mathrm{s}_i(\Theta)$ by
\begin{equation}
  \mathrm{s}_i(\Theta)=\langle \mathrm{s}_i(\alpha),\_\rangle_{\mathrm{s}_i(Q)}\mbox{ if }\Theta=\langle\alpha,\_\rangle_Q.
\end{equation}
Then we have the following,
see \cite[Theorem~3.2]{MR5005835},
or \cite[\S2.4]{10.4171/JCA/97}.
\begin{proposition}
  There exists an isomorphism
  \begin{equation}
    \modulispace[(\mathrm{s}_i(\Theta))\semistable]{\mathrm{s}_i(Q),\mathrm{s}_i(\vect{d})}
    \cong
    \modulispace[\Theta\semistable]{Q,\vect{d}}.
  \end{equation}
\end{proposition}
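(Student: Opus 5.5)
The proof will follow the classical BGP reflection functor argument, carried out in families. Write $Q'=\mathrm{s}_i(Q)$, $\vect{d}'=\mathrm{s}_i(\vect{d})$ and $\Theta'=\mathrm{s}_i(\Theta)$. In $Q'$ the vertex~$i$ is a source.

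\textbf{Step 1: from $Q$ to $Q'$.} For a representation $V$ of $Q$, consider the map
\begin{equation}
  \phi_V\colon \bigoplus_{\alpha\colon j\to i}V_j\to V_i .
\end{equation}
The first task is to show that, since $\Theta_i<0$, every $\Theta$-semistable $V$ has $\phi_V$ surjective. If $\phi_V$ were not surjective, its image together with the $V_j$ for $j\neq i$ would give a subrepresentation with dimension vector $\vect{d}-c\vect{i}$ for some $c>0$. Its $\Theta$-value is $-c\Theta_i>0$, which contradicts semistability.

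Given surjectivity, the reflection functor $S_i^+$ replaces $V_i$ by $\ker\phi_V$, which has dimension
\begin{equation}
  \sum_{\alpha\colon j\to i} d_j - d_i = d_i-(\vect{d},\vect{i}) = \mathrm{s}_i(\vect{d})_i .
\end{equation}
The arrows at~$i$ are replaced by the inclusion of $\ker\phi_V$ followed by the projections. The hypothesis $\langle\vect{d},\vect{i}\rangle\le 0$ makes this dimension nonnegative, and it ensures that generic $\phi_V$ is surjective.

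\textbf{Step 2: $S_i^+$ is an equivalence.} By BGP, $S_i^+$ is an equivalence between two full subcategories: representations of $Q$ with $\phi_V$ surjective, and representations of $Q'$ whose dual map $\psi_W\colon W_i\to\bigoplus W_j$ is injective. Its inverse is $S_i^-$ (cokernel). Both subcategories are closed under subobjects and quotients in the relevant sense, and $S_i^+$ sends subrepresentations of such objects to subrepresentations, up to the simple $S_i$ summands that the functor kills.

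\textbf{Step 3: matching stability.} Here I must check that $\Theta'$ is defined so that $\Theta'(\dimvect S_i^+U)=\Theta(\dimvect U)$ for every $U$ with $\phi_U$ surjective. This follows from the identity $\langle \mathrm{s}_i\vect{a},\mathrm{s}_i\vect{b}\rangle_{Q'}=\langle \vect{a},\vect{b}\rangle_Q$, applied to the form $\Theta=\langle\alpha,\_\rangle_Q$.

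A subtlety arises with subrepresentations $U\subset V$ whose map $\phi_U$ is not surjective. Let $\bar U$ be the subrepresentation obtained by enlarging $U_i$ to the image of $\phi_U$ (or shrinking it appropriately). Then $\Theta(\dimvect U)\le\Theta(\dimvect \bar U)$ because $\Theta_i<0$, so it suffices to test semistability on subrepresentations with surjective~$\phi$. A symmetric argument on $Q'$ uses that $\Theta'_i=-\Theta_i>0$: subrepresentations of $W$ can be replaced by ones with injective~$\psi$. This shows that $S_i^+$ restricts to a bijection between $\Theta$-semistable and $\Theta'$-semistable representations, preserving S-equivalence and polystability.

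\textbf{Step 4: promoting to an isomorphism of moduli spaces.} I would do this via stacks or a direct GIT argument. The locus $\repspace{Q,\vect{d}}^{\mathrm{surj}}$ is a principal bundle over the relative Grassmannian of kernels. Concretely, both $\repspace[\Theta\semistable]{Q,\vect{d}}/\GL(V_i)$ and $\repspace[\Theta'\semistable]{Q',\vect{d}'}/\GL(V'_i)$ are identified with the same open subset of $\Gr\bigl(d'_i,\bigoplus_j V_j\bigr)\times\prod_{\beta\not\ni i}\Hom$, on which $\prod_{j\ne i}\GL(V_j)$ acts. It then remains to compare linearizations, i.e.\ to identify the semi-invariants of weight $\Theta$ with those of weight $\Theta'$, for example via the Plücker/determinantal correspondence between maximal minors of the surjection and of the kernel inclusion.

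I expect this last matching of linearizations, together with the careful handling of the closed orbits, to be the main technical point. The categorical bijection itself is standard.
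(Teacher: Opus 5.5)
The paper gives no proof of its own here. It cites \cite[Theorem~3.2]{MR5005835} (see also \cite[\S2.4]{10.4171/JCA/97}), and its later use of the result (the isomorphism matches polystables, with summand dimension vectors transformed by~$\mathrm{s}_i$) indicates that the cited isomorphism is induced by the BGP reflection functor. So your strategy is the standard one. Steps~1--2 and the $Q$-side half of Step~3 are fine, with ``enlarging'' read as shrinking $U_i$ to $\operatorname{im}\phi_U\subseteq U_i$. Two points, however, do not go through as written.

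First, the ``symmetric argument'' on~$Q'$ is misstated. Every subrepresentation $W'\subseteq W=S_i^+V$ already has $\psi_{W'}$ injective, being a restriction of~$\psi_W$, so replacing $W'$ by one with injective~$\psi$ achieves nothing. The real obstruction is that $S_i^-W'\to S_i^-W=V$ need not be injective, so $W'$ need not come from a subrepresentation of~$V$. What you need is saturation. Enlarge $W'_i$ to $\psi_W^{-1}\bigl(\bigoplus_{\alpha\colon j\to i}W'_j\bigr)$; this can only increase~$\Theta'$, because $\Theta'_i=-\Theta_i>0$. The saturated subrepresentations are exactly the $S_i^+U$ for $U\subseteq V$ with $U_j=W'_j$ ($j\neq i$) and $U_i=\phi_V\bigl(\bigoplus_{\alpha\colon j\to i}W'_j\bigr)$, and this $U$ has $\phi_U$ surjective. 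This is precisely what shows that $S_i^+V$ is $\Theta'$-semistable when $V$ is $\Theta$-semistable; your shrinking argument gives the converse. Relatedly, in Step~2 the surjective-$\phi$ subcategory is closed only under quotients and the injective-$\psi$ one only under subobjects, which is why both reductions are needed.

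Second, Step~4 is left open, and the difficulty you anticipate is not the real one: no comparison of linearizations or closed orbits is needed. Let $Z^{\mathrm{sst}}\subseteq\Gr\bigl(d'_i,\bigoplus_{\alpha\colon j\to i}V_j\bigr)\times R_{\mathrm{rest}}$ be the common image of the two semistable loci, which agree by Step~3. Here $R_{\mathrm{rest}}$ collects the arrows not incident to~$i$, and $G'=\prod_{j\neq i}\GL(V_j)$ acts by $K\mapsto gK$ in both descriptions.
\begin{itemize}
  \item The map $\repspace[\Theta\semistable]{Q,\vect{d}}\to\modulispace[\Theta\semistable]{Q,\vect{d}}$ is a good quotient by $\GL_{\vect{d}}=\GL(V_i)\times G'$.
  \item The map $\repspace[\Theta\semistable]{Q,\vect{d}}\to Z^{\mathrm{sst}}$ is a principal $\GL(V_i)$-bundle.
  \item Hence the induced map $Z^{\mathrm{sst}}\to\modulispace[\Theta\semistable]{Q,\vect{d}}$ is a good quotient by~$G'$, since invariant functions, affineness and separation of disjoint closed invariant subsets all pass through the principal bundle.
\end{itemize}
The same argument, with $\GL(V'_i)$ and injective maps, exhibits $\modulispace[\Theta'\semistable]{Q',\vect{d}'}$ as a good quotient of the same $G'$-variety $Z^{\mathrm{sst}}$. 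Good quotients are categorical, hence unique up to unique isomorphism, and this yields the claimed isomorphism, induced on points by~$S_i^+$. Matching semi-invariants of weights $\Theta$ and~$\Theta'$ directly is possible, but it is a harder route to the same conclusion.
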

Using \cref{lemma:linear-duality},
we have an analogous result for~$i\in Q_0$ a source of~$Q$.

We say that two pairs~$(Q,\vect{d})$ and~$(Q',\vect{d}')$ are \emph{related by admissible reflections}
if one can be obtained from the other by a sequence of admissible reflections and dualities.

\begin{remark}
  There is no known identification of moduli spaces for~$i$ being neither sink nor source.
\end{remark}

\subsection{Local quivers}
Quiver moduli are automatically smooth in stable points,
because the global dimension of the category of representations is~1.
At strictly semistable points in~$\modulispace[\Theta\semistable]{Q,\vect{d}}$
one can determine the types of singularities
up to \'etale equivalence using Luna's slice theorem,
leading to so-called local quivers \cite{MR1972892}.

Let~$V=\bigoplus_{k=1}^s{U_k}^{m_k}$ be a~$\Theta$-polystable representation,
and assume that the~$U_k$ are chosen to be pairwise non-isomorphic.
Then we call
\begin{equation}
  \xi=((\dimvect(U_k))^{m_k})_k
\end{equation}
the \emph{polystable type} of~$V$.
The set~$S_\xi\subset\modulispace[\Theta\semistable]{Q,\vect{d}}$ of representations
of fixed polystable type~$\xi$ is locally closed,
and the quotient map
\begin{equation}
  \repspace[\Theta\semistable]{Q,\vect{d}}\rightarrow\modulispace[\Theta\semistable]{Q,\vect{d}}
\end{equation}
is \'etale-locally trivial over~$S_\xi$.
We call~$S_\xi$ the \emph{Luna stratum} in~$\modulispace[\Theta\semistable]{Q,\vect{d}}$
associated to the decomposition type~$\xi$.

We then define the \emph{local quiver}~$Q_\xi$ for~$\xi$
as the quiver with vertices~$i_1,\ldots,i_s$,
and with the number of arrows from~$i_k$ to~$i_l$ given by
\begin{equation}
  \delta_{k,l}-\langle\dimvect(U_k),\dimvect(U_l)\rangle_Q.
\end{equation}
We also define the local dimension vector~$\vect{d}_\xi=\sum_{k=1}^s m_k\vect{i}_k$ for~$Q_\xi$.
The following result holds \cite[Theorem~1.1]{MR1972892}.
\begin{proposition}
  \label{proposition:local-quiver}
  The singularity of~$\modulispace[\Theta\semistable]{Q,\vect{d}}$
  at any point in the Luna stratum~$S_\xi$
  is \'etale equivalent to the singularity of~$0$
  (i.e., the representation with all morphisms zero)
  in the moduli space~$\modulispace[0\semistable]{Q_\xi,\vect{d}_\xi}$.
\end{proposition}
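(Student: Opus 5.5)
The statement is Proposition~\ref{proposition:local-quiver}, Le Bruyn--Procesi's local quiver theorem. I would prove it by applying Luna's étale slice theorem to the reductive group $\GL_{\vect{d}}$ acting on the affine variety $\repspace{Q,\vect{d}}$. The slice normal to the closed orbit of $V$ is then identified with the representation space of $Q_\xi$.

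\textbf{Reduction to the affine case.} Recall that $\modulispace[\Theta\semistable]{Q,\vect{d}}$ is covered by affine opens $\{f\neq0\}$, where $f$ runs over semi-invariants of weight $C\Theta$. Each such open is the categorical quotient $\repspace{Q,\vect{d}}_f/\!/\GL_{\vect{d}}$. So a point of $S_\xi$, represented by a polystable $V$ with closed orbit in $\repspace{Q,\vect{d}}_f$, lies in an affine GIT quotient. The diagonal scalars act trivially, so one may replace $\GL_{\vect{d}}$ by $\GL_{\vect{d}}/\Gm$; this changes nothing below.

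\textbf{Stabilizer.} The stabilizer of $V=\bigoplus U_k^{m_k}$ is $\Aut(V)$. The $U_k$ are $\Theta$-stable of the same slope and pairwise non-isomorphic, so $\Hom(U_k,U_l)=\delta_{kl}\mathbb{C}$ by a Schur-type argument. Hence $\Aut(V)\cong\prod_k\GL_{m_k}=\GL_{\vect{d}_\xi}$, which is reductive, as Luna's theorem requires.

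\textbf{Normal space.} The tangent space to the orbit at $V$ is the image of $\bigoplus_i\mathrm{End}(V_i)\to\repspace{Q,\vect{d}}$. From the standard exact sequence
\[0\to\mathrm{End}(V)\to\textstyle\bigoplus_i\mathrm{End}(V_i)\to\repspace{Q,\vect{d}}\to\mathrm{Ext}^1(V,V)\to0,\]
which uses hereditarity, the normal space is $N_V\cong\mathrm{Ext}^1(V,V)=\bigoplus_{k,l}\mathrm{Ext}^1(U_k,U_l)\otimes\Hom(\mathbb{C}^{m_k},\mathbb{C}^{m_l})$, with its natural $\GL_{\vect{d}_\xi}$-action. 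Next, $\langle\dimvect U_k,\dimvect U_l\rangle=\dim\Hom-\dim\mathrm{Ext}^1=\delta_{kl}-\dim\mathrm{Ext}^1(U_k,U_l)$. So $\dim\mathrm{Ext}^1(U_k,U_l)$ is exactly the number of arrows $k\to l$ in $Q_\xi$. Therefore $N_V\cong\repspace{Q_\xi,\vect{d}_\xi}$ as a $\GL_{\vect{d}_\xi}$-representation, with $V$ corresponding to $0$.

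\textbf{Slice theorem.} Luna's theorem gives an étale morphism $N_V/\!/\GL_{\vect{d}_\xi}\to\repspace{Q,\vect{d}}_f/\!/\GL_{\vect{d}}$ that sends $0$ to $[V]$. The slice is smooth because $\repspace{Q,\vect{d}}$ is an affine space, so étale-locally it is linearizable to $N_V$. The source is $\modulispace[0\semistable]{Q_\xi,\vect{d}_\xi}$, which proves the claim.

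The main obstacle is this last step: Luna's theorem must be applied on the $\Theta$-semistable open $\repspace{Q,\vect{d}}_f$, where $V$ is a closed orbit, rather than on all of $\repspace{Q,\vect{d}}$. One also has to check that the étale slice chart stays inside the semistable locus. For that I would shrink to a saturated affine neighbourhood, which Luna's fundamental lemma provides. The rest is routine.
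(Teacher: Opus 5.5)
Your argument is correct. It is the standard Luna étale slice proof: the polystable $V$ has a closed orbit in the affine open $\repspace{Q,\vect{d}}_f$, its stabilizer is $\Aut(V)\cong\GL_{\vect{d}_\xi}$, and its normal space is $\mathrm{Ext}^1(V,V)\cong\repspace{Q_\xi,\vect{d}_\xi}$. This is how the cited \cite[Theorem~1.1]{MR1972892} is established; the paper gives no proof of its own and simply quotes that result.

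There are two small points. In the smooth case the slice theorem gives a roof of étale maps $N_V/\!/\GL_{\vect{d}_\xi}\leftarrow S/\!/\GL_{\vect{d}_\xi}\rightarrow\repspace{Q,\vect{d}}_f/\!/\GL_{\vect{d}}$ rather than one direct morphism, which is exactly what ``étale equivalent'' requires. Your worry about the slice leaving the semistable locus is unnecessary, since you already apply Luna's theorem on $\repspace{Q,\vect{d}}_f$, which lies inside that locus.
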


\subsection{Framing}
\label{subsection:framing}
Given~$Q$,~$\vect{d}$,
an integral stability parameter~$\Theta\in\Hom(\mathbb{Z}Q_0,\mathbb{Z})$ as before,
and a framing datum~$\vect{n}\in \mathbb{N}Q_0$,
we introduce a new quiver~$\widehat{Q}$
by adding a vertex~$i_0$ to~$Q_0$
and arrows~$\alpha_{i,1},\ldots,\alpha_{i,n_i}\colon i_0\rightarrow i$ for all~$i\in Q_0$ to~$Q$.
We define~$\widehat{\vect{d}}=\vect{d}+\vect{i}_0\in\mathbb{N}\widehat{Q}_0$.
For a choice of a positive~$\kappa\in\Hom(\mathbb{Z}Q_0,\mathbb{Z})$
and a positive integer~$C$ such that~$\kappa(\vect{d})<C\cdot\gcd(\Theta)$,
we define
\begin{equation}
  \widehat{\Theta}\colonequals C\Theta-\kappa+\kappa(\vect{d})\vect{i}_0^*.
\end{equation}
We define
\begin{equation}
  \modulispace[\Theta\framed]{Q,\vect{d},\vect{n}}
  \colonequals
  \modulispace[\widehat{\Theta}\semistable]{\widehat{Q},\widehat{\vect{d}}}
\end{equation}
(which does not depend on the choices of~$\kappa$ and~$C$
since all resulting stability parameters belong to the same chamber in~$\Stab({\widehat{\vect{d}}})$).
It is always smooth and irreducible, and admits a projective map
\begin{equation}
  \label{equation:projection-to-wall}
  p\colon\modulispace[\Theta\framed]{Q,\vect{d},\vect{n}}\rightarrow\modulispace[\Theta\semistable]{Q,\vect{d}}.
\end{equation}
The fibres can be described as follows.
For a polystable type~$\xi=(\vect{d}_k^{m_k})_{k=1}^s$ as before,
we consider the local quiver~$Q_\xi$
and the local dimension vector~$\vect{d}_\xi$ for~$Q_\xi$.
We consider the local framing datum~$\vect{n}_\xi=\sum_{k=1}^s(\vect{n}\cdot\vect{d}_k)\vect{i}_k$.
The following result holds,
see \cite[Theorem~4.1]{MR2511752}.
\begin{proposition}
  \label{proposition:fibres-framed}
  For any~$V\in S_\xi\subset\modulispace[\Theta\semistable]{Q,\vect{d}}$,
  we have an isomorphism
  \begin{equation}
    p^{-1}(V)\cong\modulispace[0\semistable,\nilpotent]{Q_\xi,\vect{d}_\xi,\vect{n}_\xi}.
  \end{equation}
\end{proposition}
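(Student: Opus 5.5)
The plan is to compute the fibre of~$p$ \'etale-locally over the Luna stratum~$S_\xi$, using Luna's slice theorem in the same way as for \cref{proposition:local-quiver}, and to carry the framing data along.

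First, I will rewrite~$\modulispace[\Theta\framed]{Q,\vect{d},\vect{n}}$ as a quotient by~$\GL_{\vect{d}}$ rather than by~$\GL_{\widehat{\vect{d}}}$. Since~$\widehat{d}_{i_0}=1$, the scalar factor at~$i_0$ can be absorbed into~$\GL_{\vect{d}}$. Consider pairs~$(W,f)$ with~$W\in\repspace[\Theta\semistable]{Q,\vect{d}}$ and~$f=(f_i)_i$, where~$f_i\in\Hom(\mathbb{C}^{n_i},W_i)$. Because~$\kappa(\vect{d})<C\gcd(\Theta)$, the~$\widehat{\Theta}$-stability of such a pair should unwind to the following condition: $W$ is~$\Theta$-semistable, and no proper subrepresentation~$U\subset W$ with~$\Theta(\dimvect U)=0$ contains the image of~$f$. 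The map~$p$ then forgets~$f$. I will verify this translation directly from the definition of~$\widehat{\Theta}$ by comparing~$C\Theta(\vect{e})$ with~$\kappa(\vect{e})$.

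Next, I apply Luna's slice theorem at a polystable point~$V=\bigoplus_k U_k^{m_k}$ with~$\Aut V=H=\prod_k\GL_{m_k}$. \'Etale-locally, $\repspace[\Theta\semistable]{Q,\vect{d}}$ near the orbit of~$V$ is~$\GL_{\vect{d}}\times^H N_V$, where the normal space is~$N_V=\mathrm{Ext}^1(V,V)$. Because~$Q$ is hereditary, $N_V$ is $H$-equivariantly isomorphic to~$\repspace{Q_\xi,\vect{d}_\xi}$. Over~$N_V$, the framing space~$\bigoplus_i\Hom(\mathbb{C}^{n_i},V_i)$ decomposes as an $H$-representation. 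Using~$V_i=\bigoplus_k (U_k)_i\otimes\mathbb{C}^{m_k}$, it becomes~$\bigoplus_k\Hom(\mathbb{C}^{\vect{n}\cdot\vect{d}_k},\mathbb{C}^{m_k})$, which is exactly the framing space for~$(Q_\xi,\vect{d}_\xi,\vect{n}_\xi)$. Hence the framed moduli space is \'etale-locally~$(N_V\times\mathrm{Fr}_\xi)^{\rm st}/H$ over~$N_V/\!/H$, and the fibre of~$p$ over~$V$ is the part lying over~$0\in N_V/\!/H$. By Hilbert--Mumford, that preimage is the nullcone in~$N_V$, namely the nilpotent representations of~$Q_\xi$. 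So~$p^{-1}(V)$ is the quotient by~$H$ of pairs consisting of a nilpotent $Q_\xi$-representation and a framing which are stable in the transported sense.

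The main obstacle is to show that the transported stability condition is exactly framed~$0$-stability for~$Q_\xi$. Framed $0$-stability means that the framing image generates the whole representation, i.e.\ no proper subrepresentation contains it. For this I need that, in the fibre, the $\Theta$-semistable representations~$W$ with associated graded~$V$, together with their subrepresentations of $\Theta$-slope zero, correspond to nilpotent $Q_\xi$-representations and their subrepresentations. I would argue this orbitwise: a point of the slice corresponds to an iterated extension of the~$U_k$. A slope-zero subrepresentation~$U$ containing~$\mathrm{im} f$ is itself an iterated extension of the~$U_k$, and it should correspond to a subspace stable under the $Q_\xi$-action. I would check this via the Hilbert--Mumford criterion for the $H$-action on~$N_V\times\mathrm{Fr}_\xi$ with linearisation induced by~$\widehat{\Theta}$, comparing one-parameter subgroups of~$H$ with filtrations by slope-zero subrepresentations. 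Once this matching is established, the fibre is identified with~$\modulispace[0\semistable,\nilpotent]{Q_\xi,\vect{d}_\xi,\vect{n}_\xi}$. Independence from the chosen slice follows because the fibre is determined by~$V$ alone.
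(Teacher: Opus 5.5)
Your outline is correct and follows the standard Luna-slice argument behind \cite[Theorem~4.1]{MR2511752}, which the paper cites rather than reproves; your unwinding of~$\widehat{\Theta}$-stability and your identification of the framing space with~$\bigoplus_k\Hom(\mathbb{C}^{\vect{n}\cdot\vect{d}_k},\mathbb{C}^{m_k})$ are both right. The step you flag as the main obstacle closes without any orbitwise matching of subrepresentations (which would be awkward anyway, since the slice is only \'etale over~$N_V$): because~$\Theta(\vect{d}_k)=0$, the character of~$\GL_{\vect{d}}$ defined by~$\widehat{\Theta}$ restricts on~$H=\prod_k\GL_{m_k}$ to~$\prod_k\det(h_k)^{\kappa(\vect{d}_k)}$, which is exactly the framed stability~$\widehat{0}$ for~$(Q_\xi,\vect{d}_\xi,\vect{n}_\xi)$ with~$\kappa_\xi=\sum_k\kappa(\vect{d}_k)\vect{i}_k^*$; and the~$\chi$-semi-invariants of~$G$ on~$G\times^HT$ are precisely the~$\chi|_H$-semi-invariants of~$H$ on~$T$, so the transported stability is framed~$0$-stability, i.e.\ generation by the framing.
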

A representation of the latter moduli space consists of
a nilpotent representation~$W$ of~$Q_\xi$ of dimension vector~$\vect{d}_\xi$,
together with linear maps~$f_k\colon\mathbb{C}^{\vect{n}\cdot\vect{d}_k}\rightarrow W_{i_k}$ for all~$k$.
Such a tuple is stable if and only if the images of the~$f_k$ generate~$W$ as a representation of~$Q_\xi$.

\subsection{Projection to the wall}
\label{subsection:projection-to-wall}
If a stability parameter~$\overline{\Theta}$ belongs to
the closure of a chamber in~$\Stab(\vect{d})$ containing a stability parameter~$\Theta$ in its interior,
we construct a projection map between the corresponding moduli spaces.

The following lemma is closely related to the methods of \cite{MR5007902}.

\begin{lemma}
  \label{lemma:projection}
  Let~$\Theta,\overline{\Theta}\in\Stab(\vect{d})$ be two stability parameters,
  where~$\Theta$ does not belong to a proper wall.
  Assume that the following property holds:
  if~$\vect{e}\leq\vect{d}$ is a proper non-zero dimension vector such that~$\Theta(\vect{e})<0$,
  but~$\overline{\Theta}(\vect{e})>0$,
  then there exists a dimension vector~$\vect{e}'\hookrightarrow\vect{e}$ such that~$\Theta(\vect{e}')>0$,
  or a dimension vector~$\vect{d}-\vect{e}\twoheadrightarrow\vect{f}'$ such that~$\Theta(\vect{f}')<0$.
  Then~$\overline{\Theta}$ belongs to the closure of the chamber containing~$\Theta$,
  and~$\repspace[\Theta\semistable]{Q,\vect{d}}$ is contained in~$\repspace[\smash{\overline{\Theta}}\semistable]{Q,\vect{d}}$.
\end{lemma}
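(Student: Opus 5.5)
The plan is to prove the inclusion of semistable loci first, directly from the definitions, and then deduce the statement about chambers from it.

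\emph{The inclusion.} Since~$\Theta$ does not lie on a proper wall, $\repspace[\Theta\semistable]{Q,\vect{d}}=\repspace[\Theta\stable]{Q,\vect{d}}$. So take a~$\Theta$-stable~$V$ and suppose, for a contradiction, that~$V$ has a non-zero proper subrepresentation~$U$ with~$\overline{\Theta}(\vect{e})>0$, where~$\vect{e}=\dimvect(U)$. Stability of~$V$ gives~$\Theta(\vect{e})<0$, so the hypothesis of \cref{lemma:projection} applies to~$\vect{e}$, and there are two cases.
\begin{enumerate}
  \item If~$\vect{e}'\hookrightarrow\vect{e}$ with~$\Theta(\vect{e}')>0$, then by definition of~$\hookrightarrow$ the representation~$U$ has a subrepresentation~$U'$ of dimension vector~$\vect{e}'$. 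It is non-zero because~$\Theta(\vect{e}')\neq0$, and proper in~$V$ because~$\vect{e}'\leq\vect{e}<\vect{d}$. Then~$U'\subset V$ violates $\Theta$-stability.
  \item If~$\vect{d}-\vect{e}\twoheadrightarrow\vect{f}'$ with~$\Theta(\vect{f}')<0$, then~$V/U$ has a factor of dimension vector~$\vect{f}'$. Hence~$V$ has a subrepresentation~$K$ with~$\dimvect K=\vect{d}-\vect{f}'$. It is non-zero since~$\vect{f}'\leq\vect{d}-\vect{e}<\vect{d}$, and proper since~$\vect{f}'\neq0$. Because~$\Theta\in\Stab(\vect{d})$, we get~$\Theta(\dimvect K)=-\Theta(\vect{f}')>0$, again a contradiction.
\end{enumerate}
Hence~$V$ is~$\overline{\Theta}$-semistable.

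\emph{The closure statement.} Consider the segment~$\Theta_t=(1-t)\Theta+t\overline{\Theta}$ for~$t\in[0,1]$. If~$\Theta_t(\vect{e})>0$ and~$\Theta(\vect{e})<0$, then~$\overline{\Theta}(\vect{e})>0$. The same argument as above therefore gives~$\repspace[\Theta\semistable]{Q,\vect{d}}\subseteq\repspace[\Theta_t\semistable]{Q,\vect{d}}$ for every~$t$. It then suffices to show that no proper wall~$\mathrm{W}_{\vect{e}}$ strictly separates~$\Theta$ from~$\overline{\Theta}$. Replacing~$\vect{e}$ by~$\vect{d}-\vect{e}$ if needed, we may assume~$\Theta(\vect{e})<0<\overline{\Theta}(\vect{e})$.

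Suppose such a wall is proper and crossed at~$t_0\in(0,1)$. The aim is a contradiction from the existence of~$\Theta_{t_0}$-semistable~$W$ of dimension~$\vect{e}$ and~$W'$ of dimension~$\vect{d}-\vect{e}$. Properness holds along the whole wall, so the idea is to move along~$\mathrm{W}_{\vect{e}}$ to a point arbitrarily close to~$\Theta$ while staying inside~$\mathrm{W}_{\vect{e}}$. Near~$\Theta$, the sign of every relevant linear form is governed by~$\Theta$ itself. Then in the first alternative of the hypothesis, the subrepresentation of~$W$ of dimension~$\vect{e}'$, with~$\Theta(\vect{e}')>0$, destabilizes~$W$. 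In the second alternative, the quotient of~$W'$ of dimension~$\vect{f}'$ destabilizes~$W'$.

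I expect this second step to be the main obstacle, because semistability at points of the wall near~$\Theta$ has to be compared carefully with the signs of~$\Theta$. The first thing I would try is to avoid walls altogether: take~$\Theta'$ generic and close to~$\Theta_{t}$ for~$t$ slightly less than~$1$. The inclusion above, applied on each sub-segment, shows that the $\Theta$-stable locus lies in the $\Theta'$-stable locus. Both are open and dense (assuming non-emptiness), and two parameters off the walls whose stable loci are nested lie in the same chamber, by the description of chambers in \cite{MR5007902}. Letting~$t\to1$ then puts~$\overline{\Theta}$ in the closure of the chamber of~$\Theta$.
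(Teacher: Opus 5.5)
Your proof of the inclusion $\repspace[\Theta\semistable]{Q,\vect{d}}\subseteq\repspace[\overline{\Theta}\semistable]{Q,\vect{d}}$ is correct and is the paper's argument. Your reduction of the closure statement to excluding a proper wall $\mathrm{W}_{\vect{e}}$ with $\Theta(\vect{e})<0<\overline{\Theta}(\vect{e})$ is also what the paper does. The gap is in actually excluding such a wall; the paper only says ``the same argument as above excludes such a wall''.

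\textbf{The first sketch fails as written.} Since $\Theta\notin\mathrm{W}_{\vect{e}}$, points of the wall stay at positive distance from $\Theta$. Moreover, on the wall the signs are not governed by $\Theta$. The form $\vect{e}$ vanishes there, and so does the certificate $\vect{e}'$ (or $\vect{f}'$) whenever it lies in $L=\mathbb{Q}\vect{e}+\mathbb{Q}\vect{d}$, e.g.\ $\vect{e}'=\vect{d}-\vect{e}$. In that case the subrepresentation of $W$ of dimension vector $\vect{e}'$ never destabilizes $W$ at any point of the wall, so no contradiction comes out.

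\textbf{The fallback does not close the gap either.} It rests on an unproved principle, ``nested stable loci imply the same chamber'', and on non-emptiness of $\repspace[\Theta\semistable]{Q,\vect{d}}$, which is not a hypothesis. Without non-emptiness the principle is false: take two vertices, no arrows, and $\vect{d}=\vect{i}_1+\vect{i}_2$. The parameters $\pm(\vect{i}_1^*-\vect{i}_2^*)$ both have empty semistable locus, yet they are separated by the proper wall $\{0\}$.

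\textbf{The missing idea} is to use properness at \emph{every} point of the linear subspace $\mathrm{W}_{\vect{e}}\subset\Stab(\vect{d})$, together with an extremal choice.

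Suppose that for all $\Theta''\in\mathrm{W}_{\vect{e}}$ there is a $\Theta''$-semistable $X$ of dimension vector $\vect{m}$, and that $\vect{m}'\hookrightarrow\vect{m}$. Then $\Theta''(\vect{m}')\leq 0$ on the whole subspace, hence $\Theta''(\vect{m}')=0$ there, so $\vect{m}'\in L$. The dual statement holds for quotients. This disposes of certificates outside $L$ at once.

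For certificates in $L$, argue as follows.
\begin{enumerate}
\item Let $\mathcal{S}$ be the finite set of proper non-zero $\vect{m}\in L$ with $\Theta(\vect{m})<0$ such that, for every $\Theta''\in\mathrm{W}_{\vect{e}}$, there are $\Theta''$-semistable $X,Y$ of dimension vectors $\vect{m},\vect{d}-\vect{m}$.
\item For $\vect{m}=x\vect{e}+y\vect{d}\in L$ one has $\Theta(\vect{m})=x\Theta(\vect{e})$ and $\overline{\Theta}(\vect{m})=x\overline{\Theta}(\vect{e})$. Hence every $\vect{m}\in\mathcal{S}$ also satisfies $\overline{\Theta}(\vect{m})>0$, and $\vect{e}\in\mathcal{S}$.
\item Choose $\vect{m}\in\mathcal{S}$ with $\Theta(\vect{m})$ minimal and apply the hypothesis to it.
\item First case: $\vect{m}'\in L$, so $\Theta''(\vect{m}')=0$. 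The subrepresentation $X'\subseteq X$ of dimension vector $\vect{m}'$ and the quotient $X/X'$ are therefore $\Theta''$-semistable. The pair $X/X'$, $X'\oplus Y$ shows $\vect{m}-\vect{m}'\in\mathcal{S}$, with $\Theta(\vect{m}-\vect{m}')<\Theta(\vect{m})$.
\item Second case: $\vect{f}'\in L$. Let $Y\twoheadrightarrow Y''$ be the quotient of dimension vector $\vect{f}'$, with kernel $Y'$. The pair $X\oplus Y''$, $Y'$ shows $\vect{m}+\vect{f}'\in\mathcal{S}$. Here $\vect{m}+\vect{f}'\neq\vect{d}$ because $\Theta(\vect{f}')<0<\Theta(\vect{d}-\vect{m})$, and again $\Theta(\vect{m}+\vect{f}')<\Theta(\vect{m})$.
\end{enumerate}
Either way minimality is contradicted. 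Hence no proper wall separates $\Theta$ from $\overline{\Theta}$, which is exactly what the closure statement requires.
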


\begin{proof}
  Assume that~$V$ is a~$\Theta$-(semi-)stable representation of dimension vector~$\vect{d}$
  which is not~$\overline{\Theta}$-semistable.
  Then there exists a proper non-zero subrepresentation~$U\subset V$,
  say of dimension vector~$\vect{e}$,
  such that~$\overline{\Theta}(\vect{e})>0$,
  although~$\Theta(\vect{e})<0$.
  First assume that there exists~$\vect{e}'\hookrightarrow\vect{e}$ such that~$\Theta(\vect{e}')>0$.
  Then~$U$ admits a subrepresentation~$U'$ of dimension vector~$\vect{e}'$,
  which is also a subrepresentation of~$V$,
  and thus~$\Theta(\vect{e}')<0$, a contradiction.
  Now assume that there exists~$\vect{d}-\vect{e}\twoheadrightarrow\vect{f}'$
  such that~$\Theta(\vect{f}')<0$.
  Then~$V/U$ admits a factor representation~$W'$ of dimension vector~$\vect{f}'$,
  which is also a factor representation of~$V$,
  and thus~$\Theta(\vect{f}')>0$, again a contradiction.

  A wall~$\mathrm{W}_{\vect{e}}$ lies on the line segment between~$\Theta$ and~$\overline{\Theta}$
  if and only if
  (without loss of generality, otherwise replace~$\vect{e}$ by~$\vect{d}-\vect{e}$)
  we have~$\Theta(\vect{e})<0$, but~$\overline{\Theta}(\vect{e})>0$.
  Now the same argument as above excludes such a wall.
\end{proof}

It follows from the final implication that, in this situation, we have a well-defined projective map
\begin{equation}
  p\colon\modulispace[\Theta\semistable]{Q,\vect{d}}\rightarrow\modulispace[\overline{\Theta}\semistable]{Q,\vect{d}}.
\end{equation}
In fact, the map~$p$ from \cref{subsection:framing}
as well as the map~$\pi$ to the moduli space of semisimple representations,
can be interpreted as special cases of this construction.

Again, the fibres can be described using local quivers.
For a~$\overline{\Theta}$-polystable type~$\xi=(\vect{d}_k^{m_k})_{k=1}^s$,
we consider the local quiver~$Q_\xi$
and the local dimension vector~$\vect{d}_\xi$ for~$Q_\xi$.
We consider the stability parameter
\begin{equation}
  \Theta_\xi\colonequals\sum_{k=1}^s\Theta(\vect{d}_k)\vect{i}_k^*.
\end{equation}
The following result holds,
see \cite[Theorem 3.5]{MR3644807}.
\begin{proposition}
  \label{proposition:fibres-projection-wall}
  For any~$V\in S_\xi\subset\modulispace[\overline{\Theta}\semistable]{Q,\vect{d}}$,
  we have an isomorphism
  \begin{equation}
    p^{-1}(V)\cong\modulispace[\Theta_\xi\semistable,\nilpotent]{Q_\xi,\vect{d}_\xi}.
  \end{equation}
\end{proposition}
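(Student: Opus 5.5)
The plan is to deduce the statement from Luna's étale slice theorem applied to the $\overline{\Theta}$-semistable locus, combined with the description of $\modulispace[\Theta\semistable]{Q,\vect{d}}$ as a relative Proj over $\modulispace[\smash{\overline{\Theta}}\semistable]{Q,\vect{d}}$.

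\textbf{Step 1: the slice at $V$.} Fix a $\overline{\Theta}$-polystable $V=\bigoplus_k U_k^{m_k}$ of type $\xi$. Its stabilizer in $\GL_{\vect{d}}$ is $G_V=\prod_k\GL_{m_k}$, which is reductive. The normal space to the orbit is $\mathrm{Ext}^1_Q(V,V)=\bigoplus_{k,l}\mathrm{Ext}^1(U_k,U_l)\otimes\Hom(\mathbb{C}^{m_k},\mathbb{C}^{m_l})$. Since $\Hom(U_k,U_l)=\delta_{k,l}\mathbb{C}$ for $\overline{\Theta}$-stable representations of equal slope, we get $\dim\mathrm{Ext}^1(U_k,U_l)=\delta_{k,l}-\langle\dimvect U_k,\dimvect U_l\rangle_Q$. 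Hence the normal space is $G_V$-equivariantly $\repspace{Q_\xi,\vect{d}_\xi}$, exactly as in \cref{proposition:local-quiver}.

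Luna's theorem, applied to the affine quotient of the $\overline{\Theta}$-semistable locus, gives an étale neighbourhood of $V$ of the form
\begin{equation}
  \GL_{\vect{d}}\times^{G_V}\repspace{Q_\xi,\vect{d}_\xi}\rightarrow\repspace[\smash{\overline{\Theta}}\semistable]{Q,\vect{d}}.
\end{equation}
This morphism is strongly étale: it is étale, it is $\GL_{\vect{d}}$-equivariant, and the induced square of quotients is cartesian. On quotients it restricts to the étale map $\modulispace[0\semistable]{Q_\xi,\vect{d}_\xi}\rightarrow\modulispace[\smash{\overline{\Theta}}\semistable]{Q,\vect{d}}$, which sends $0$ to $V$.

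\textbf{Step 2: matching the stability conditions.} By \cref{lemma:projection}, $\modulispace[\Theta\semistable]{Q,\vect{d}}$ is the relative Proj over $\modulispace[\smash{\overline{\Theta}}\semistable]{Q,\vect{d}}$ of the sheaf of graded algebras of $\GL_{\vect{d}}$-semi-invariants of weights $C\Theta$. These are taken on the $\overline{\Theta}$-semistable locus, i.e.\ they are semi-invariants relative to the $\overline{\Theta}$-quotient.

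Because the slice morphism is strongly étale, this sheaf pulls back to the sheaf of $\GL_{\vect{d}}$-semi-invariants on $\GL_{\vect{d}}\times^{G_V}\repspace{Q_\xi,\vect{d}_\xi}$. These in turn are the $G_V$-semi-invariants on $\repspace{Q_\xi,\vect{d}_\xi}$ for the restricted character. The character $\chi_\Theta=\prod_i\det(g_i)^{-\Theta(\vect{i})}$, restricted to $G_V$ embedded via $\GL_{m_k}$ acting on $U_k^{m_k}$, is $\prod_k\det(h_k)^{-\Theta(\dimvect U_k)}$. This is the character $\chi_{\Theta_\xi}$ for $\Theta_\xi=\sum_k\Theta(\vect{d}_k)\vect{i}_k^*$.

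Hence, étale locally over $V$, the map $p$ is identified with the projective map
\begin{equation}
  \modulispace[\Theta_\xi\semistable]{Q_\xi,\vect{d}_\xi}\rightarrow\modulispace[0\semistable]{Q_\xi,\vect{d}_\xi}.
\end{equation}

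\textbf{Step 3: passing to the fibre.} Relative Proj commutes with base change. So the fibre $p^{-1}(V)$ is isomorphic to the fibre of this map over $0$. By definition that fibre is $\modulispace[\Theta_\xi\semistable,\nilpotent]{Q_\xi,\vect{d}_\xi}$: the zero fibre of the quotient by invariants consists of representations whose oriented cycles act nilpotently.

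\textbf{Main obstacle.} The delicate step is Step 2: the semi-invariants, and hence the semistable loci, must genuinely transfer along the slice. Two points need care. First, $\GL_{\vect{d}}$-semi-invariants on $\GL_{\vect{d}}\times^{G_V}S$ must correspond exactly to $G_V$-semi-invariants on $S$, via induction from $G_V$ to $\GL_{\vect{d}}$ for the character $\chi_\Theta$. Second, the kernel of the action, the diagonal scalars, must act compatibly; this uses $\Theta(\vect{d})=0$ and hence $\Theta_\xi(\vect{d}_\xi)=0$.

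I would first verify both points with the Hilbert--Mumford criterion. One-parameter subgroups of $G_V$ with limits in the slice correspond to filtrations of deformations of $V$ by subrepresentations of dimension vectors $\sum_k e_k\vect{d}_k$. On such a subrepresentation, $\Theta$ takes the value $\sum_k e_k\Theta(\vect{d}_k)=\Theta_\xi(\vect{e})$. This numerically confirms that $\Theta$-stability near $V$ is $\Theta_\xi$-stability on the local quiver.
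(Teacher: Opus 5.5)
The paper gives no proof of this statement; it quotes it from \cite[Theorem~3.5]{MR3644807}. Your Luna-slice argument is the standard route to such a result. It even yields a little more than the fibre statement, namely an \'etale-local model of $p$ near $V$ as $\modulispace[\Theta_\xi\semistable]{Q_\xi,\vect{d}_\xi}\to\modulispace[0\semistable]{Q_\xi,\vect{d}_\xi}$ near $0$.

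The following parts are correct:
the identification of the normal space with $\repspace{Q_\xi,\vect{d}_\xi}$;
the computation $\chi_\Theta|_{G_V}=\chi_{\Theta_\xi}$;
the induction of semi-invariants from $G_V$ to $\GL_{\vect{d}}$;
the compatibility of relative Proj with strongly \'etale base change.
The two points you single out as the main obstacle are in fact routine.

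A small correction to Step~1: $\GL_{\vect{d}}\cdot V$ need not be closed in $\repspace{Q,\vect{d}}$. So Luna's theorem must be applied on $q^{-1}(Y_0)$, where $q\colon\repspace[\overline{\Theta}\semistable]{Q,\vect{d}}\to\modulispace[\overline{\Theta}\semistable]{Q,\vect{d}}$ is the quotient map and $Y_0\ni V$ is an affine open. It then gives strongly \'etale maps $\GL_{\vect{d}}\times^{G_V}S\to q^{-1}(Y_0)$ and $S\to\repspace{Q_\xi,\vect{d}_\xi}$ onto saturated open neighbourhoods. It does not give a map defined on all of $\GL_{\vect{d}}\times^{G_V}\repspace{Q_\xi,\vect{d}_\xi}$. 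This is harmless for computing fibres.

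The step that does not follow from what you cite is the first sentence of Step~2. \Cref{lemma:projection} only gives $\repspace[\Theta\semistable]{Q,\vect{d}}\subseteq\repspace[\overline{\Theta}\semistable]{Q,\vect{d}}$. Hence it only shows that $\Theta$-semistable points are semistable for $\mathrm{Proj}$ of the sheaf $\mathcal{A}$ of $C\Theta$-semi-invariants over $\modulispace[\overline{\Theta}\semistable]{Q,\vect{d}}$.

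The converse is not formal, for two reasons:
\begin{enumerate}
  \item Sections of $\mathcal{A}$ over an affine open of the base need not come from global semi-invariants.
  \item By King's criterion on the affine preimage, relative semistability of $x$ only tests subrepresentations $W\subseteq x$ with $\overline{\Theta}(\dimvect W)=0$. These are the ones for which the limit $W\oplus x/W$ stays $\overline{\Theta}$-semistable. By contrast, $\Theta$-semistability also tests those with $\overline{\Theta}(\dimvect W)<0$.
\end{enumerate}
Your Hilbert--Mumford check on the slice cannot see these. Only dimension vectors $\sum_k e_k\vect{d}_k$ occur there, and all of them are annihilated by $\overline{\Theta}$.

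To close the gap:
\begin{enumerate}
  \item $\repspace[\Theta\semistable]{Q,\vect{d}}$ is the union of the loci $\{f\neq 0\}$ for global semi-invariants $f$ of weight $C\Theta$. So it is saturated in the relatively semistable locus, and $\modulispace[\Theta\semistable]{Q,\vect{d}}$ is an open subscheme of $\mathrm{Proj}\,\mathcal{A}$.
  \item This open immersion is proper, because $p$ is proper and $\mathrm{Proj}\,\mathcal{A}$ is separated over $\modulispace[\overline{\Theta}\semistable]{Q,\vect{d}}$. Hence its image is also closed.
  \item $\mathrm{Proj}\,\mathcal{A}$ is irreducible, being a quotient of an open subset of $\repspace{Q,\vect{d}}$. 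So the two coincide.
\end{enumerate}
Alternatively, relative semistability implies $\Theta_t$-semistability for $\Theta_t=(1-t)\overline{\Theta}+t\Theta$ with $0<t\ll 1$, and $\Theta_t$ lies in the chamber of $\Theta$.

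With this in place, Steps~2 and~3 go through as written.
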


\section{Classification of Fano fourfolds from subspace quivers}
\label{section:classification}
In this section we will classify Fano fourfolds arising from subspace quivers,
under the assumption that the dimension vector lives in
(or can be reflected into) the fundamental domain~$\mathrm{F}_Q$.
Without this assumption, the classification problem becomes very impractical to perform.

\subsection{Moduli spaces for subspace quivers}
Let~$Q^{(m)}$ be the~$m$-subspace quiver with set of vertices
\begin{equation}
  Q^{(m)}_0=\{i_1,\ldots,i_m,j\}
\end{equation}
and set of arrows
\begin{equation}
  Q^{(m)}_1=\{(\alpha_1\colon i_1\rightarrow j),\ldots,(\alpha_m\colon i_m\rightarrow j)\}.
\end{equation}
We consider dimension vectors
\begin{equation}
  \vect{d}=\sum_{k=1}^me_k\vect{i}_k+d\vect{j}
\end{equation}
for which we can assume, without loss of generality, by permuting the vertices~$i_1,\ldots,i_m$, that
\begin{equation}
  0<e_1\leq\ldots\leq e_m<d.
\end{equation}
We abbreviate the dimension vector~$\vect{d}=(e_1,\dots,e_m;d)$
as~$\vect{d}=(1^{k_1},2^{k_2},\ldots;d)$,
where in the tuple of~$e_k$, an entry~$e$ appears with multiplicity~$k_e$.

We consider, again without loss of generality, stability parameters
\begin{equation}
  \Theta=\sum_{k=1}^m\Theta_{i_k}\vect{i}_k^*+\Theta_j\vect{j}^*\in\Stab(\vect{d})
\end{equation}
such that~$\Theta_{i_k}>0$ for all~$k=1,\ldots,m$,
and we moreover assume (after translation) that
\begin{equation}
  \sum_{k=1}^m\Theta_{i_k}e_k+\Theta_jd=0.
\end{equation}

\begin{notation}
  We denote the resulting moduli spaces~$\modulispace[\Theta_\can\semistable]{Q^{(m)},\vect{d}}$
  by~$\modulispace{e_1,\ldots,e_m;d}$,
  or the shorthand~$\modulispace{1^{k_1},2^{k_2},\ldots;d}$.
\end{notation}

Before we classify and study these moduli spaces,
let us give an alternative description,
used already in \cref{theorem:main}.
For this we need the following easy lemma.
\begin{lemma}
  \label{lemma:stable-implies-injective}
  Under the above assumptions on~$\vect{d}$ and~$\Theta$,
  let~$V$ be a~$\Theta$-stable representation of dimension vector~$\vect{d}$.
  Then for all~$\alpha_k\colon i_k\to j$, with~$k=1,\ldots,m$,
  the morphism~$V(\alpha_k)\colon V_{i_k}\to V_j$ is injective.
\end{lemma}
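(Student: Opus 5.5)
We argue by contradiction: a nonzero kernel of some $V(\alpha_k)$ yields a destabilizing subrepresentation. Suppose that for some $k$ the map $V(\alpha_k)\colon V_{i_k}\to V_j$ has nonzero kernel $K\subset V_{i_k}$. Consider the subrepresentation $U\subset V$ defined by
\begin{equation}
  U_{i_k}=K,\qquad U_{i_l}=0\text{ for }l\neq k,\qquad U_j=0,
\end{equation}
with all maps zero. This is a subrepresentation, since the only arrow leaving $i_k$ is $\alpha_k$, and $V(\alpha_k)$ sends $K$ to $0=U_j$. Its dimension vector is $\vect{e}=(\dim K)\vect{i}_k$, with $\dim K\geq 1$.

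We next check that $U$ is a proper subrepresentation. It is nonzero since $K\neq 0$. It is proper because $U_j=0$ while $d>0$ (recall $e_k<d$, so in particular $d\geq 1$). Hence $U$ is a proper nonzero subrepresentation of $V$.

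Finally, we evaluate the stability parameter on $U$. By the standing normalization $\Theta_{i_k}>0$, we get $\Theta(\vect{e})=(\dim K)\,\Theta_{i_k}>0$. This contradicts $\Theta$-stability of $V$, which requires $\Theta(\dimvect U)<0$; it even contradicts semistability. Therefore every $V(\alpha_k)$ is injective.

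There is no real obstacle here. The only point needing care is that the normalization $\Theta_{i_k}>0$ has been assumed. If one wants to apply the lemma to $\Theta_\can$ specifically, one checks this directly:
\begin{equation}
  \Theta_\can(\vect{i}_k)=\langle\vect{d},\vect{i}_k\rangle-\langle\vect{i}_k,\vect{d}\rangle=e_k-(e_k-d)=d>0.
\end{equation}
So the canonical stability parameter indeed satisfies the positivity assumption.
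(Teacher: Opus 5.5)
Your proof is correct and is the paper's argument: the kernel of $V(\alpha_k)$, placed at vertex $i_k$ with zero at every other vertex, is a subrepresentation on which $\Theta$ is positive, which contradicts stability. Your extra checks are correct details the paper leaves implicit: properness via $U_j=0$, and positivity $\Theta_\can(\vect{i}_k)=d>0$.
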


\begin{proof}
  If~$V(\alpha_k)$ is not injective,
  then the representation~$K$ which has~$\ker V(\alpha_k)$ at vertex~$i_k$,
  and~0 for all other vertices is a subrepresentation of~$V$.
  But~$\Theta(\dimvect(K))>0$ by our assumption on~$\Theta$,
  contradicting stability of~$V$.
\end{proof}

This allows us to give the following geometric interpretation.

\begin{corollary}
  \label{corollary:git-description}
  The moduli space~$\modulispace{e_1,\ldots,e_m;d}$
  is isomorphic to a GIT quotient
  for the diagonal action of~$\PGL(V)$ on the product of Grassmannians
  \begin{equation}
    \prod_{k=1}^m\Gr(e_k,V),
  \end{equation}
  where~$V$ is a~$d$-dimensional~$\mathbb{C}$-vector space,
  a tuple of subspaces~$(U_1,\ldots,U_m)$
  is taken to be~$\Theta$-semistable if
  \begin{equation}
    \label{equation:stability}
    \frac{\dim\sum_{k=1}^mU_k'}{d}\geq\frac{\sum_{k=1}^m\Theta_{i_k}\dim U_k'}{\sum_{k=1}^m\Theta_{i_k}e_k}
  \end{equation}
  for all tuples of subspaces~$(U_k'\subset U_k)_{k=1}^m$,
  and it is~$\Theta$-stable if the strict inequality holds for all proper non-zero such tuples, and~$\sum_{k=1}^mU_k=V$.
\end{corollary}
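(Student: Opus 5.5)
We will prove \cref{corollary:git-description} by identifying the GIT problem for $\GL_{\vect{d}}$ acting on $\repspace{Q^{(m)},\vect{d}}$ with a two-step quotient. We first quotient by $\prod_k\GL(V_{i_k})$, which lands in the product of Grassmannians. We then quotient by $\GL(V_j)$, whose scalars act trivially, leaving $\PGL(V)$.

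The first step is the reduction to injective maps. For $\Theta$-semistable $V$ the argument of \cref{lemma:stable-implies-injective} applies verbatim: the kernel representation $K$ at vertex $i_k$ satisfies $\Theta(\dimvect K)=\Theta_{i_k}\dim\ker V(\alpha_k)>0$, which violates semistability. So $\repspace[\Theta\semistable]{Q^{(m)},\vect{d}}$ lies in the open subset $\prod_k\mathrm{Inj}(V_{i_k},V)$. On this subset $\prod_k\GL(V_{i_k})$ acts freely, with geometric quotient $\prod_k\Gr(e_k,V)$ given by $(V(\alpha_k))_k\mapsto(\operatorname{im}V(\alpha_k))_k$. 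Two semistable representations in the same $\GL_{\vect{d}}$-orbit thus correspond to tuples of subspaces in the same $\GL(V)$-orbit, and conversely.

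Next we translate the stability condition. Subrepresentations $U$ of a representation with injective arrows have dimension vector $\sum_k\dim U'_k\,\vect{i}_k+\dim W\,\vect{j}$, where $U'_k\subset U_k=\operatorname{im}V(\alpha_k)$ and $W\supseteq\sum_kU'_k$. Since $\Theta_j<0$ (forced by $\sum_k\Theta_{i_k}e_k+\Theta_jd=0$), the most destabilizing choice for fixed $(U'_k)$ is $W=\sum_kU'_k$. Substituting $\Theta_j=-\sum_k\Theta_{i_k}e_k/d$ turns $\Theta(\dimvect U)\le0$ into exactly \eqref{equation:stability}. Subrepresentations with $W$ strictly larger than $\sum_k U'_k$ are strictly less destabilizing, except in the case $U'_k=0$ for all $k$, $W\ne0$. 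In that case strictness requires $\Theta_j\dim W<0$, which holds. The properness condition for stability then needs care: the tuple $U'_k=U_k$ for all $k$ yields a proper subrepresentation exactly when $\sum_kU_k\neq V$, where equality in \eqref{equation:stability} would be allowed. This is why $\sum_kU_k=V$ must be imposed for stability. We expect this bookkeeping of the strict inequalities to be the main, if minor, obstacle.

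Finally we match the GIT linearizations. Semi-invariants of weight $C\Theta$ on $\repspace{Q^{(m)},\vect{d}}$ that are semi-invariant under $\prod_k\GL(V_{i_k})$ with character $\det^{-C\Theta_{i_k}}$ correspond, via the Plücker description, to sections of $\bigotimes_k\mathcal{O}(C\Theta_{i_k})$ on $\prod_k\Gr(e_k,V)$. The remaining $\GL(V)$-character $\det^{-C\Theta_j}$ is absorbed so that the sections become $\mathrm{SL}(V)$-invariants, or equivalently $\PGL(V)$-invariants after a suitable power. Hence the Proj of the invariant ring defining $\modulispace{e_1,\ldots,e_m;d}$ coincides with the GIT quotient of $\prod_k\Gr(e_k,V)$ by $\PGL(V)$ for the linearization $\boxtimes_k\mathcal{O}(\Theta_{i_k})$. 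The Hilbert–Mumford semistability for this linearization agrees with the condition derived above, since both are characterized by the same subspace inequalities. As an alternative that avoids comparing invariant rings directly, we could argue by reduction in stages: $\repspace[\Theta\semistable]{}\to\prod\Gr$ is a principal $\prod_k\GL(V_{i_k})$-bundle onto the semistable locus. Either route gives the isomorphism.
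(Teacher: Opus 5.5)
Your proposal is correct and takes the paper's route: the paper gives no separate proof and treats the corollary as an immediate consequence of \cref{lemma:stable-implies-injective}, i.e.\ injective arrows let you pass to the tuple of images in $\prod_k\Gr(e_k,V)$ and rewrite $\Theta(\dimvect U)\leq 0$ (with the minimal choice $W=\sum_kU_k'$) as \eqref{equation:stability}, and you supply details the paper leaves implicit, namely injectivity at semistable points and the identification of the semi-invariant rings with $\mathrm{SL}(V)$-invariant sections of $\boxtimes_k\mathcal{O}(\Theta_{i_k})$. One small correction to your bookkeeping: for the full tuple $U_k'=U_k$ the inequality \eqref{equation:stability} reads $\dim\sum_kU_k\geq d$, so a tuple with $\sum_kU_k\neq V$ already fails semistability (it is not a case where equality is allowed), and the clause $\sum_kU_k=V$ is simply the non-strict condition for this non-proper tuple.
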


There is the following interpretation of the non-emptiness of such a GIT quotient.
\begin{lemma}
  \label{lemma:stable-general-position}
  Assuming such a tuple~$(U_1,\ldots,U_m)$ of subspaces of~$V$ to be in general position,
  in the sense that~$U_1'+\cdots+U_m'$ is a direct sum in~$V$ whenever~$\sum_{k=1}^m\dim U_k'\leq d$,
  a stable point exists if
  \begin{equation}
    0<\Theta_{i_k}<\frac{\sum_{l=1}^m\Theta_{i_l}e_l}{d}
  \end{equation}
  for all~$k=1,\ldots,m$.
\end{lemma}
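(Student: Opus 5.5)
The plan is to check the stability inequality of \cref{corollary:git-description} directly on a tuple $(U_1,\ldots,U_m)$ in general position. Abbreviate $S=\sum_{l=1}^m\Theta_{i_l}e_l$. We need to show that
\begin{equation}
  \frac{\dim\sum_k U_k'}{d}>\frac{\sum_k\Theta_{i_k}\dim U_k'}{S}
\end{equation}
for every proper non-zero tuple $(U_k'\subset U_k)_k$, and that $\sum_k U_k=V$. Write $f_k=\dim U_k'$ and $D=\dim\sum_kU_k'$.

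First we make sure that general position is achievable. A general tuple in $\prod_k\Gr(e_k,V)$ satisfies the direct-sum condition for all the finitely many numerical types $(f_k)$ with $\sum f_k\le d$. Indeed, for fixed $(f_k)$ the bad locus is a proper closed subset, and the incidence condition is closed. So the hypothesis is generic and does not need separate justification. Spanning then also follows. If $\sum e_k\ge d$ we can pick subspaces $U_k'\subset U_k$ with $\sum f_k=d$; their sum is direct, hence equals $V$. The case $\sum e_k<d$ cannot occur under the hypothesis, because then $S<(\max\Theta_{i_k})\sum e_k<(S/d)\cdot d$, wait more carefully: $S=\sum\Theta_{i_k}e_k<(S/d)\sum e_k<S$, a contradiction.

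Next we split into two cases according to $\sum_kf_k$.

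\emph{Case $\sum f_k\le d$.} Here general position gives $D=\sum_kf_k$. The hypothesis $\Theta_{i_k}<S/d$ gives
\begin{equation}
  \sum_k\Theta_{i_k}f_k<\frac{S}{d}\sum_kf_k=\frac{S}{d}D.
\end{equation}
This inequality is strict because the tuple is non-zero, so some $f_k>0$. Dividing by $S$ gives the desired strict inequality.

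\emph{Case $\sum f_k>d$.} This is the step to be careful about. Choose subspaces $U_k''\subset U_k'$ with $\sum_k\dim U_k''=d$, which is possible since $\sum f_k>d$. By general position $\sum_kU_k''=V$, so $D=d$ and the left-hand side equals $1$. The right-hand side is at most $\sum_k\Theta_{i_k}e_k/S=1$, with equality only if $f_k=e_k$ for all $k$, that is, only for the improper tuple. For proper tuples some $f_k<e_k$ and $\Theta_{i_k}>0$, so the inequality is strict.

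The main obstacle is the bookkeeping of what ``proper'' means in the second case. We must make sure that the full tuple $(U_k)$, which gives equality, is excluded, and that the equality $D=d$ in that case is derived from the general position hypothesis applied to a subtuple, not to the tuple itself. Combining both cases with spanning, \cref{corollary:git-description} shows that the general tuple is $\Theta$-stable, so a stable point exists.
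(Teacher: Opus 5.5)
Your two-case argument is correct and matches the paper's proof. When $\sum_k\dim U_k'\le d$, the sum is direct and $\Theta_{i_k}<S/d$ gives the strict inequality. When $\sum_k\dim U_k'$ exceeds $d$, one has $\sum_kU_k'=V$, and the right-hand side is below $1$ because some $\dim U_k'<e_k$ and $\Theta_{i_k}>0$. You also fill in two points the paper leaves implicit. First, you explain why $\sum_kU_k'=V$ in the second case, by passing to a subtuple of total dimension exactly $d$. Second, you check the spanning condition $\sum_kU_k=V$, via the observation that the hypothesis on $\Theta$ forces $\sum_ke_k>d$.

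Your opening claim that general position is generic is false, and you should remove it. For fixed $(f_k)$ the bad locus is closed, but it need not be proper. Take $m=2$, $(e_1,e_2;d)=(2,2;3)$ and $\Theta_{i_1}=\Theta_{i_2}$, so that $\Theta_{i_k}<\tfrac43\Theta_{i_k}=S/d$ holds. Any two planes in $\mathbb{C}^3$ meet in a line $L$. The subtuple $(L,L)$ has total dimension $2\le 3$, but $L+L=L$ is not direct. Worse, it gives $\tfrac13<\tfrac12$ in \eqref{equation:stability}, so there are no semistable points at all. Similarly, for $(1^5,2;3)$ no tuple is in general position: a line in $U_6\cap(U_1+U_2)$, together with $U_1$ and $U_2$, has total dimension $3$ but spans at most a plane. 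So the existence of a tuple in general position is a genuine hypothesis. With your paragraph, the lemma would become a false unconditional existence statement. Since the statement explicitly assumes such a tuple, your proof of the lemma as stated is fine once that paragraph is deleted. Also delete the abandoned inequality $S<(\max\Theta_{i_k})\sum_ke_k$, which in general is only non-strict.
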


\begin{proof}
  First assume that~$\sum_{k=1}^m\dim U_k'\leq d$ for a non-zero tuple~$(U_1',\ldots,U_m')$, thus
  \begin{equation}
    \dim\sum_{k=1}^mU_k'=\sum_{k=1}^m\dim U_k'.
  \end{equation}
  Then the assumptions on~$\Theta$ imply
  \begin{equation}
    0<\sum_{k=1}^m\left( \sum_{l=1}^m\Theta_{i_l}e_l-d\Theta_{i_k} \right)\dim U_k'
  \end{equation}
  which can be rewritten as
  \begin{equation}
    d\sum_{k=1}^m\Theta_{i_k}\dim U_k'<\sum_{k=1}^m\dim U_k'\cdot\sum_{l=1}^m\Theta_{i_l}e_l,
  \end{equation}
  and thus
  \begin{equation}
    \frac{\sum_{k=1}^m\Theta_{i_k}\dim U_k'}{\sum_{k=1}^m\Theta_{i_k}e_k}<\frac{\sum_{k=1}^m\dim U_k'}{d}=\frac{\dim\sum_{k=1}^mU_k'}{d}.
  \end{equation}
  Now assume~$\sum_{k=1}^m\dim U_k'\geq d$ for a proper tuple~$(U_1',\ldots,U_m')$, which implies~$\sum_{k=1}^mU_k'=V$. Then
  \begin{equation}
    0<\sum_{k=1}^m\Theta_{i_k}(e_k-\dim U_k'),
  \end{equation}
  as the tuple of subspaces is assumed to be proper, so not all~$U_k'=U_k$,
  whilst~$\Theta_{i_k}>0$ and~$\dim U_k'\leq e_k$ for all~$k$.
  Rearranging this to~$\sum_{k=1}^m\Theta_{i_k}\dim U_k'<\sum_{k=1}^m\Theta_{i_k}e_k$
  and dividing by the positive number~$\sum_{k=1}^m\Theta_{i_k}e_k$,
  we thus obtain the strict inequality
  \begin{equation}
    \frac{\sum_{k=1}^m\Theta_{i_k}\dim U_k'}{\sum_{k=1}^m\Theta_{i_k}e_k}<1=\frac{\dim\sum_{k=1}^mU_k'}{d},
  \end{equation}
  as required for stability in \cref{corollary:git-description}.
\end{proof}

For later use we also state and prove
the following general method for identifying framed moduli spaces
amongst the~$\modulispace{e_1,\ldots,e_m;d}$.

\begin{proposition}
  \label{proposition:subspace-framed}
  If~$d$ divides~$\sum_{k=1}^me_k$,
  then~$\modulispace[\Theta_\can\framed]{Q^{(m)},\vect{d},\vect{j}}$ is isomorphic to~$\modulispace{1,e_1,\ldots,e_m;d}$,
  which uses the quiver~$Q^{(m+1)}$.
\end{proposition}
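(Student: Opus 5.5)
The plan is to compare stability conditions directly on the level of representations, rather than through wall-and-chamber arguments. By the construction in \cref{subsection:framing}, the framed quiver $\widehat{Q}$ for the framing datum $\vect{n}=\vect{j}$ has one extra vertex $i_0$ and a single arrow $i_0\to j$. So $\widehat{Q}$ is exactly $Q^{(m+1)}$, and $\widehat{\vect{d}}=\vect{i}_0+\vect{d}$ is the dimension vector $(1,e_1,\ldots,e_m;d)$. Both sides are therefore GIT quotients of the same representation space $\repspace{Q^{(m+1)},\widehat{\vect{d}}}$ by the same group. It suffices to show that the $\widehat{\Theta}$-semistable locus equals the $\Theta'_\can$-semistable locus, where $\Theta'_\can$ is the canonical stability parameter for $(1,e_1,\ldots,e_m;d)$.

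\textbf{Step 1: the two stability parameters.} A direct computation from the Euler form gives, up to the sign normalization $\Theta_{i_k}>0$, the following. The parameter $\Theta_\can$ for $\vect{d}$ is proportional to $d$ on each $i_k$ and $-\sum_l e_l$ on $j$. The parameter $\Theta'_\can$ for $\widehat{\vect{d}}$ is proportional to $d$ on $i_0$ and on each $i_k$, and $-(1+\sum_l e_l)$ on $j$. Write $\sum_l e_l=sd$, using the divisibility hypothesis. By \cref{lemma:stable-implies-injective} and \cref{corollary:git-description}, a $\Theta'_\can$-(semi)stable point is a tuple $(L,U_1,\ldots,U_m)$ with $L$ a line. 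Stability then reads
\[
  (1+sd)\dim W > d\Bigl(\varepsilon+\sum_k\dim U_k'\Bigr)
\]
for every proper non-zero subtuple, where $W=\sum U_k'+L'$ and $\varepsilon=\dim L'\in\{0,1\}$.

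\textbf{Step 2: the framed stability condition.} Unwinding $\widehat{\Theta}=C\Theta-\kappa+\kappa(\vect{d})\vect{i}_0^*$ with $\kappa$ small relative to $C\gcd(\Theta)$ gives the standard description of framed stability. A pair $(V,v)$, with $v$ the image of the framing arrow, is stable if and only if two conditions hold: $V$ is $\Theta_\can$-semistable, and no proper subrepresentation $U$ with $\Theta_\can(\dimvect U)=0$ contains $v$. In subspace language the second condition says: whenever $s\dim W=\sum\dim U_k'$ with $W\neq V$, we need $v\notin W$.

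\textbf{Step 3: comparing the inequalities.} The comparison uses integrality.
\begin{itemize}
  \item Suppose $s\dim W>\sum\dim U_k'$, so that $\sum\dim U_k'\le s\dim W-1$. Then $d(\varepsilon+\sum\dim U_k')\le ds\dim W-d+d\varepsilon$, and this is $<(1+sd)\dim W$ in all cases.
  \item Suppose equality $s\dim W=\sum\dim U_k'$ holds. Then the new inequality becomes $\dim W>d\varepsilon$. For $W\neq V$ this fails exactly when $L'\subset W$, that is, when $v\in W$. This matches Step 2.
  \item Suppose $s\dim W<\sum\dim U_k'$, so that $\sum\dim U_k'\ge s\dim W+1$. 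Then $d(\varepsilon+\sum\dim U_k')\ge ds\dim W+d\ge(1+sd)\dim W$, since $\dim W\le d$. So the $\Theta'_\can$-condition fails, just as $\Theta_\can$-semistability fails.
  \item Subtuples with $W=V$ but not all $U_k'=U_k$ need separate treatment, as in the proof of \cref{lemma:stable-general-position}. Here the new inequality holds automatically when $\varepsilon=1$. When $\varepsilon=0$ it reduces to the old one, with equality excluded because $d\sum\dim U_k'<d\cdot sd+d$.
\end{itemize}
These cases together identify the stable loci. They also show that $\Theta'_\can$ lies on no proper wall for $\widehat{\vect{d}}$, since strict semistability never occurs, and the isomorphism follows.

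I expect the main obstacle to be Step 2. One has to translate the framed stability $\widehat{\Theta}$ honestly, with its $\kappa$ and $C$, into the clean statement ``$\Theta$-semistable, plus $v$ avoids $\Theta$-zero subrepresentations''. This includes checking that the smallness condition on $\kappa$ rules out every other destabilizing subobject. I would first do this directly from the definition of $\widehat{\Theta}$, splitting subrepresentations of $\widehat{V}$ according to whether their dimension at $i_0$ is $0$ or $1$.
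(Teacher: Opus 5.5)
Your proof is correct, but it takes a different and considerably longer route than the paper, which never unwinds framed stability. The paper exploits the freedom in choosing $\kappa$ and $C$. With $S=\sum_k e_k$ it takes $\kappa=d\sum_k\vect{i}_k^*+\vect{j}^*$ and $C=S+2$. The divisibility hypothesis enters only to give $\gcd(\Theta_\can)=d$, so that $\kappa(\vect{d})=d(S+1)<Cd$ and this choice is admissible. A direct evaluation then gives $\widehat{\Theta}=(S+1)\,\Theta'_\can$ on the nose, where $\Theta'_\can=d\vect{i}_0^*+d\sum_k\vect{i}_k^*-(S+1)\vect{j}^*$ is the canonical parameter for $(1,e_1,\ldots,e_m;d)$. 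Proportional parameters have the same semistable locus, so the moduli spaces coincide. That argument relies on the asserted independence of the framed moduli space from $\kappa$ and $C$.

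You instead derive the intrinsic description of framed stability: $\Theta_\can$-semistability, plus the framing vector avoiding every proper subrepresentation on which $\Theta_\can$ vanishes. This description is precisely what makes that independence true. You then match it against $\Theta'_\can$-stability using integrality of $s\dim W$, which is where your divisibility hypothesis enters. Step 2, which you flagged as the main obstacle, is routine and goes through exactly as you sketch by splitting on the dimension at $i_0$. Your route buys a self-contained identification of the stable loci, at the cost of the case analysis in Step 3; the paper's explicit choice of $\kappa$ and $C$ would let you skip Steps 2 and 3 entirely.

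The genericity of $\Theta'_\can$ can also be seen in one line. Write $\varepsilon$, $f_k$, $w$ for the entries of $\vect{e}\leq\widehat{\vect{d}}$ at $i_0$, $i_k$, $j$. Since $\gcd(d,1+sd)=1$, the equation $d(\varepsilon+\sum_k f_k)=(1+sd)w$ forces $w\in\{0,d\}$. Hence $\Theta'_\can$ vanishes on no proper non-zero $\vect{e}$, and semistability equals stability automatically.
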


\begin{proof}
  For the quiver~$Q^{(m)}$ and~$\vect{d}=(e_1,\ldots,e_m;d)$,
  we have that~$\Theta_\can=d\sum_{k=1}^m\vect{i}_k^*-\sum_{k=1}^me_k\cdot\vect{j}^*$.
  We consider~$\kappa=d\sum_{k=1}^m\vect{i}_k^*+\vect{j}^*$ and~$C=\sum_{k=1}^me_k+2$.
  Then the conditions on~$\Theta$,~$\kappa$ and~$C$ of the construction of framed moduli spaces are fulfilled since
  \begin{equation}
    \kappa(\vect{d})=d\sum_{k=1}^me_k+d<\left( \sum_{k=1}^me_k+2 \right)d=C\gcd(\Theta_\can)
  \end{equation}
  by assumption.
  By definition, we then have~$\widehat{Q}=Q^{(m+1)}$,~$\widehat{\vect{d}}=(1,e_1,\ldots,e_m;d)$
  and~$\widehat{\Theta}=(\sum_{k=1}^me_k+1)\cdot\Theta'$,
  where~$\Theta'$ is the canonical stability parameter for~$\widehat{\vect{d}}$.
\end{proof}

Finally,
we observe the following,
addressing one of the special features of subspace quiver moduli.
\begin{lemma}
  \label{lemma:no-vector-fields}
  Let~$\vect{d}=(e_1,\ldots,e_m;d)$ be an indivisible dimension vector for
  the subspace quiver~$Q^{(m)}$,
  and let~$\Theta$ be a stability parameter.
  Suppose moreover that
  \begin{itemize}
    \item $\Theta$ does not belong to a proper wall, and
    \item $\vect{d}$ is~$\Theta$-amply stable.
  \end{itemize}
  Then the moduli space~$X=\modulispace[\Theta\semistable]{Q^{(m)},\vect{d}}$
  admits no non-trivial global vector fields,
  i.e.,~$\mathrm{H}^0(X,\mathrm{T}_X)=0$.
\end{lemma}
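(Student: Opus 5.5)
We plan to deduce this from the identification of global vector fields with Hochschild cohomology, which is recorded in \cref{theorem:fano-quiver-moduli} (via \cite[Theorem~B]{MR4883104}, with strong ample stability replaced by ample stability as in \cite{ample-stability}). That result gives $\mathrm{H}^0(X,\mathrm{T}_X)\cong\mathrm{HH}^1(\mathbb{C}Q^{(m)})$ for acyclic $Q$, for $\Theta$ off the proper walls, and for $\vect{d}$ amply stable. The subspace quiver is acyclic, and the other two hypotheses are exactly the ones in the lemma. We only need to check that the cited result applies to an arbitrary such $\Theta$, not just $\Theta_\can$: its proof uses the Fano property only through amply stable, wall-avoiding parameters. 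Indivisibility of $\vect{d}$ ensures that stability and semistability coincide on a fine moduli space, with no stabiliser issues beyond the scalars. So everything reduces to computing $\mathrm{HH}^1$ of the path algebra of $Q^{(m)}$.

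For that we use Happel's formula \cite{MR1035222}. For a finite connected acyclic quiver $Q$,
\[
\dim\mathrm{HH}^1(\mathbb{C}Q)=1-|Q_0|+\sum_{\alpha\in Q_1}\dim e_{\operatorname{t}(\alpha)}\mathbb{C}Q\,e_{\operatorname{s}(\alpha)}.
\]
Here the summand for $\alpha$ counts the paths parallel to $\alpha$. In $Q^{(m)}$ each arrow $\alpha_k\colon i_k\to j$ is the only path from $i_k$ to $j$, so every summand equals $1$. We get
\[
\dim\mathrm{HH}^1=1-(m+1)+m=0.
\]
Equivalently, the underlying graph is a tree and there are no parallel paths, so the outer derivations vanish. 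Hence $\mathrm{H}^0(X,\mathrm{T}_X)=0$.

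The main obstacle is justifying the vector-field isomorphism for a general $\Theta$, with ample rather than strong ample stability. If citing this directly is not acceptable, a fallback is a direct argument. On the open subset of representations with all $V(\alpha_k)$ injective, which contains the stable locus by \cref{lemma:stable-implies-injective}, $X$ is a quotient of an open subset of $\prod_k\Gr(e_k,V)$ by $\PGL(V)$. Ample stability makes the unstable locus of codimension at least two, so by Hartogs-type extension, global vector fields on $X$ correspond to $\PGL(V)$-invariant vector fields on the stable locus modulo $\mathfrak{pgl}(V)$. One then shows these come from $\bigoplus_k\mathfrak{gl}(V)$ modulo the diagonal $\mathfrak{gl}(V)$ and the scalars at each vertex, leaving nothing. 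This is essentially the Happel computation above, seen geometrically.
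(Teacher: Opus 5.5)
Your proposal matches the paper's proof: the paper likewise checks that the hypotheses are those of \cite[Assumption~2.4]{MR4883104}, invokes \cite[Theorem~B]{MR4883104} directly for the given (not necessarily canonical) $\Theta$ to get $\mathrm{H}^0(X,\mathrm{T}_X)\cong\mathrm{HH}^1(\mathbb{C}Q^{(m)})$, and concludes with Happel's formula, which gives $1-(m+1)+m=0$ because the underlying graph of $Q^{(m)}$ is a tree. One small correction to your commentary: indivisibility of $\vect{d}$ is what provides a fine moduli space (a universal representation), whereas the equality of the stable and semistable loci comes from $\Theta$ not lying on a proper wall, not from indivisibility.
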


\begin{proof}
  The conditions on~$\vect{d}$ and~$\Theta$
  correspond to \cite[Assumption~2.4]{MR4883104},
  so that we can apply \cite[Theorem~B]{MR4883104} and get
  \begin{equation}
    \mathrm{H}^0(X,\mathrm{T}_X)\cong\mathrm{Lie}\,\mathrm{Aut}^0(X)\cong\mathrm{HH}^1(\mathbb{C}Q^{(m)}).
  \end{equation}
  Happel~\cite{MR1035222} shows that for the path algebra~$\mathbb{C}Q$
  of a connected acyclic quiver~$Q$,
  \begin{equation}
    \dim\mathrm{HH}^1(\mathbb{C}Q)
    = 1 - \#Q_0 + \sum_{\alpha\in Q_1} \dim e_{\operatorname{t}(\alpha)}\mathbb{C}Qe_{\operatorname{s}(\alpha)},
  \end{equation}
  where~$\dim e_{\operatorname{t}(\alpha)}\mathbb{C}Qe_{\operatorname{s}(\alpha)}$
  counts directed paths from~$\operatorname{s}(\alpha)$ to~$\operatorname{t}(\alpha)$.
  In particular,~$\mathrm{HH}^1(\mathbb{C}Q)=0$ whenever the underlying graph is a tree,
  which applies to the subspace quiver~$Q^{(m)}$.
\end{proof}

\subsection{Classification of fourfolds}
Ideally, our aim would be to classify all dimension vectors~$\vect{d}$ as above
for which we can identify the moduli space~$\modulispace{e_1,\ldots,e_m;d}$
as a smooth projective Fano variety of fixed dimension (i.e., dimension~4~in the present paper).
It suffices to classify such tuples up to admissible reflection.
However, already for the case~$m=5$,
computer experiments provide an uncontrollable wealth of such~$\vect{d}$.
On the other hand,
there also seem to be many isomorphisms between the resulting moduli spaces
that cannot be detected by the general theory described in \cref{section:techniques},
drastically collapsing the number of resulting Fano 4-fold quiver moduli.
As a consequence, we have to make a reasonable additional assumption.

Our choice for this assumption is motivated by the discrepancy between admissible reflections and all reflections:
we require that~$\vect{d}$ is related by admissible reflections to a dimension vector in the fundamental domain~$\mathrm{F}_{Q^{(m)}}$.

Computing~$(\vect{d},\vect{i})$ for each vertex~$i$ of~$Q^{(m)}$,
one finds that~$(\vect{d},\vect{i}_k)=2e_k-d$ for~$k=1,\ldots,m$
whilst we have~$(\vect{d},\vect{j})=2d-\sum_{k=1}^me_k$ for the central vertex.
Since all~$e_k\geq 1$, the support of~$\vect{d}$ is automatically connected.
Thus the condition that~$\vect{d}$ belongs to the fundamental domain~$\mathrm{F}_{Q^{(m)}}$ reads
\begin{equation}
  2e_k\leq d\mbox{ for }k=1,\ldots,m\mbox{ and } 2d\leq\sum_{k=1}^me_k.
\end{equation}

\begin{proposition}
  If a dimension vector~$\vect{d}$ for~$Q^{(m)}$ as above
  belongs to~$\mathrm{F}_{Q^{(m)}}$
  and~$\langle\vect{d},\vect{d}\rangle=-3$,
  then~$\vect{d}$ is one of the following:
  \begin{itemize}
    \item~$\vect{d}=(1,2^4;4)$ for~$Q^{(5)}$,
    \item~$\vect{d}=(1^6;3)$ for~$Q^{(6)}$,
    \item~$\vect{d}=(1^7;2)$ for~$Q^{(7)}$.
  \end{itemize}
\end{proposition}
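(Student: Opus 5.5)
The plan is a direct Diophantine analysis. First compute the Euler form for $Q^{(m)}$ and $\vect{d}=(e_1,\ldots,e_m;d)$:
\begin{equation}
  \langle\vect{d},\vect{d}\rangle=\sum_{k=1}^me_k^2+d^2-d\sum_{k=1}^me_k .
\end{equation}
Set $S=\sum_k e_k$. The condition $\langle\vect{d},\vect{d}\rangle=-3$ then reads $d(S-d)-\sum_k e_k^2=3$. The idea is to rewrite this as a sum of nonnegative terms, each controlled by the fundamental domain inequalities $2e_k\leq d$ and $2d\leq S$.

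A convenient decomposition is
\begin{equation}
  d(S-d)-\sum_k e_k^2=\sum_k e_k(d-e_k)-d^2 .
\end{equation}
Each term $e_k(d-e_k)$ is a concave function of $e_k$. It is increasing on $1\leq e_k\leq d/2$, and it satisfies $e_k(d-e_k)\geq e_k d/2$ there. Hence
\begin{equation}
  \sum_k e_k(d-e_k)-d^2\geq \tfrac{d}{2}S-d^2=\tfrac{d}{2}(S-2d)\geq0 .
\end{equation}
Equality in $e_k(d-e_k)\geq e_kd/2$ holds only when $e_k=d/2$. More precisely,
\begin{equation}
  e_k(d-e_k)-e_kd/2=e_k(d/2-e_k).
\end{equation}
This gives the exact identity
\begin{equation}
  3=\tfrac{d}{2}(S-2d)+\sum_k e_k\left(\tfrac{d}{2}-e_k\right),
\end{equation}
in which every summand is nonnegative and a half-integer multiple.

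The remaining work is case analysis on the small value $3$. Suppose some $e_k<d/2$. Then that term is at least $e_k/2$ when $d$ is odd, and at least $e_k$ when $d$ is even. Also, since each $e_k\geq1$, we have $S\geq m$. Bound $d$ first. If $S>2d$, then $\frac{d}{2}(S-2d)\geq d/2$, so $d\leq6$. If $S=2d$, then not all $e_k$ can equal $d/2$ unless $m=4$, and I expect that subcase to give $\langle\vect{d},\vect{d}\rangle=0$ (extended Dynkin), which is excluded. So some $e_k<d/2$, and the total deficit $\sum_k(d/2-e_k)e_k=3$ bounds $d$.

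With $d\leq6$ or so, enumerate the cases by hand:
\begin{itemize}
  \item For $d=2$, all $e_k=1$, and the identity gives $S-4=3$, so $m=7$. This is $(1^7;2)$.
  \item For $d=3$, all $e_k=1$, each deficit term equals $1/2$, so $\frac{3}{2}(m-6)+\frac{m}{2}=3$, giving $m=6$. This is $(1^6;3)$.
  \item For $d=4$, $e_k\in\{1,2\}$. With $a$ ones and $b$ twos, $2(a+2b-8)+a=3$, forcing $a=1$, $b=4$. This is $(1,2^4;4)$.
\end{itemize}
For $d\geq5$, show there are no solutions: the deficit terms are at least $1/2$ or $1$, and forcing $S\geq 2d$ makes the total exceed $3$. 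For example, with $d=5$ each $e_k\leq2$ contributes at least $1$. Then $S\geq10$ requires $m\geq5$, so the deficit is at least $5>3$. For $d=6$, $e_k\leq3$, and the $e_k<3$ contribute at least $2$ each, while the $e_k=3$ contribute $0$. Then $S\geq12$ gives either $3(S-12)\geq3$ together with some deficit, or $S=12$, which reduces to the $m=4$, $(3^4;6)$-type equality case, where the value is $0$. So $d=6$ also fails.

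I expect the main obstacle to be organizing this final enumeration cleanly rather than any conceptual step. The key is to keep the nonnegative identity above as the single bookkeeping device, so that each $d$ is disposed of in one line.
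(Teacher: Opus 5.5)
Your identity $3=\tfrac{d}{2}(S-2d)+\sum_k e_k(\tfrac d2-e_k)$ is exactly the paper's identity $6=d(S-2d)+\sum_k e_k(d-2e_k)$ divided by $2$. Your split into $S>2d$ versus $S=2d$, and odd versus even $d$, is also the paper's, so the strategy is right. The gap is in the bound on $d$: you only prove $d\le 6$ when $S>2d$.

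\begin{itemize}
\item When $S=2d$ and $d$ is odd, every $e_k<d/2$ and the deficit is at least $S/2=d$, so $d\le 3$. That case is fine.
\item When $S=2d$ and $d$ is even, a single entry $e_k<d/2$ only gives $e_k(\tfrac d2-e_k)\ge \tfrac d2-1$, hence $d\le 8$, not $d\le 6$.
\end{itemize}

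The value $d=8$ is genuinely borderline, because $e_k=1$ and $e_k=3$ both contribute exactly $3$. Your enumeration stops at $d=6$, so this case is missing. It can be excluded as follows. An entry $e_k=2$ would contribute $4>3$, so exactly one entry lies in $\{1,3\}$ and all others equal $4$. Then $S$ is odd, contradicting $S=16$. The paper handles this explicitly: it lists the possible triples $(e_k,d,e_k(d-2e_k))$, concludes $d\in\{6,8\}$, and rules out both values.

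Two smaller steps are also misstated.

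\begin{itemize}
\item For $d=6$, the case $S=12$ does not reduce to $(3^4;6)$; for example, $(1,2,3,3,3;6)$ has $S=12$. The correct reason is parity: each term $e_k(3-e_k)$ is $0$ or $2$, so the deficit cannot equal $3$. This is also the paper's argument.
\item For $d=4$, your equation $2(a+2b-8)+a=3$ also has the solution $(a,b)=(5,1)$. You need the fundamental domain inequality $a+2b\ge 8$, i.e.\ $S\ge 2d$, to discard it.
\end{itemize}
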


\begin{proof}
  The conditions~$e_k\geq 1$ and~$2e_k\leq d$ exclude the case~$d=1$.

  If~$d=2$, then~$e_k=1$ for all~$k=1,\ldots,m$,
  and the condition~$\langle\vect{d},\vect{d}\rangle=-3$ immediately yields~$m=7$,
  and we arrive at the third case.

  If~$d=3$, we still have~$e_k=1$ for all~$k$,
  and again~$\langle\vect{d},\vect{d}\rangle=-3$ yields~$m=6$, thus the second case.

  If~$d=4$, we have~$e_k=1$ or~$e_k=2$,
  and we assume that~$e_1=\dots=e_{m'}=1$ and~$e_{m'+1}=\ldots=e_m=2$.
  Then~$\langle\vect{d},\vect{d}\rangle=-3$ translates to~$m'=4m-19$,
  whereas~$2d\leq\sum_{k=1}^me_k$ translates to~$m'\leq 2m-8$.
  We thus find~$m=5$ and~$m'=1$, thus arriving at the first case.

  It thus suffices to exclude the possibility~$d\geq 5$.
  We can rewrite the condition~$\langle\vect{d},\vect{d}\rangle=-3$ as
  \begin{equation}
    \sum_{k=1}^me_k(d-2e_k)+d\left( \sum_{k=1}^me_k-2d \right)=6,
  \end{equation}
  noting that all summands are nonnegative and at most~$6$.

  First assume that~$2d<\sum_{k=1}^me_k$, in which case~$d(\sum_{k=1}^me_k-2d)\leq 6$,
  thus~$d=5$ or~$d=6$ by assumption.

  If~$d=5$, we conclude~$\sum_{k=1}^me_k=11$.
  On the other hand, we have
  \begin{equation}
    1=\sum_{k=1}^m e_k(d-2e_k)\geq\sum_{k=1}^me_k,
  \end{equation}
  a contradiction.

  Similarly,~$d=6$ yields~$\sum_{k=1}^me_k=13$.
  On the other hand, we have
  \begin{equation}
    0=\sum_{k=1}^me_k(d-2e_k),
  \end{equation}
  thus~$e_k=3$ for all~$k$, again yielding a contradiction.

  Thus we can assume~$2d=\sum_{k=1}^me_k$, thus~$\sum_{k=1}^me_k(d-2e_k)=6$.

  If~$d$ is odd, we have~$d-2e_k\geq 1$ for all~$k$, thus
  \begin{equation}
    6=\sum_{k=1}^me_k(d-2e_k)\geq\sum_{k=1}^me_k=2d,
  \end{equation}
  and thus~$d\leq 3$, a contradiction.

  So assume that~$d$ is even.
  There exists an index~$1\leq k\leq m$ such that~$d-2e_k>0$ and~$e_k(d-2e_k)\leq 6$.
  We find the following possibilities for the triple~$(e_k,d,e_k(d-2e_k))$:
  \begin{equation}
    (1,6,4),\;(2,6,4),\;(1,8,6),\;(3,8,6).
  \end{equation}
  Consequently,~$d=6$ or~$d=8$.

  If~$d=6$, then~$e_k\leq 3$ for all~$k$, and all summands~$e_k(d-2e_k)$ are either~$4$ or~$0$, a contradiction to such summands summing up to~$6$.

  If~$d=8$, then~$e_k\leq 4$ and~$e_k\not=2$ by the above list, and we find the following pairs~$(e_k,e_k(d-2e_k)):$
  \begin{equation}
    (1,6),(3,6),(4,0).
  \end{equation}
  The conditions~$\sum_{k=1}^me_k=16$ and~$\sum_{k=1}^me_k(d-2e_k)=6$ again yield a contradiction.
\end{proof}

\begin{theorem}
  \label{theorem:classification}
  Assume that a dimension vector~$\vect{d}$ for~$Q^{(m)}$ as above is given such that:
  \begin{itemize}
    \item~$\vect{d}$ is related by admissible reflections to a dimension vector belonging to~$\mathrm{F}_{Q^{(m)}}$,
    \item~$\langle\vect{d},\vect{d}\rangle=-3$,
    \item~$\Theta_\can\in\Stab(\vect{d})$ does not belong to a proper wall,
    \item~$\vect{d}$ is (strongly)~$\Theta_\can$-amply stable.
  \end{itemize}
  Then, up to admissible reflections,~$\vect{d}$ is one of the following:
  \begin{itemize}
    \item~$\vect{d}=(1,2^4;4)$ for~$Q^{(5)}$,
    \item~$\vect{d}=(1^5,2;3)$ for~$Q^{(6)}$,
    \item~$\vect{d}=(1^4,2^2;3)$ for~$Q^{(6)}$,
    \item~$\vect{d}=(1^7;2)$ for~$Q^{(7)}$.
  \end{itemize}
\end{theorem}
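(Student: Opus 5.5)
The plan is to reduce to the preceding proposition and then run a finite case analysis. By the first hypothesis, $\vect{d}$ is related by admissible reflections to some $\vect{d}_0\in\mathrm{F}_{Q^{(m)}}$. Reflections $\mathrm{s}_i$ preserve the Tits form, and linear duality (\cref{lemma:linear-duality}) does not change the dimension vector, so $\langle\vect{d}_0,\vect{d}_0\rangle=\langle\vect{d},\vect{d}\rangle=-3$. The preceding proposition then gives
\[
\vect{d}_0\in\{(1,2^4;4),\ (1^6;3),\ (1^7;2)\}.
\]
The statement is only up to admissible reflections, so what remains is to describe the dimension vectors of subspace-quiver type in the admissible-reflection class of each $\vect{d}_0$. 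For each of them I then decide whether $\Theta_\can$ avoids proper walls and whether strong ample stability holds.

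First I compute the reflections explicitly.
\begin{itemize}
  \item At the central vertex $j$ we have $(\vect{d},\vect{j})=2d-\sum_k e_k$. For all three $\vect{d}_0$ this vanishes, so $\mathrm{s}_j$ acts trivially.
  \item At an outer vertex $i_k$, $\mathrm{s}_{i_k}$ replaces $e_k$ by $d-e_k$ and reverses the arrow $\alpha_k$. After dualising that arrow, this corresponds in \cref{corollary:git-description} to replacing $\Gr(e_k,V)$ by $\Gr(d-e_k,V)$ via orthogonal complements, with the induced stability.
\end{itemize}
So, up to the identifications in \cref{section:techniques}, the class of $(e_1,\ldots,e_m;d)$ consists of the vectors obtained by replacing some of the $e_k$ by $d-e_k$, whenever the reflection is admissible, i.e.\ the sign conditions on $\langle\vect{d},\vect{i}\rangle$ and $\Theta_i$ hold. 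This gives the finite list of candidates:
\begin{itemize}
  \item from $(1,2^4;4)$: $(1,2^4;4)$ and $(2^4,3;4)$, since $2\mapsto 2$ is fixed;
  \item from $(1^6;3)$: the vectors $(1^{6-r},2^r;3)$ for $0\le r\le 6$, and up to the symmetry $e\mapsto d-e$ only $r=0,1,2,3$;
  \item from $(1^7;2)$: only $(1^7;2)$.
\end{itemize}

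Next I test each candidate. Using $\Theta_\can=d\sum_k\vect{i}_k^*-\bigl(\sum_k e_k\bigr)\vect{j}^*$, I enumerate the subdimension vectors $\vect{e}=(e'_1,\ldots,e'_m;d')$, using Schofield's criterion $\vect{e}\hookrightarrow\vect{d}$ to decide which walls are proper. Then I check $\langle\vect{e},\vect{d}-\vect{e}\rangle\le-2$ whenever $\Theta_\can(\vect{e})>0$. I expect the following outcomes.
\begin{itemize}
  \item For $(1^6;3)$, $\Theta_\can=3\sum_k\vect{i}_k^*-6\vect{j}^*$ vanishes on $(1,1,0,0,0,0;1)$. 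Both this vector and its complement carry semistable representations, so $\Theta_\can$ lies on a proper wall and the case is excluded. This is consistent with this moduli space being singular (a Segre-cubic-type situation).
  \item $(1^3,2^3;3)$ is self-dual under complements. I expect $\Theta_\can$ to lie on a wall here as well, for instance the one given by $\vect{e}=(1,0,0,1,0,0;2)$ or a similar balanced vector, which should be checked directly.
  \item $(1^5,2;3)$ and $(1^4,2^2;3)$ should pass both tests; this is a finite computation that can be confirmed with \quivertools.
  \item $(1,2^4;4)$ and $(1^7;2)$ should also pass. For $(1^7;2)$ a subvector $(e';d')$ with $d'=1$ gives $\Theta_\can=2|e'|-7\neq0$ by parity, so there is no wall. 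Ample stability reduces to the count $\langle\vect{e},\vect{d}-\vect{e}\rangle=|e'|(1-2+\ldots)$, which is checked case by case.
\end{itemize}

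The main obstacle is the bookkeeping in the reflection step. One has to verify precisely which reflections at outer vertices are admissible and stay within (dual) subspace quivers. One also has to confirm that vectors like $(2^4,3;4)$ or $(1^2,2^4;3)$ are genuinely identified with the listed ones and are not new cases. I would handle this by showing that the GIT description of \cref{corollary:git-description} is invariant under $U_k\mapsto U_k^\perp\subset V^\vee$ for the relevant stability. The wall and ample-stability checks are routine but numerous, and I would delegate them to \quivertools after doing the few extremal subdimension vectors by hand.
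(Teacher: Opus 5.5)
You have the right overall shape (reduce to the three vectors in $\mathrm{F}_{Q^{(m)}}$ via invariance of the Euler form, then test finitely many representatives), which is the paper's. But the reflection step is wrong as written, and that step is the real content of ``up to admissible reflections''.

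There is no operation that dualises a single arrow; linear duality reverses all of them. A reflection at one outer vertex $i_k$ of $Q^{(m)}$ is a reflection at a source, and it is admissible for $\Theta_\can$. It yields the star with only $\alpha_k$ reversed and entry $d-e_k$ at $i_k$. Geometrically, $U_k$ is merely recorded as the quotient $V\to V/U_k$, that is, via $\Gr(e_k,V)\cong\Gr(d-e_k,V^\vee)$. This is not $\PGL(V)$-equivariantly $\Gr(d-e_k,V)$ unless $2e_k=d$. The centre can only be reflected when it is a sink or a source. Hence the only way back to $Q^{(m)}$ is to flip all outer entries simultaneously, possibly interleaved with central reflections. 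Taken literally, your description puts $(1^6;3)$, $(1^5,2;3)$, $(1^4,2^2;3)$ and $(1^3,2^3;3)$ into a single class. The four hypotheses are invariant under admissible reflections and duality, so these would then all pass or all fail, contradicting the outcomes you correctly predict.

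The missing idea is that $\mathrm{F}_Q$ depends only on the underlying graph, so the hypothesis allows $\vect{d}_0$ to live on \emph{any} orientation of the star. The paper fixes $\vect{d}_0$ and runs over orientations, up to duality the number of arrows pointing into the centre. It then reflects at the outer sinks to land on $Q^{(m)}$. This is what produces $(1^{6-r},2^r;3)$, $r=0,\dots,3$, as representatives of \emph{different} classes. By contrast, the reversed orientation for $(1,2^4;4)$ turns out to be equivalent to $(1,2^4;4)$ itself, via four reflections at the entries $2$ and a duality.

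The paper also needs, and proves, that admissible reflections and duality preserve all four hypotheses: the Euler form is preserved, $\Theta_\can$ is sent to $\Theta_\can$, and polystable summands transform by $\mathrm{s}_i$. You never state this, but it is what licenses testing one representative per class. Your final candidate list happens to coincide with the correct one, but your argument does not justify it.

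Two smaller errors:
\begin{itemize}
  \item $(\vect{d},\vect{j})=2d-\sum_ke_k$ equals $-1$ for $(1,2^4;4)$ and $-3$ for $(1^7;2)$, so $\mathrm{s}_j$ is not trivial there. This is harmless, since it stays within the class.
  \item Your proposed wall $(1,0,0,1,0,0;2)$ for $(1^3,2^3;3)$ has $\Theta_\can$-value $-12$. The proper wall is $\mathrm{W}_{\vect{e}}$ for $\vect{e}=(1,1,1,0,0,0;1)$.
\end{itemize}
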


\begin{proof}
  All that has to be done is to take one of the dimension vectors of the previous proposition,
  choose an arbitrary orientation of the graph~$|Q^{(m)}|$ underlying~$Q^{(m)}$,
  test whether the above conditions are fulfilled,
  and provide a sequence of admissible reflections to one of the four cases in the list.

  In this strategy,
  we use that admissible reflections and linear duality
  preserve the conditions of the theorem,
  so that they can be tested on any representative.
  Indeed,
  a direct computation shows that a reflection at a sink~$i$
  leaves the Euler form invariant,
  \begin{equation}
    \langle\mathrm{s}_i(\vect{e}),\mathrm{s}_i(\vect{e}')\rangle_{\mathrm{s}_i(Q)}
    =\langle\vect{e},\vect{e}'\rangle_Q,
  \end{equation}
  whilst linear duality interchanges it with its opposite.
  Hence~$\langle\vect{d},\vect{d}\rangle$ is preserved,
  and the canonical stability parameter is mapped to
  the canonical stability parameter,
  as~$\mathrm{s}_i(\Theta)(\mathrm{s}_i(\vect{e}))=\Theta(\vect{e})$.
  Finally,
  the isomorphism of \cite[Theorem~3.2]{MR5005835}
  (see also \cite[Theorem~2.5]{10.4171/JCA/97})
  matches polystable representations,
  with dimension vectors of the summands transforming by~$\mathrm{s}_i$,
  so that the condition that~$\Theta_\can$ does not lie on a proper wall
  is preserved as well.

  Let us start with~$\vect{d}=(1,2^4;4)$.
  Since reflection at an entry~$2$ does not change~$\vect{d}$,
  we can assume four arrows to be oriented from the entry~$2$ to the entry~$4$.
  If the last arrow is oriented from~$4$ to~$1$,
  we can apply four admissible reflections at the entries~$2$,
  followed by linear duality,
  to arrive at the case where all arrows are oriented towards the entry~$4$.
  The canonical stability parameter is then~$\Theta_\can=4\sum_{k=1}^5\vect{i}_k^*-9\vect{j}^*$,
  and the two conditions on this stability parameter can be verified directly.
  Here, and in the next cases, one can by hand verify that the setting
  is strongly amply stable, and thus amply stable.
  There will never be a case excluded because it is
  not strongly amply stable.

  Let us continue with~$\vect{d}=(1^7;2)$.
  Since reflection at an entry~$1$ does not change~$\vect{d}$,
  we can assume all seven arrows to be oriented towards the entry~$2$.
  The canonical stability parameter is then~$\Theta_\can=2\sum_{k=1}^7\vect{i}_k^*-7\vect{j}^*$,
  and again the two conditions on~$\Theta_\can$ can be verified directly.

  Finally, we consider~$\vect{d}=(1^6;3)$.
  Up to linear duality,
  we have to consider the four cases where~$3$,~$4$,~$5$ or~$6$ arrows are oriented towards the entry~$3$.
  By reflections at the entries~$1$ at sinks,
  we can reflect~$\vect{d}$ to the dimension vectors
  \begin{equation}
    (1^3,2^3;3), \; (1^4,2^2;3),\; (1^5,2;3),\; (1^6;3)
  \end{equation}
  for~$Q^{(6)}$, respectively.
  We compute
  \begin{equation}
    \Theta_\can=3\sum_{k=1}^6\vect{i}_k^*-a\vect{j}^*
  \end{equation}
  for~$a=9,8,7,6$, respectively.
  In the first case,~$\Theta_\can$ belongs to the proper wall~$\mathrm{W}_{\vect{e}}$ for~$\vect{e}=(1,1,1,0,0,0;1)$,
  and in the fourth case~$\Theta_\can$ belongs to the proper wall~$\mathrm{W}_{\vect{e}}$ for~$\vect{e}=(1,1,0,0,0,0;1)$.
  In the second and third case, we can again verify directly the two desired conditions on~$\Theta_\can$.
\end{proof}

By \cref{theorem:fano-quiver-moduli},
the moduli spaces associated to the four dimension vectors in \cref{theorem:classification}
and their canonical stability parameters
are smooth projective Fano fourfolds.
They are labelled~$\QM{1},\ldots,\QM{4}$ in \cref{theorem:main}.
In particular, by~\cref{lemma:no-vector-fields}, all four Fano fourfolds admit no non-trivial global vector fields.

\section{The variety \QM{1}: involution surface bundle}
\label{section:QM1}

We now consider in detail the geometry of the four varieties in the previous theorem,
by constructing explicit maps to known spaces and analyzing their fibres.
We start with~$\QM{1}$, which has Betti numbers~$(1,5,13,5,1)$.

\subsection{Framed structure}
By~\cref{proposition:subspace-framed},
the moduli space~$\QM{1}$ is isomorphic to the framed moduli space of~$\modulispace{2,2,2,2;4}$
(i.e., a quiver moduli space for the subspace quiver~$Q^{(4)}$)
with framing datum~$\vect{j}$.
For the dimension vector~$(2,2,2,2;4)$,
there are no stable representations;
a general representation of this dimension vector
is the direct sum of two stable representations of dimension vector~$\vect{d}\colonequals(1,1,1,1;2)$,
where~$Q^{(4)}$ is also the extended Dynkin quiver~$\widetilde{\mathrm{D}}_4$,
which is thus of tame type.
The moduli space~$\modulispace{\vect{d}}$ is isomorphic to~$\mathbb{P}^1$,
the isomorphism being given by the cross-ratio of four points in~$\mathbb{P}^1$.
We see that
\begin{equation}
  \modulispace{2\vect{d}}\cong\Sym^2\modulispace{\vect{d}}\cong\Sym^2\mathbb{P}^1\cong\mathbb{P}^2.
\end{equation}
Thus, by \cref{proposition:subspace-framed} and the analysis of the framing construction
we obtain the following.
\begin{lemma}
  \label{lemma:qm-1-projection}
  There exists a surjective morphism
  \begin{equation}
    \label{equation:involution-surface-bundle}
    p\colon\QM{1}\rightarrow\mathbb{P}^2.
  \end{equation}
\end{lemma}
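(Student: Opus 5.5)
The plan is to realize $p$ as the projection from a framed moduli space to the underlying semistable moduli space, as in \cref{subsection:framing}, and then identify the target with $\mathbb{P}^2$.

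\textbf{Step 1: the framed description.} For $Q^{(4)}$ and $2\vect{d}=(2,2,2,2;4)$ we have $d=4$ dividing $\sum e_k=8$. So \cref{proposition:subspace-framed} gives
\begin{equation}
  \QM{1}\cong\modulispace[\Theta_\can\framed]{Q^{(4)},2\vect{d},\vect{j}}.
\end{equation}
The projective morphism \eqref{equation:projection-to-wall} then gives a map $\QM{1}\to\modulispace[\Theta_\can\semistable]{Q^{(4)},2\vect{d}}$. Here $\Theta_\can$ for $(2,2,2,2;4)$ is proportional to $(1,1,1,1;-2)$, the same parameter as for $\vect{d}=(1,1,1,1;2)$.

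\textbf{Step 2: identify the target with $\Sym^2\mathbb{P}^1$.} A semistable representation of dimension vector $2\vect{d}$ has S-equivalence class determined by its polystable representative. I would first show that every stable summand of a polystable representation of dimension vector $2\vect{d}$ has dimension vector $\vect{d}$. Since $2\vect{d}$ is a multiple of the null root $\vect{d}$ of $\widetilde{\mathrm{D}}_4$, and $\Theta$-stable representations with $\Theta(\vect{e})=0$ for $\vect{e}\le 2\vect{d}$ must be real or imaginary Schur roots of slope zero, this reduces to a short check. The possible $\vect{e}$ with $\Theta(\vect{e})=0$ and $\vect{e}\le 2\vect{d}$ are $\vect{d}$, $2\vect{d}$, and real roots such as $(1,1,0,0;1)$ or the simple roots $\vect{i}_k$. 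The latter are excluded because $\Theta(\vect{i}_k)>0$, and the former kinds do occur as summands. This is the subtle point: $\modulispace{\vect{d}}\cong\mathbb{P}^1$ includes the strictly semistable points, namely the three polystable classes built from real roots $(1,1,0,0;1)\oplus(0,0,1,1;1)$ (cross-ratio degenerations).

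To handle this cleanly, I would instead use the ring of semi-invariants. For tame quivers, the semi-invariant ring of weight multiples of $\Theta$ for $n\vect{d}$ is known (Skowroński--Weyman) to be the $\Sym^n$ of the one for $\vect{d}$. More elementarily, direct sum gives a finite surjective map $\modulispace{\vect{d}}\times\modulispace{\vect{d}}\to\modulispace{2\vect{d}}$, invariant under the swap. A polystable class of type $2\vect{d}$ decomposes uniquely into stable summands, and these can be grouped uniquely into two pairs each forming a polystable class of type $\vect{d}$, up to order. Surjectivity uses that every semistable representation of type $2\vect{d}$ has a Jordan--Hölder filtration with factors of type $\vect{d}$-sums, which follows from the slope-zero analysis above. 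The induced map $\Sym^2\mathbb{P}^1\to\modulispace{2\vect{d}}$ is then bijective onto a normal variety, hence an isomorphism. Normality holds because $\modulispace{2\vect{d}}$ is a GIT quotient of a smooth variety, and Zariski's main theorem then applies. Finally, $\Sym^2\mathbb{P}^1\cong\mathbb{P}^2$.

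\textbf{Step 3: surjectivity.} The framed moduli space maps onto the semistable moduli space because every polystable $V$ admits a framing generating it. The fibre in \cref{proposition:fibres-framed} is a nilpotent framed moduli space of the local quiver with framing $\vect{n}\cdot\vect{d}_k\ge 1$ at each vertex, which is nonempty since the zero representation with surjective framings is stable. Alternatively, since $p$ is projective with irreducible source of dimension $4$, one can check nonempty fibres over a dense set and use closedness of the image.

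The main obstacle is Step 2: proving that the direct sum map is bijective. In particular, one must check that strictly semistable classes of type $2\vect{d}$ involving real-root summands are correctly accounted for, without double counting or missing points.
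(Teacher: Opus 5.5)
Your proposal follows the paper's route. \cref{proposition:subspace-framed} identifies $\QM{1}$ with the framed moduli space for $(2,2,2,2;4)$ with framing $\vect{j}$, the framing construction gives the projective map to $\modulispace{2\vect{d}}$, and one has $\modulispace{2\vect{d}}\cong\Sym^2\modulispace{\vect{d}}\cong\Sym^2\mathbb{P}^1\cong\mathbb{P}^2$. Your direct-sum map plus Zariski's main theorem fills in the identification that the paper asserts in one line. The stable slope-zero summands are exactly $\vect{d}$ and the six $\vect{e}_K$, as in the paper, and their regrouping into two polystable classes of type $\vect{d}$ is indeed unique. Your enumeration of slope-zero vectors is loose: $\Theta(\vect{i}_k)\neq 0$, and vectors like $(2,0,0,0;1)$ and $(2,2,1,1;3)$ must be ruled out. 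The resulting list is nevertheless correct.

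One of your two surjectivity arguments in Step~3 is false. Over the three points of type $(\vect{e}_K^2,\vect{e}_{\overline{K}}^2)$, the polystable $V$ admits no generating framing. Write $E_K$ for the stable representation of dimension vector $\vect{e}_K$. Then $V_j\cong\mathbb{C}^2\oplus\mathbb{C}^2$, and any framing vector $f=(f_1,f_2)$ lies in the proper slope-zero subrepresentation $E_K\otimes\langle f_1\rangle\oplus E_{\overline{K}}^{\oplus 2}$. In local-quiver terms, the local framing $\vect{n}\cdot\vect{e}_K=1$ is smaller than the multiplicity $2$, so ``the zero representation with surjective framings'' does not exist there. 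The points of that fibre come from non-split extensions ($A,B\neq 0$ in \cref{lemma:fifth-type}).

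Your fallback argument is correct and suffices. Since $p$ is projective, its image is closed. The image contains the dense stratum of sums $U_1\oplus U_2$ of non-isomorphic stables of dimension vector $\vect{d}$, where any framing with both components non-zero is stable. Use that argument instead.
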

In \cref{subsection:fibre-types} we determine all fibres of~$p$,
in \cref{lemma:first-fiber,lemma:second-type,lemma:third-type,lemma:fourth-type,lemma:fifth-type}.
As all fibres turn out to be two-dimensional,
and both~$\QM{1}$ and~$\mathbb{P}^2$ are smooth,
miracle flatness implies that~$p$ is a flat morphism.

\subsection{Fibre types}
\label{subsection:fibre-types}
To analyze the fibres in \eqref{equation:involution-surface-bundle},
we determine the Luna strata for~$\modulispace{2\vect{d}}$.
The only smaller dimension vectors~$\vect{e}$ with~$\Theta_\can(\vect{e})=0$ admitting stable representations are
\begin{itemize}
  \item~$\vect{d}$ (for which there exists a~$(\mathbb{P}^1\setminus\{0,1,\infty\})$-family of stables), and
  \item~$\vect{e}_K=\vect{i}_k+\vect{i}_l+\vect{j}$ for~$K=\{k<l\}\subset\{1,2,3,4\}$,
    for which there exists a unique representation up to isomorphism.
\end{itemize}
In each case the existence of stables can also be verified using~\cref{lemma:stable-general-position}.
The possible decomposition types are thus
\begin{itemize}
  \item $\xi_1=(\vect{d}^1,\vect{d}^1)$,
  \item $\xi_2=(\vect{d}^1,\vect{e}_K^1,\vect{e}_{\overline{K}}^1)$ for~$K$ as before,
  \item $\xi_3=(\vect{e}_K^1,\vect{e}_{\overline{K}}^1,\vect{e}_L^1,\vect{e}_{\overline{L}}^1)$ for~$K$ and~$L$ as before such that~$|K\cap L|=1$, and~$\overline{K},\overline{L}$ their respective complements,
  \item $\xi_4=(\vect{d}^2)$, and
  \item $\xi_5=(\vect{e}_K^2,\vect{e}_{\overline{K}}^2)$ for~$K$ as before.
\end{itemize}
For these five types, we now describe the fibres using~\cref{proposition:fibres-framed}
as framed moduli spaces of nilpotent representations for local quivers.

\begin{lemma}
  \label{lemma:first-fiber}
  For~$V\in S_{\xi_1}\subset\modulispace{2\vect{d}}\cong\mathbb{P}^2$,
  the fibre~$p^{-1}(V)$ is isomorphic to a smooth quadric surface.
\end{lemma}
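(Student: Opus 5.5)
We apply \cref{proposition:fibres-framed} to the framed moduli space $\modulispace[\Theta_\can\framed]{Q^{(4)},2\vect{d},\vect{j}}\cong\QM{1}$ from \cref{proposition:subspace-framed}. We describe the local data for the polystable type $\xi_1=(\vect{d}^1,\vect{d}^1)$, where $V=U\oplus U'$ with $U\not\cong U'$ stable of dimension vector $\vect{d}=(1,1,1,1;2)$.

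First we compute the local quiver $Q_{\xi_1}$. It has two vertices $i_1,i_2$, and the number of arrows from $i_k$ to $i_l$ is $\delta_{k,l}-\langle\vect{d},\vect{d}\rangle$. For the subspace quiver,
\begin{equation}
  \langle\vect{d},\vect{d}\rangle=4+4-4\cdot 2=0,
\end{equation}
so each vertex carries exactly one loop, and there are no arrows between $i_1$ and $i_2$. The local dimension vector is $\vect{d}_{\xi_1}=\vect{i}_1+\vect{i}_2$. The local framing datum is $\vect{n}_{\xi_1}=(\vect{j}\cdot\vect{d})(\vect{i}_1+\vect{i}_2)=2\vect{i}_1+2\vect{i}_2$, since $\vect{j}\cdot\vect{d}=d_j=2$.

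Next we identify the fibre $\modulispace[0\semistable,\nilpotent]{Q_{\xi_1},\vect{d}_{\xi_1},\vect{n}_{\xi_1}}$. A representation consists of a one-dimensional space at each vertex with a loop acting as a scalar. Nilpotency forces both loops to be zero, so the representation $W$ is semisimple with trivial arrows. The framing consists of maps $f_k\colon\mathbb{C}^2\to W_{i_k}=\mathbb{C}$. By the stability criterion recalled after \cref{proposition:fibres-framed}, the images of the $f_k$ must generate $W$. Because the loops vanish and there are no connecting arrows, this means exactly that each $f_k$ is nonzero. Modding out by $\GL_1\times\GL_1$, we get
\begin{equation}
  p^{-1}(V)\cong\mathbb{P}(\Hom(\mathbb{C}^2,\mathbb{C}))\times\mathbb{P}(\Hom(\mathbb{C}^2,\mathbb{C}))\cong\mathbb{P}^1\times\mathbb{P}^1,
\end{equation}
which is a smooth quadric surface. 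One should check that the moduli space is scheme-theoretically this product and not merely a bijective image of it. This holds because the stable locus is an open subset of $\mathbb{A}^4$ on which $\Gm^2$ acts freely, and the quotient is the product of two projective lines.

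The main point needing care is the precise form of the local quiver and framing in \cref{proposition:fibres-framed}. In particular, we must confirm that the loop count uses $\delta_{k,l}-\langle\vect{d}_k,\vect{d}_l\rangle$ for $U\not\cong U'$, so that there are no arrows between $i_1$ and $i_2$ because $\langle\vect{d},\vect{d}\rangle=0$. We must also confirm that the framing $\vect{n}=\vect{j}$ contributes $\vect{j}\cdot\vect{d}=2$ at each vertex. Once these are granted, the remaining steps are formal.
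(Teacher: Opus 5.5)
Your proposal is correct and follows essentially the same route as the paper. You apply \cref{proposition:fibres-framed} to a local quiver with one loop at each of two vertices, which nilpotency forces to be zero, and with framing datum $2(\vect{i}_1+\vect{i}_2)$, where stability requires both framing pairs to be nonzero. The only difference is cosmetic: the paper identifies the quotient as the Segre quadric in $\mathbb{P}^3$ via the semi-invariants $ac,ad,bc,bd$, whereas you take the free $\Gm^2$-quotient of $(\mathbb{A}^2\setminus 0)^2$ directly.
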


\begin{proof}
  For the decomposition type~$\xi_1=(\vect{d}^1,\vect{d}^1)$
  we find the local quiver~$Q_{\xi_1}$
  with dimension vector~$\vect{i}_1+\vect{i}_2$ and framing datum~$2(\vect{i}_1+\vect{i}_2)$.
  We thus have to determine the moduli space of nilpotent semistable representations of the framing of the local quiver,
  for dimension vector~$\vect{i}_0+\vect{i}_1+\vect{i}_2$ and stability~$2\vect{i}_0^*-\vect{i}_1^*-\vect{i}_2^*$.
  Such a representation is of the form
  \begin{equation}
    \begin{tikzpicture}[node distance = 2.5cm]
      \node (a) {$\mathbb{C}$};
      \node (b) [right of = a] {$\mathbb{C}$};
      \node (c) at ($(a)!0.5!(b) + (0,1.1cm)$) {$\mathbb{C}$};
      \draw[->, loop left,  looseness = 20] (a) edge node [left]  {0} (a);
      \draw[->, loop right, looseness = 20] (b) edge node [right] {0} (b);
      \draw[->, bend right = 20] (c) edge node [left]  {$a$} (a);
      \draw[->, bend left  = 20] (c) edge node [right] {$b$} (a);
      \draw[->, bend left  = 20] (c) edge node [right] {$c$} (b);
      \draw[->, bend right = 20] (c) edge node [left]  {$d$} (b);
    \end{tikzpicture}
  \end{equation}
  with~$(a,b)\not=(0,0)\not=(c,d)$.
  We find the semi-invariants~$ac$,~$ad$,~$bc$,~$bd$ of weight~$\Theta$,
  thus the moduli space is the standard quadric in~$\mathbb{P}^3$,
  isomorphic to~$\mathbb{P}^1\times\mathbb{P}^1$.
\end{proof}

\begin{lemma}
  \label{lemma:second-type}
  For~$V\in S_{\xi_2}\subset\modulispace{2\vect{d}}\cong\mathbb{P}^2$,
  the fibre~$p^{-1}(V)$ is isomorphic to the product of a smooth conic with a reduced singular conic.
\end{lemma}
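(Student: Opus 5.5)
The plan is to apply \cref{proposition:fibres-framed} exactly as in the proof of \cref{lemma:first-fiber}: compute the local quiver~$Q_{\xi_2}$, the local dimension vector and the local framing datum, and then identify the framed moduli space of nilpotent representations directly.

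\textbf{Local quiver.} For~$\xi_2=(\vect{d}^1,\vect{e}_K^1,\vect{e}_{\overline{K}}^1)$, the local quiver has vertices~$i_1,i_2,i_3$, corresponding to~$\vect{d}$, $\vect{e}_K$ and~$\vect{e}_{\overline{K}}$. Its arrows are counted by~$\delta_{k,l}-\langle\cdot,\cdot\rangle_{Q^{(4)}}$. A direct computation with the Euler form of~$Q^{(4)}$ gives
\begin{equation}
  \langle\vect{d},\vect{d}\rangle=0,\quad
  \langle\vect{e}_K,\vect{e}_K\rangle=1,\quad
  \langle\vect{d},\vect{e}_K\rangle=\langle\vect{e}_K,\vect{d}\rangle=0,\quad
  \langle\vect{e}_K,\vect{e}_{\overline{K}}\rangle=-1 .
\end{equation}
Hence~$Q_{\xi_2}$ consists of:
\begin{itemize}
  \item a single loop at~$i_1$;
  \item no loops at~$i_2$ or~$i_3$;
  \item no arrows between~$i_1$ and the other two vertices;
  \item one arrow~$x\colon i_2\to i_3$ and one arrow~$y\colon i_3\to i_2$.
\end{itemize}
The local dimension vector is~$\vect{i}_1+\vect{i}_2+\vect{i}_3$. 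With framing datum~$\vect{n}=\vect{j}$, the local framing is~$2\vect{i}_1+\vect{i}_2+\vect{i}_3$.

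\textbf{Splitting the fibre.} Since~$Q_{\xi_2}$ is disconnected into the components~$\{i_1\}$ and~$\{i_2,i_3\}$, the framed nilpotent moduli space is the product of the two corresponding framed moduli spaces. The stability condition (images of the framing maps generate) and the nilpotency condition both split over connected components. This product decomposition is the step I would justify explicitly: the group, the representation space and the condition all factor.

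\textbf{First factor.} At~$i_1$ the loop acts on a one-dimensional space and must be nilpotent, hence zero. The datum is then a nonzero pair of framing scalars up to~$\mathbb{C}^*$. This gives~$\mathbb{P}^1$, the smooth conic.

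\textbf{Second factor.} This is the main computation. The data are framing scalars~$f,g$ at~$i_2,i_3$ and scalars~$x,y$, with the oriented cycle nilpotent, i.e.~$xy=0$. The torus~$(t_2,t_3)$ acts by~$f\mapsto t_2f$, $g\mapsto t_3g$, $x\mapsto t_3t_2^{-1}x$, $y\mapsto t_2t_3^{-1}y$. Stability requires~$i_2$ and~$i_3$ to be generated:
\begin{equation}
  \bigl(f\neq0\text{ or }(g\neq0,\,y\neq0)\bigr)
  \quad\text{and}\quad
  \bigl(g\neq0\text{ or }(f\neq0,\,x\neq0)\bigr).
\end{equation}
I would describe the quotient chart by chart:
\begin{itemize}
  \item On~$f,g\neq0$, normalizing~$f=g=1$ kills the torus and leaves the affine node~$\{xy=0\}\subset\mathbb{A}^2$.
  \item The locus~$g=0$ forces~$f,x\neq0$, and gives a single point compactifying the~$x$-axis.
  \item Symmetrically,~$f=0$ gives one point compactifying the~$y$-axis.
\end{itemize}
So the second factor is two copies of~$\mathbb{P}^1$ meeting transversally in one point, a reduced singular conic. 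Reducedness is visible on the node chart. To see the conic embedding, I would use the semi-invariants~$fg$, $xf^2$, $yg^2$ of the appropriate weight, which satisfy~$(xf^2)(yg^2)=0$ in~$\mathbb{P}^2$.

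The main obstacle is keeping track of the nilpotency relation~$xy=0$ scheme-theoretically, so that the fibre is reduced, together with checking that the stability description covers the two boundary points correctly. Everything else is the routine Euler form arithmetic above.
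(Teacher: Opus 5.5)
Your proposal is correct and is essentially the paper's proof: you get the same local quiver, local dimension vector and local framing $2\vect{i}_1+\vect{i}_2+\vect{i}_3$, and the same splitting into the $\mathbb{P}^1$ of framing directions at $i_1$ times the locus $xy=0$, which gives two copies of $\mathbb{P}^1$ glued at the point $x=y=0$. You also supply details the paper leaves implicit: the Euler form values, the chart-by-chart check, reducedness, and the conic embedding via $fg$, $xf^2$, $yg^2$. The paper instead simply identifies each component with the thin sincere framed moduli space of the acyclic triangle quiver.
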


\begin{proof}
  For the decomposition type~$\xi_2=(\vect{d}^1,\vect{e}_K^1,\vect{e}_{\overline{K}}^1)$
  we find the local quiver~$Q_{\xi_2}$
  with dimension vector~$\vect{i}_1+\vect{i}_2+\vect{i}_3$ and the framing datum~$2\vect{i}_1+\vect{i}_2+\vect{i}_3$.
  We thus have to determine the moduli space of nilpotent semistable representations of the framing of the local quiver,
  for dimension vector~$\vect{i}_0+\vect{i}_1+\vect{i}_2+\vect{i}_3$ and stability~$3\vect{i}_0^*-\vect{i}_1^*-\vect{i}_2^*-\vect{i}_3^*$.
  Such a representation is of the form
  \begin{equation}
    \begin{tikzpicture}[node distance = 1.5cm]
      \node (1)                {};
      \node (2) [right of = 1] {};
      \node (3) [right of = 2] {};
      \node (4) [above of = 2] {};
      \draw (1) circle (2pt) node [below] {$\mathbb{C}$};
      \draw (2) circle (2pt) node [below] {$\mathbb{C}$};
      \draw (3) circle (2pt) node [below] {$\mathbb{C}$};
      \draw (4) circle (2pt) node [above] {$\mathbb{C}$};
      \draw[->, loop left, looseness = 40] (1) edge node [left] {0} (1);
      \draw[->, bend right = 20] (4) edge node [above left] {$a$} (1);
      \draw[->, bend left = 20] (4) edge node [right] {$b$} (1);
      \draw[->] (4) edge node [right] {$c$} (2);
      \draw[->] (4) edge node [above right] {$d$} (3);
      \draw[->, bend left = 20] (2) edge node [above] {$e$} (3);
      \draw[->, bend left = 20] (3) edge node [below] {$f$} (2);
    \end{tikzpicture}
  \end{equation}
  with~$ef=0$.
  We see that the resulting moduli space is isomorphic to~$\mathbb{P}^1\times(\mathbb{P}^1\vee\mathbb{P}^1)$:
  the first factor parametrizes~$(a:b)$,
  the second factor is the union of two copies of the thin sincere moduli space for the acyclic triangle quiver,
  indexed by which factor of~$ef$ vanishes,
  intersecting where~$e=f=0$.
\end{proof}

\begin{lemma}
  \label{lemma:third-type}
  For~$V\in S_{\xi_3}\subset\modulispace{2\vect{d}}\cong\mathbb{P}^2$,
  the fibre~$p^{-1}(V)$ is isomorphic to the product of two reduced singular conics.
\end{lemma}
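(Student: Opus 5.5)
The plan is to argue exactly as in \cref{lemma:first-fiber,lemma:second-type}: use \cref{proposition:fibres-framed} to reduce the fibre to a framed moduli space of nilpotent representations of the local quiver $Q_{\xi_3}$. We then observe that this local quiver is a disjoint union of two identical pieces, so the fibre is a product of two copies of a small explicit moduli space.

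\emph{Step 1: the local quiver and framing.} For $\xi_3=(\vect{e}_K^1,\vect{e}_{\overline{K}}^1,\vect{e}_L^1,\vect{e}_{\overline{L}}^1)$ we compute the Euler form of $Q^{(4)}$ on the summands.
\begin{itemize}
  \item Diagonal terms: $\langle\vect{e}_K,\vect{e}_K\rangle=3-2=1$, so no loops.
  \item Complementary pairs: $\langle\vect{e}_K,\vect{e}_{\overline{K}}\rangle=1-2=-1$, so one arrow in each direction between $K$ and $\overline{K}$, and likewise between $L$ and $\overline{L}$.
  \item Pairs with $|K\cap L|=1$ (and the analogous pairs with complements): $\langle\vect{e}_K,\vect{e}_L\rangle=(1+1)-2=0$, so no arrows.
\end{itemize}
Hence $Q_{\xi_3}$ is the disjoint union of two $2$-cycles $K\rightleftarrows\overline{K}$ and $L\rightleftarrows\overline{L}$. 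The local dimension vector is $1$ at every vertex, and the local framing datum is $\vect{j}\cdot\vect{e}_K=1$ at each of the four vertices.

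\emph{Step 2: splitting into a product.} A point of $\modulispace[0\semistable,\nilpotent]{Q_{\xi_3},\vect{d}_{\xi_3},\vect{n}_{\xi_3}}$ is given by the following data:
\begin{itemize}
  \item framing scalars $a,b,a',b'$ (one for each vertex);
  \item arrow scalars $e,f$ on the first $2$-cycle and $e',f'$ on the second;
  \item the nilpotency relations $ef=0$ and $e'f'=0$;
  \item the stability condition that the images of the framing maps generate the representation.
\end{itemize}
Generation decomposes componentwise because the quiver is a disjoint union. The group $\GL_1^4$ also splits as $\GL_1^2\times\GL_1^2$ acting separately on the two pieces. Therefore the stable locus is a product and the quotient is the product of two copies of the moduli space $Y$ for one $2$-cycle with framing $1$ at each vertex. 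I would make this rigorous by noting that the semi-invariant ring for the product stability is the Segre-type product of the two factors' rings.

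\emph{Step 3: identifying $Y$.} This is the same computation as for the second factor in \cref{lemma:second-type}. On the component $e=0$, stability requires $b\neq0$ together with $(a,f)\neq(0,0)$. Normalizing $b=1$ with the second torus factor, the remaining $\GL_1$ scales $(a,f)$, so this component is $\mathbb{P}^1$. Symmetrically, the component $f=0$ is a $\mathbb{P}^1$. The two components meet in the single point $e=f=0$, $a,b\neq0$. To see that $Y$ is reduced, I would write down the invariants, for instance $ab$, $af\cdot a$, $be\cdot b$ in suitable weights. These exhibit $Y$ as a pair of lines meeting transversally in a plane, i.e.\ a reduced singular conic.

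The main obstacle is this last point. Scheme-theoretically, the fibre is Proj of the semi-invariants of the coordinate ring of the nilpotent locus. One must check that the relation $ef=0$ does not produce embedded or nonreduced structure in the quotient. Since $ef$ is itself a torus invariant of weight zero and the stable locus is reduced (a union of coordinate subspaces), I expect this to go through. The first thing I would verify is that the semi-invariant ring is generated by the monomials above with the single relation $xy=0$.
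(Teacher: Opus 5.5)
Your proposal is correct and follows the paper's own argument: the paper identifies $Q_{\xi_3}$ as two disjoint $2$-cycles with local dimension and framing $1$ at every vertex, and concludes $(\mathbb{P}^1\vee\mathbb{P}^1)^2$ ``similarly to the previous case''; your Euler-form computation, product splitting and orbit analysis supply the details it leaves implicit. One slip in your last step: with your conventions ($e\colon 1\to 2$, $f\colon 2\to 1$, framing scalars $a$ at vertex~$1$ and $b$ at vertex~$2$), the degree-one semi-invariants are $ab$, $a^2e$, $b^2f$, not $a^2f$, $b^2e$; these give the ring $\mathbb{C}[x,y,z]/(yz)$, and reducedness is then automatic, since this ring is a subring of the reduced ring $\mathbb{C}[a,b,e,f]/(ef)$.
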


\begin{proof}
  For the decomposition type~$\xi_3=(\vect{e}_K^1,\vect{e}_{\overline{K}}^1,\vect{e}_L^1,\vect{e}_{\overline{L}}^1)$
  we find the local quiver~$Q_{\xi_3}$
  with dimension vector~$\vect{i}_1+\vect{i}_2+\vect{i}_3+\vect{i}_4$
  and framing datum~$\vect{i}_1+\vect{i}_2+\vect{i}_3+\vect{i}_4$.
  We thus have to determine the moduli space of nilpotent semistable representations for the framing of the local quiver
  \begin{equation}
    \begin{tikzpicture}[node distance = 1.5cm]
      \node (1)                {};
      \node (2) [right of = 1] {};
      \node (3) [right of = 2] {};
      \node (4) [right of = 3] {};
      \node (0) at ($(2)!0.5!(3)+(0,1.2)$) {};
      \draw (0) circle (2pt) node [above] {0};
      \draw (1) circle (2pt) node [below] {1};
      \draw (2) circle (2pt) node [below] {2};
      \draw (3) circle (2pt) node [below] {3};
      \draw (4) circle (2pt) node [below] {4};
      \draw[->, bend left = 20] (1) edge (2);
      \draw[->, bend left = 20] (2) edge (1);
      \draw[->, bend left = 20] (3) edge (4);
      \draw[->, bend left = 20] (4) edge (3);
      \draw[->] (0) edge (1);
      \draw[->] (0) edge (2);
      \draw[->] (0) edge (3);
      \draw[->] (0) edge (4);
    \end{tikzpicture}
  \end{equation}
  for the dimension vector~$\vect{i}_0+\vect{i}_1+\vect{i}_2+\vect{i}_3+\vect{i}_4$ and stability~$4\vect{i}_0^*-\vect{i}_1^*-\vect{i}_2^*-\vect{i}_3^*-\vect{i}_4^*$. Similarly to the previous case, we find that the resulting moduli space is isomorphic to~$(\mathbb{P}^1\vee\mathbb{P}^1)^2$.
\end{proof}

\begin{lemma}
  \label{lemma:fourth-type}
  For~$V\in S_{\xi_4}\subset\modulispace{2\vect{d}}\cong\mathbb{P}^2$,
  the fibre~$p^{-1}(V)$ is isomorphic to the nodal quadric surface.
\end{lemma}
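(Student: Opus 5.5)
The plan is to apply \cref{proposition:fibres-framed} exactly as in the previous three lemmas, and then identify the resulting framed nilpotent moduli space by writing down its semi-invariants explicitly.

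\textbf{Local data.} For $\xi_4=(\vect{d}^2)$ with $\vect{d}=(1,1,1,1;2)$, we have $\langle\vect{d},\vect{d}\rangle=8-8=0$. The local quiver $Q_{\xi_4}$ therefore has a single vertex $i_1$ with $1-\langle\vect{d},\vect{d}\rangle=1$ loop, i.e.\ it is the Jordan quiver. The local dimension vector is $2\vect{i}_1$. The framing datum of $\QM{1}$ as a framed moduli space is $\vect{j}$, so the local framing is $(\vect{j}\cdot\vect{d})\vect{i}_1=2\vect{i}_1$.

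By \cref{proposition:fibres-framed}, $p^{-1}(V)$ is the moduli space of pairs $(A,f)$ up to $\GL_2$, where:
\begin{itemize}
  \item $A\in\mathrm{M}_2(\mathbb{C})$ is nilpotent and $f=(f_1,f_2)\colon\mathbb{C}^2\to\mathbb{C}^2$;
  \item stability means that the images of $f$ generate $\mathbb{C}^2$ as a $\mathbb{C}[A]$-module;
  \item $g\in\GL_2$ acts by $(gAg^{-1},gf)$.
\end{itemize}
Since this is the framed moduli space $\modulispace[0\semistable,\nilpotent]{Q_{\xi_4},\vect{d}_{\xi_4},\vect{n}_{\xi_4}}$, the relevant semi-invariants are those of weight $\det^{-1}$ (up to sign convention), in degree one in the framing.

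\textbf{Semi-invariants and the quadric.} I would use the semi-invariants
\begin{equation}
  x=\det(f_1,f_2),\qquad y_{kl}=\det(f_k,Af_l)\quad\text{for } k,l\in\{1,2\}.
\end{equation}
The identity $\det(u,Av)+\det(Au,v)=\operatorname{tr}(A)\det(u,v)$ together with $\operatorname{tr}A=0$ gives $y_{12}=y_{21}$. The symmetric matrix $(y_{kl})$ equals $f^{\mathsf T}JAf$, where $J$ is the standard symplectic form. Hence
\begin{equation}
  y_{11}y_{22}-y_{12}^2=x^2\det(JA)=x^2\det A=0.
\end{equation}
So we get a morphism from the fibre to the quadric cone $\{y_{11}y_{22}=y_{12}^2\}\subset\mathbb{P}^3$, whose vertex is $(1:0:0:0)$. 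This is exactly the nodal quadric surface.

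\textbf{Bijectivity.} Next I would check that this morphism is well defined and bijective onto the cone, splitting by the rank of $f$.
\begin{itemize}
  \item \emph{Rank two.} If $f$ has rank two, normalise $f=\mathrm{id}$; the stabiliser is then trivial. The remaining data is a point $A$ of the nilpotent cone $\{\operatorname{tr}=\det=0\}$, which is the affine chart $x\neq0$ of the quadric cone, with $A=0$ corresponding to the vertex.
  \item \emph{Rank one.} If $f$ has rank one, stability forces $A\neq0$ with $\operatorname{im}f=L$ not equal to $\ker A$, so $(L,AL)$ is a basis. Normalising, the remaining parameter is the ratio of the two columns of $f$. This gives a $\mathbb{P}^1$, mapping isomorphically onto the conic at infinity $\{x=0\}$ of the cone.
  \item \emph{Rank zero} is excluded by stability.
\end{itemize}
This shows that every point of the cone is hit exactly once and that no semistable point maps to $0$, so the morphism is well defined and bijective.

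\textbf{Main obstacle.} The hardest step is upgrading this bijection to an isomorphism. I would argue as follows. The framed moduli space is smooth-free-of-issues only away from nilpotency, so instead I would show directly that the semi-invariant ring in degree $\ge1$ is generated by $x,y_{kl}$. Alternatively, I would show that the map is a closed embedding locally on the two charts computed above. On the chart $x\neq0$ the fibre is literally the reduced nilpotent cone, a normal surface. On the other chart, a local slice computation identifies a neighbourhood of the conic with a smooth surface. Since the target quadric cone is normal, Zariski's main theorem then turns the bijective birational morphism into an isomorphism.
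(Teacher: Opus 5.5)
Your proposal is correct and is essentially the paper's argument. The paper embeds the fibre into $\Gr(2,4)$ via the row space of $[v\mid w\mid Av\mid Aw]$, whose Plücker coordinates $D_{12},D_{13},D_{14},D_{24}$ are exactly your $x,y_{11},y_{12},y_{22}$, and it obtains $D_{34}=0$, $D_{14}=D_{23}$ and $D_{13}D_{24}=D_{14}^2$ from nilpotency and the Plücker relation, where you use $\operatorname{tr}A=0$ and $\det(f^{\mathsf T}JAf)=x^2\det A$; your rank-by-rank bijectivity check and the Zariski main theorem step make explicit what the paper leaves implicit in the word ``embed'', namely injectivity and that the image is the whole cone.
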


\begin{proof}
  For the decomposition type~$\xi_4=(\vect{d}^2)$
  we find the local quiver~$Q_{\xi_4}$
  with dimension vector~$2\vect{i}_1$ and framing datum~$2\vect{i}_1$.
  We thus have to determine the moduli space of nilpotent semistable representations of the framing of the local quiver,
  for dimension vector~$\vect{i}_0+2\vect{i}_1$ and stability~$2\vect{i}_0^*-\vect{i}_1^*$.
  Such a representation is of the form
  \begin{equation}
    \begin{tikzpicture}[node distance = 1.5cm]
      \node (1)                {$\mathbb{C}^2$};
      \node (2) [above of = 1] {$\mathbb{C}$};
      \draw[->, loop left, looseness = 10]  (1) edge node [left] {$A$} (1);
      \draw[->, bend right = 20] (2) edge node [left]  {$v$} (1);
      \draw[->, bend left  = 20] (2) edge node [right] {$w$} (1);
    \end{tikzpicture}
  \end{equation}
  where~$A$ is a nilpotent~$2\times 2$-matrix,
  and~$\mathbb{C}^2$ is generated by~$v,w,Av,Aw$.
  We can thus embed the moduli space into the Grassmannian~$\Gr(2,4)$ by taking the row space of the~$2\times 4$-matrix
  \begin{equation}
    [v\mid w \mid Av\mid Aw].
  \end{equation}
  Composing this embedding with the Pl\"ucker embedding, with maximal minors~$D_{ij}$,
  its image is defined by the equations~$D_{34}=0$,~$D_{14}=D_{23}$, and the Pl\"ucker relation~$D_{13}D_{24}=D_{12}D_{34}+D_{14}D_{23}$.
  Eliminating~$D_{34}$ and~$D_{23}$, we see that the moduli space is given by homogeneous coordinates~$(D_{12}:D_{13}:D_{14}:D_{24})$
  subject to~$D_{13}D_{24}=D_{14}^2$,
  thus it is isomorphic to the nodal quadric surface.
\end{proof}

\begin{lemma}
  \label{lemma:fifth-type}
  For~$V\in S_{\xi_5}\subset\modulispace{2\vect{d}}\cong\mathbb{P}^2$,
  the fibre~$p^{-1}(V)$ is isomorphic to the union of two copies of the second Hirzebruch surface~$\mathbb{F}_2$,
  with each~$(-2)$-curve glued to a fibre of the other surface.
\end{lemma}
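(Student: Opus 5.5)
The plan is to proceed exactly as in \cref{lemma:first-fiber,lemma:second-type,lemma:third-type,lemma:fourth-type}, using \cref{proposition:fibres-framed}. First I compute the local data for $\xi_5=(\vect{e}_K^2,\vect{e}_{\overline{K}}^2)$. One finds $\langle\vect{e}_K,\vect{e}_K\rangle=3-2=1$, so there are no loops. One also finds $\langle\vect{e}_K,\vect{e}_{\overline{K}}\rangle=1-2=-1$, so there is one arrow in each direction. Hence $Q_{\xi_5}$ is the cyclic quiver with two vertices $1,2$ and arrows $A\colon 1\to2$, $B\colon 2\to 1$. The local dimension vector is $2\vect{i}_1+2\vect{i}_2$. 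The framing datum is $\vect{n}_{\xi_5}=\vect{i}_1+\vect{i}_2$, because $\vect{j}\cdot\vect{e}_K=1$. The fibre is then the moduli space of tuples $(A,B,v,w)$ modulo $\GL_2\times\GL_2$. Here $A,B$ are $2\times2$ matrices with $BA$ (equivalently $AB$) nilpotent, and $v\in W_1=\mathbb{C}^2$, $w\in W_2=\mathbb{C}^2$. The stability condition is that $v,w$ generate $W_1\oplus W_2$ under $A,B$.

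Next I stratify by how the generators act. Nilpotency of the $4$-dimensional module forces $(AB)^2=0=(BA)^2$. The two generic cases come from a single cyclic chain of length four. In the first case $v,Av,BAv,ABAv$ is a basis, and then $w\in W_2=\langle Av,ABAv\rangle$ is free, while $BABAv=0$ and $A,B$ are determined. Since the stabilizer is trivial once the basis is fixed, this gives an open cell $\mathbb{A}^2$ with coordinates $w=xAv+yABAv$. The second case is symmetric, with the roles of $(v,A,W_1)$ and $(w,B,W_2)$ interchanged. This shows the fibre is a surface with exactly two irreducible components $Y_1,Y_2$, the closures of these cells, interchanged by the symmetry. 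The remaining strata, where neither $v$ nor $w$ generates a chain of length four, are lower-dimensional and must lie in $Y_1\cup Y_2$; I will list them by the ranks of $A$, $B$ and $AB$.

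To identify $Y_1\cong\mathbb{F}_2$, I would use projective coordinates as in \cref{lemma:fourth-type}. On $Y_1$, the datum $W_1=\langle v,BAv\rangle$, together with $A|_{W_1}$ and the position of $w$, gives a map to $\mathbb{P}^1$ recording the line spanned by $w$ in a suitable two-dimensional quotient. Concretely, the map is $(x:y)$ on the cell, extended over the boundary. The fibres of this map are $\mathbb{P}^1$'s. The boundary curve where $w$ degenerates into the $A$-image of the chain, i.e.\ where $w$ no longer contributes independently, forms a section. I would compute its normal bundle via the weights of the residual $\Gm$-action: scaling the chain by $t$ acts with weights $1$ and $2$ on $(x,y)$. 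This suggests that $Y_1$ is the minimal resolution of $\mathbb{P}(1,1,2)$, namely $\mathbb{F}_2$, with the $(-2)$-curve being the exceptional curve over the singular point. Equivalently, one can see $Y_1$ as a blowup of the nodal quadric appearing in \cref{lemma:fourth-type}.

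The final step, and the one I expect to be the main obstacle, is to identify the intersection $Y_1\cap Y_2$ and show it is reduced and consists of two curves: the $(-2)$-curve of $Y_1$ glued to a fibre of $Y_2$, and vice versa. I would first parametrize the boundary stratum where both $v$ and $w$ contribute to generation, e.g.\ $W_1=\langle v,Bw\rangle$ and $W_2=\langle w,Av\rangle$ with $AB$ and $BA$ of rank at most one. I would check that it splits into two $\mathbb{P}^1$-families, according to which of $ABw$ or $BAv$ vanishes, in analogy with the $ef=0$ dichotomy in \cref{lemma:second-type}. I would then match each family with the $(-2)$-curve on one side and a ruling on the other. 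For reducedness, a local computation at a general point of the intersection should exhibit a normal crossing $uv=0$. As a consistency check, the Euler characteristic should be $4+4-2-2+1=5$ if the two curves meet in one point, and this should agree with the flatness of $p$ and the Betti numbers of $\QM{1}$.
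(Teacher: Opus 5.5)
Your local data (the two-cycle $Q_{\xi_5}$ with arrows $A,B$, $\vect{d}_{\xi_5}=2\vect{i}_1+2\vect{i}_2$, framing $\vect{i}_1+\vect{i}_2$) are correct. So are the two open cells $U_1,U_2\cong\mathbb{A}^2$, which are disjoint because $\det(A)\det(B)=\det(BA)=0$. Your dichotomy $ABw=0$ versus $BAv=0$ on the remaining locus is also the right one.

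The gap is the central step $Y_1\cong\mathbb{F}_2$. The ruling you propose, $(x:y)$, is the line spanned by $w$ in $W_2=\langle Av,ABAv\rangle$. It is undefined at $x=y=0$, which is a genuine point of the fibre ($w=0$, and $v$ generates on its own). So it cannot extend to a morphism on $Y_1$, and it is not the ruling. The actual ruling is $x$ alone: it records the position of $Bw=xBAv$ relative to the basis $v,BAv$ of $W_1$, and this makes sense on the whole closure.

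The $\Gm$-weights $(1,2)$ on the cell only ``suggest'' the answer. Many compactifications, such as $\mathbb{P}^2$, $\mathbb{P}(1,1,2)$ and $\mathbb{F}_2$, contain such an equivariant chart. Nothing in your argument controls the boundary of $U_1$ or rules out singularities of $Y_1$ along it. Without global coordinates on the closure you cannot read off the $(-2)$-curve or the fibres, so your final gluing step has nothing to attach to.

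What is missing is exactly such a global description. Your closing remark that $Y_1$ should be a blow-up of the nodal quadric of \cref{lemma:fourth-type} is the right idea, and the paper makes it concrete. It embeds the fibre into $\Gr(2,4)\times\Gr(2,4)$ via the row spaces of $D=[v\mid Bw\mid BAv\mid BABw]$ and $E=[w\mid Av\mid ABw\mid ABAv]$; generation is equivalent to both having rank two.

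On your cell one finds $(E_{12}:E_{13}:E_{14}:E_{24})=(-y:x^2:x:1)$, an affine chart of the nodal quadric $E_{14}^2=E_{13}E_{24}$. One also finds $(D_{12}:D_{13})=(x:1)$, which is the projection from the vertex of that cone. Hence the closure of $U_1$ is the closure of the graph of this projection, i.e. the blow-up of the cone at its vertex, which is $\mathbb{F}_2$.

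The gluing is then immediate. The $(-2)$-curve of $Y_1$ is the locus where the last two columns of $E$ vanish ($ABw=ABAv=0$), i.e. your family with $ABw=0$. The fibre of $Y_1$ over $x=\infty$ is where the last two columns of $D$ vanish ($BAv=BABw=0$), i.e. your family with $BAv=0$; by symmetry this is the $(-2)$-curve of $Y_2$.

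Two smaller points. First, that the lower-dimensional strata lie in $Y_1\cup Y_2$ needs an argument, for instance that every component of a fibre of the surjection $p$ from the irreducible fourfold has dimension at least $2$. Second, flatness of $p$ does not force the Euler characteristic of the fibres to be constant (the smooth fibres have $\chi=4$), so your count $\chi=5$ is not a check against flatness.
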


\begin{proof}
  For the decomposition type~$\xi_5=(\vect{e}_K^2,\vect{e}_{\overline{K}}^2)$
  we find the local quiver~$Q_{\xi_5}$
  with dimension vector~$2(\vect{i}_1+\vect{i}_2)$ and framing datum~$\vect{i}_1+\vect{i}_2$.
  We thus have to determine the moduli space of nilpotent semistable representations of the framing of the local quiver,
  for dimension vector~$\vect{i}_0+2(\vect{i}_1+\vect{i}_2)$ and stability~$4\vect{i}_0^*-\vect{i}_1^*-\vect{i}_2^*$.
  Such a representation is given as follows:
  \begin{equation}
    \begin{tikzpicture}[node distance = 2.5cm]
      \node (a) {$\mathbb{C}^2$};
      \node (b) [right of = a] {$\mathbb{C}^2$};
      \node (c) at ($(a)!0.5!(b) + (0,1.1cm)$) {$\mathbb{C}$};
      \draw[->, bend left = 20] (a) edge node [above] {$A$} (b);
      \draw[->, bend left = 20] (b) edge node [below] {$B$} (a);
      \draw[->] (c) edge node [left]  {$v$} (a);
      \draw[->] (c) edge node [right] {$w$} (b);
    \end{tikzpicture}
  \end{equation}
  for vectors~$v,w\in\mathbb{C}^2$ and~$2\times 2$-matrices~$A,B$ such that~$AB$ is nilpotent,
  and such that the~$2\times 4$-matrices
  \begin{equation}
    D=[v\mid Bw\mid BAv\mid BABw],\; E=[w\mid Av\mid ABw\mid ABAv]
  \end{equation}
  have rank two.
  We can thus embed the moduli space into~$\Gr(2,4)\times\Gr(2,4)$,
  similar to \cref{lemma:fourth-type}, and we have to determine equations
  on the respective Pl\"ucker coordinates~$D_{ij},E_{ij}$ for~$1\leq i<j\leq 4$ describing the image of the embedding.
  First, we see that~$D_{34}=0=E_{34}$ since~$AB$ and~$BA$ are nilpotent.
  Next, the following are verified by direct computations:
  \begin{equation}
    D_{23}=D_{14},\; D_{14}^2=D_{13}D_{24},\; D_{14}=\det(B)E_{12},\; D_{24}=\det(B)E_{13},
  \end{equation}
  and
  \begin{equation}
    E_{23}=E_{14},\; E_{14}^2=E_{13}E_{24},\; E_{14}=\det(A)D_{12},\; E_{24}=\det(A)D_{13}.
  \end{equation}
  We can thus eliminate the variables~$D_{23}, D_{34}, E_{23}, E_{34}$.
  Since~$\det(A)\det(B)=0$, we find the following relations, which in fact define the image of the embedding:
  \begin{equation}
    \begin{gathered}
      D_{14}E_{14}=D_{14}E_{24}=D_{24}E_{14}=D_{24}E_{24}=0, \\
      D_{13}E_{14}=D_{12}E_{24},\; D_{14}E_{13}=D_{24}E_{12}, \\
      D_{13}E_{13}=D_{12}E_{14}+D_{14}E_{12}.
    \end{gathered}
  \end{equation}
  We see that either~$D_{14}=0=D_{24}$ or~$E_{14}=0=E_{24}$,
  thus the fibre has two irreducible components~$S_1,S_2$,
  intersecting in the locus~$C'$ where all four variables are zero,
  which consists of two copies of a projective line intersecting in a single point,
  that is,~$C'=\mathbb{P}^1\vee\mathbb{P}^1$.
  Considering the component~$S_1$ defined by~$D_{14}=0=D_{24}$,
  we see as before that the variables~$E_{ij}$ define the nodal quadric surface.
  The additional relations involving~$D_{12}, D_{13}$ show that~$S_1$ is the blow-up of this cone at the vertex,
  and thus isomorphic to the second Hirzebruch surface~$\mathbb{F}_2$, and similarly for~$S_2$,
  and we see that each~$(-2)$-curve is glued to a fibre of the other surface.
\end{proof}

\subsection{Involution surface bundle structure}
From the discussion in \cref{subsection:fibre-types}
we see that the morphism~$p\colon\QM{1}\to\mathbb{P}^2$
looks generically like a quadric surface bundle,
but with singular fibres which are not merely singular quadric surfaces.
Recently, Kresch--Tschinkel studied such fibrations,
we recall their \cite[Definition~1]{MR4159824}.
\begin{definition}
  \label{definition:involution-surface-bundle}
  Let~$S$ be a regular scheme,
  for which~2 is invertible in the local rings.
  An \emph{involution surface bundle} is
  a flat projective morphism~$p\colon X\to S$ for which
  \begin{itemize}
    \item the locus~$U\subseteq S$ over which~$p$ is smooth is Zariski-dense,
    \item the fibre~$p^{-1}(s)$ for~$s\in U$ is an involution surface,
      i.e., a minimal del Pezzo surface of degree~8.
  \end{itemize}
  We say it has \emph{mild degeneration} if every singular fibre is geometrically isomorphic to
  one of the following reduced schemes:
  \begin{description}
    \item[type I]
      a nodal quadric surface
    \item[type II]
      the product of two reduced singular conics
    \item[type III]
      the union of two copies of the second Hirzebruch surface~$\mathbb{F}_2$,
      with each~$(-2)$-curve glued to a fibre of the other surface
    \item[type IV]
      the product of a smooth conic with a reduced singular conic.
  \end{description}
\end{definition}
We will prove the following theorem.
\begin{theorem}
  \label{theorem:involution-surface-bundle}
  The morphism~$p\colon\QM{1}\to\mathbb{P}^2$ from \eqref{equation:involution-surface-bundle}
  is an involution surface bundle which has mild degeneration.
  It degenerates over a configuration of three lines
  and a smooth conic tangent to each of them,
  depicted in \cref{figure:discriminant-locus}:
  \begin{itemize}
    \item over the three lines, the (general) fibre is of type~IV, and
    \item over the conic, the (general) fibre is of type~I;
    \item over the three vertices of the triangle formed by the lines,
      the fibre is of type~II, and
    \item over the three points where the conic touches the lines,
      the fibre is of type~III.
  \end{itemize}
\end{theorem}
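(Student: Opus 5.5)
The proof amounts to matching the Luna stratification of the base $\modulispace{2\vect{d}}\cong\Sym^2\mathbb{P}^1\cong\mathbb{P}^2$ with the fibre types already computed in \cref{subsection:fibre-types}. The first step is to make the isomorphism $\modulispace{2\vect{d}}\cong\Sym^2\modulispace{\vect{d}}$ explicit: a $\Theta_\can$-polystable representation of dimension vector $2\vect{d}$ is sent to the unordered pair of points of $\modulispace{\vect{d}}\cong\mathbb{P}^1$ given by its two summands of dimension vector $\vect{d}$.

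This requires describing $\modulispace{\vect{d}}\cong\mathbb{P}^1$ (via the cross-ratio) in terms of Luna strata. The open part $\mathbb{P}^1\setminus\{0,1,\infty\}$ consists of stable representations. Each of the three special points $x_{\{K,\overline{K}\}}$ is the polystable representation $E_K\oplus E_{\overline{K}}$, where $E_K$ is the unique stable representation of dimension vector $\vect{e}_K$. There are exactly three such points, corresponding to the three ways of splitting $\{1,2,3,4\}$ into two pairs, where two of the four points collide pairwise. A point $\{x,y\}\in\Sym^2\mathbb{P}^1$ then has polystable type
\begin{itemize}
  \item $\xi_1$ if $x\neq y$ and both are non-special;
  \item $\xi_2$ if exactly one of them is special;
  \item $\xi_3$ if $x\neq y$ are both special;
  \item $\xi_4$ if $x=y$ is non-special;
  \item $\xi_5$ if $x=y$ is special.
\end{itemize}
For $\xi_3$ one checks that $|K\cap L|=1$ holds automatically for two distinct pair-partitions.

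Next comes the projective geometry of $\Sym^2\mathbb{P}^1=\mathbb{P}(\Sym^2\mathbb{C}^2)\cong\mathbb{P}^2$. The diagonal $\{2x\}$ is a smooth conic. For fixed $x$, the set $\{x+y\mid y\in\mathbb{P}^1\}$ is a line, namely the binary quadrics divisible by a fixed linear form, and it meets the conic only at $2x$, hence is tangent there. The three special points therefore give three lines tangent to the conic at the three points $2x_{\{K,\overline{K}\}}$. These lines pairwise meet at the three vertices $x_{\{K,\overline{K}\}}+x_{\{L,\overline{L}\}}$, and they are not concurrent because no point lies on three such lines. Combining with \cref{lemma:first-fiber,lemma:second-type,lemma:third-type,lemma:fourth-type,lemma:fifth-type}, the fibre types are:
\begin{itemize}
  \item a smooth quadric $\mathbb{P}^1\times\mathbb{P}^1$ (a minimal del Pezzo surface of degree~8) on the open complement;
  \item type~IV on the lines minus the vertices and tangency points;
  \item type~II at the vertices;
  \item type~I on the conic minus the tangency points;
  \item type~III at the tangency points.
\end{itemize}

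For the involution surface bundle structure, flatness was already obtained via miracle flatness, since all fibres are two-dimensional and $\QM{1}$ and $\mathbb{P}^2$ are smooth. Projectivity follows from properness of the framing map. The smooth locus contains the dense open stratum $S_{\xi_1}$, where the fibres are smooth quadrics. Conversely, each fibre over the discriminant is singular by the explicit descriptions in the lemmas, so the discriminant is exactly the union of the conic and the three lines.

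I expect the main obstacle to be scheme-theoretic. \Cref{proposition:fibres-framed} identifies fibres with nilpotent framed moduli spaces, but mild degeneration requires the scheme-theoretic fibres to be the \emph{reduced} schemes of types~I--IV, not merely to have them as underlying varieties. I would argue reducedness via flatness: the fibres are Cohen--Macaulay as fibres of a flat morphism between smooth varieties, and the generically smooth components seen in the local-quiver analysis give generic reducedness, so Serre's criterion (generically reduced plus $S_1$) yields reducedness. If the quiver computation is only set-theoretic, I would compare degrees against the smooth fibre, using the anticanonical degree $8$ of $\mathbb{P}^1\times\mathbb{P}^1$. A secondary point is the compatibility of the isomorphism $\modulispace{2\vect{d}}\cong\Sym^2\mathbb{P}^1$ with the identification of the three special points. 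This can be checked directly, because the degenerate cross-ratios are exactly the polystable non-stable points.
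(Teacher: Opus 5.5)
Your proposal is correct and follows essentially the same route as the paper. You identify $\modulispace{2\vect{d}}$ with $\Sym^2\mathbb{P}^1$ and read off the Luna strata: pairs containing a degenerate cross-ratio point give three lines, the diagonal gives a conic tangent to each line at the doubled degenerate point, and pairs of distinct degenerate points give the vertices. You then invoke the fibre lemmas together with flatness from miracle flatness, exactly as the paper does. Your extra Cohen--Macaulay plus generic-reducedness argument for reducedness of the scheme-theoretic fibres addresses a point the paper leaves implicit, by treating \cref{proposition:fibres-framed} as an isomorphism of schemes. It is a harmless strengthening rather than a different approach.
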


\begin{proof}
  The morphism~$p$ is projective and flat,
  by the discussion following \cref{lemma:qm-1-projection},
  and by \cref{lemma:first-fiber} its general fibre is a smooth quadric surface,
  which is an involution surface.
  It remains to describe the Luna stratification of~$\modulispace{2\vect{d}}\cong\mathbb{P}^2$.
  Under the isomorphism~$\modulispace{2\vect{d}}\cong\Sym^2\modulispace{\vect{d}}$,
  a polystable representation corresponds to an unordered pair of points
  of~$\modulispace{\vect{d}}\cong\mathbb{P}^1$,
  where the three points~$0,1,\infty$, at which the cross-ratio degenerates,
  correspond to the polystable representations~$\vect{e}_K\oplus\vect{e}_{\overline{K}}$.
  The stratum~$S_{\xi_2}$ consists of the pairs containing one fixed degenerate point,
  so its closure is a line in~$\mathbb{P}^2$, and we obtain three such lines;
  the three points of the strata of type~$\xi_3$,
  the pairs of two distinct degenerate points,
  are the vertices of the triangle formed by these lines.
  The stratum~$S_{\xi_4}$ is the diagonal,
  which gives a smooth conic.
  This conic meets the line of pairs containing a fixed degenerate point
  in the single point given by the pair of type~$\xi_5$,
  so it is tangent to each of the three lines,
  with the three tangency points being the strata of type~$\xi_5$.
  The fibre types over these strata are determined in
  \cref{lemma:second-type,lemma:third-type,lemma:fourth-type,lemma:fifth-type}.
\end{proof}

\begin{figure}[t]
  \centering
  \begin{tikzpicture}[scale=0.75, font=\small, line width=1.1pt]
    \draw (-3.2,-1.3) -- (3.2,-1.3)    node[right] {IV};
    \draw (2.66,-2.01) -- (-0.41,3.31) node[above] {IV};
    \draw (-2.66,-2.01) -- (0.41,3.31) node[above] {IV};
    \draw[red] (0,0) circle (1.3);
    \node[red] at (0,1.65) {I};
    \filldraw (0,2.6)      circle (2pt) node[right]       {II};
    \filldraw (-2.25,-1.3) circle (2pt) node[below right] {II};
    \filldraw (2.25,-1.3)  circle (2pt) node[below left]  {II};
    \filldraw (0,-1.3)     circle (2pt) node[below right] {III};
    \filldraw (1.13,0.65)  circle (2pt) node[right]       {III};
    \filldraw (-1.13,0.65) circle (2pt) node[left]        {III};
  \end{tikzpicture}
  \caption{The discriminant locus of the involution surface bundle~$p\colon\QM{1}\to\mathbb{P}^2$
  from \eqref{equation:involution-surface-bundle}, described in \cref{theorem:involution-surface-bundle};
  the labels indicate the fibre type, in the sense of \cref{definition:involution-surface-bundle},
  over the general point of each stratum.}
  \label{figure:discriminant-locus}
\end{figure}

\section{The variety \QM{2}: Segre cousin}
\label{section:QM2}
This moduli space has Betti numbers~$(1,6,17,6,1)$;
our aim is to prove in \cref{subsection:segre-cousin}
that it is isomorphic to Manivel's 4-dimensional Segre cousin \cite{MR4739832}.

\subsection{Projections to the wall}
Besides the canonical stability parameter
\begin{equation}
  \Theta=\Theta_\can=3\sum_{k=1}^6\vect{i}_k^*-7\vect{j}^*,
\end{equation}
we also consider
\begin{equation}
  \begin{aligned}
    \overline{\Theta}_1&=\sum_{k=1}^5\vect{i}_k^*+2\vect{i}_6^*-3\vect{j}^* \\
    \overline{\Theta}_2&=\sum_{k=1}^52\vect{i}_k^*+\vect{i}_6^*-4\vect{j}^*.
  \end{aligned}
\end{equation}
We will apply the tools from \cref{subsection:projection-to-wall} to obtain two morphisms between quiver moduli.
\begin{lemma}
  The stability parameter~$\Theta$ does not belong to a proper wall.
  Moreover,~$\overline{\Theta}_1$ and~$\overline{\Theta}_2$ belong to the closure of the chamber containing~$\Theta$.
\end{lemma}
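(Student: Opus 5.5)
The plan is to reduce everything to an explicit enumeration of subdimension vectors, which is small here. Write a proper non-zero subdimension vector of $\vect{d}=(1^5,2;3)$ as $\vect{e}=(\varepsilon_1,\ldots,\varepsilon_5,f;c)$ with $\varepsilon_k\in\{0,1\}$, $f\in\{0,1,2\}$, $c\in\{0,1,2,3\}$, and put $s=\sum_{k=1}^5\varepsilon_k$. Then
\begin{equation}
  \Theta(\vect{e})=3(s+f)-7c,\quad
  \overline{\Theta}_1(\vect{e})=s+2f-3c,\quad
  \overline{\Theta}_2(\vect{e})=2s+f-4c.
\end{equation}
One checks first that all three parameters vanish on $\vect{d}$, so they lie in $\Stab(\vect{d})$.

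For the first claim, I will show that $\Theta(\vect{e})\neq0$ for every proper non-zero $\vect{e}$, so that $\Theta$ lies on no hyperplane $\mathrm{W}_{\vect{e}}$ at all, let alone a proper wall. Indeed, $3(s+f)=7c$ forces $3\mid c$, so $c\in\{0,3\}$. Then $s+f=0$ or $s+f=7$, which means $\vect{e}=0$ or $\vect{e}=\vect{d}$.

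For the second claim, I will apply \cref{lemma:projection} to each $\overline{\Theta}_i$. This requires listing all $\vect{e}$ with $\Theta(\vect{e})<0<\overline{\Theta}_i(\vect{e})$, split by the value of $c$.
\begin{itemize}
  \item $c=0$ is impossible, since then $\Theta(\vect{e})\geq 0$.
  \item For $\overline{\Theta}_1$: when $c=1$ the conditions are $s+f\leq2$ and $s+2f\geq4$, which forces $\vect{e}=(0^5,2;1)$. When $c=2$ the conditions $s+f\leq4$ and $s+2f\geq7$ are incompatible. When $c=3$ we would need $s+2f>9$, which is impossible.
  \item For $\overline{\Theta}_2$: when $c=1$ we need $s+f\leq2$ and $2s+f\geq5$. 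When $c=2$ we need $s+f\leq4$ and $2s+f\geq9$. When $c=3$ we need $2s+f>12$. None of these can be satisfied, so the hypothesis of \cref{lemma:projection} holds vacuously for $\overline{\Theta}_2$.
\end{itemize}

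For the single remaining vector $\vect{e}=(0^5,2;1)$, take $\vect{e}'=\vect{i}_6$. Every representation of dimension vector $\vect{e}$ has a linear map $\mathbb{C}^2\to\mathbb{C}$ along $\alpha_6$, which has a non-zero kernel, and that kernel is a subrepresentation of dimension vector $\vect{i}_6$. Hence $\vect{e}'\hookrightarrow\vect{e}$, and $\Theta(\vect{e}')=3>0$. So the hypothesis of \cref{lemma:projection} holds for $\overline{\Theta}_1$ as well. The lemma then gives that both $\overline{\Theta}_i$ lie in the closure of the chamber of $\Theta$, and also that $\repspace[\Theta\semistable]{Q,\vect{d}}\subseteq\repspace[\smash{\overline{\Theta}_i}\semistable]{Q,\vect{d}}$.

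The main risk is not missing any case in the enumeration; I would double-check it, for instance with \quivertools. There is no conceptual obstacle.
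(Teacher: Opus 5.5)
Your proposal is correct and follows essentially the same route as the paper's proof. Both arguments use divisibility to show $\Theta(\vect{e})\neq 0$ for all proper non-zero $\vect{e}$, a case check to show the hypothesis of \cref{lemma:projection} holds vacuously for $\overline{\Theta}_2$, and, for $\overline{\Theta}_1$, dispose of the single exceptional vector $2\vect{i}_6+\vect{j}$ via $\vect{i}_6\hookrightarrow 2\vect{i}_6+\vect{j}$ with $\Theta(\vect{i}_6)=3>0$. Your explicit handling of $c=0$ is slightly more careful than the paper, whose divisibility remark only mentions $d=3$ as the exceptional case.
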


\begin{proof}
  Consider a non-zero proper dimension vector~$\vect{e}\leq\vect{d}=(1^5,2;3)$ such that~$\Theta(\vect{e})\leq 0$.
  Then
  \begin{equation}
    \vect{e}=\sum_{k\in K}\vect{i}_k+e\vect{i}_6+d\vect{j}
  \end{equation}
  for a subset~$K\subset\{1,\ldots,5\}$ of cardinality~$f$ such that
  \begin{equation}
    (0,0,0)\lneqq(f,e,d)\lneqq(5,2,3),
  \end{equation}
  and we have~$3(f+e)\leq 7d$, and thus strict inequality:
  indeed,~$3$ divides~$3(f+e)$ but not~$7d$ unless~$d=3$,
  in which case equality would force~$(f,e,d)=(5,2,3)=\vect{d}$.
  This proves that~$\Theta$ does not belong to a proper wall.

  It also follows from this condition, by a quick case-by-case consideration,
  that~$2f+e\leq 4d$, thus~$\overline{\Theta}_2$ belongs to the closure of the chamber containing~$\Theta$.
  If~$f+2e>3d$, we find, again by a case-by-case consideration,
  that~$(f,e,d)=(0,2,1)$.
  But a representation of dimension vector~$2\vect{i}_6+\vect{j}$
  certainly contains a subrepresentation of dimension vector~$\vect{i}_6$.

  By~\cref{lemma:projection}, we see that also~$\overline{\Theta}_1$ belongs to the closure of the chamber containing~$\Theta$.
\end{proof}
\subsection{First projection}
\label{subsection:QM2-first-projection}
We consider the projection
\begin{equation}
  \label{equation:p1-qm-2}
  p_1\colon\QM{2}\to\modulispace[\overline{\Theta}_1\semistable]{Q^{(6)},\vect{d}}.
\end{equation}
We determine its fibres and the singularities of the target.

The proper non-zero dimension vectors~$\vect{e}\leq\vect{d}$
for which~$\overline{\Theta}_1(\vect{e})=0$ holds are
\begin{itemize}
  \item $\vect{e}^1_k=\vect{i}_k+\vect{i}_6+\vect{j}$, for~$k=1,\ldots,5$;
  \item $\vect{e}^2_{kl}=\sum_{p\not=k,l}\vect{i}_p+\vect{j}$ for~$1\leq k<l\leq 5$;
  \item $\vect{e}^3_k=\sum_{l\neq k}\vect{i}_l+\vect{i}_6+2\vect{j}$ for~$k=1,\ldots,5$;
  \item $\vect{e}^1_k+\vect{e}^1_l$ for~$k\neq l$.
\end{itemize}
For this latter dimension vector there are no~$\overline{\Theta}_1$-stable representations,
since the Euler pairing with itself is~$2$.
For the other three dimension vectors
there are stable representations,
as direct inspection of a general representation easily shows
(or alternatively one can use~\cref{lemma:stable-general-position}).

We can thus enumerate the decomposition types, besides~$(\vect{d}^1)$, as
\begin{enumerate}
  \item $\xi_1=((\vect{e}^1_k)^1,(\vect{e}^3_k)^1)$ for~$k=1,\ldots,5$, and
  \item $\xi_2=((\vect{e}^1_k)^1,(\vect{e}^1_l)^1,(\vect{e}^2_{kl})^1)$ for~$1\leq k<l\leq 5$.
\end{enumerate}
Recall that by~\cref{proposition:fibres-projection-wall}, we have
\begin{equation}
  p_1^{-1}(V)\cong\modulispace[\Theta_\xi\semistable,\nilpotent]{Q_\xi,\vect{d}_\xi},
\end{equation}
while by \cref{proposition:local-quiver}
the singularity of a point in the Luna stratum corresponding to a decomposition type~$\xi$
is \'etale equivalent to the singularity of~$0$ in~$\modulispace[0\semistable]{Q_\xi,\vect{d}_\xi}$.
For the trivial Luna type~$(\vect{d}^1)$
the fibre is a point
and the points of the stratum are smooth.

\begin{lemma}
  \label{lemma:fiber-type-qm-2-1}
  For~$V\in S_{\xi_1}\subset\modulispace[\overline{\Theta}_1\semistable]{Q^{(6)},\vect{d}}$,
  the fibre~$p_1^{-1}(V)$ is isomorphic to~$\mathbb{P}^1$.
\end{lemma}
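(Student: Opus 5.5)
We apply \cref{proposition:fibres-projection-wall}: for $V\in S_{\xi_1}$ we have $p_1^{-1}(V)\cong\modulispace[\Theta_{\xi_1}\semistable,\nilpotent]{Q_{\xi_1},\vect{d}_{\xi_1}}$. So the plan is to compute the local quiver $Q_{\xi_1}$ for $\xi_1=((\vect{e}^1_k)^1,(\vect{e}^3_k)^1)$, its dimension vector $\vect{d}_{\xi_1}=\vect{i}_1+\vect{i}_2$, and the induced stability $\Theta_{\xi_1}=\Theta(\vect{e}^1_k)\vect{i}_1^*+\Theta(\vect{e}^3_k)\vect{i}_2^*$, and then identify the resulting moduli space.

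First we compute the Euler forms on $Q^{(6)}$ from $\langle\vect{a},\vect{b}\rangle=\sum_i a_ib_i-\sum_{l}a_{i_l}b_j$. Write $\vect{e}^1=\vect{e}^1_k=(\vect{i}_k+\vect{i}_6)+\vect{j}$ and $\vect{e}^3=\vect{e}^3_k$. This gives:
\begin{itemize}
  \item $\langle\vect{e}^1,\vect{e}^1\rangle=3-2=1$ and $\langle\vect{e}^3,\vect{e}^3\rangle=(4+1+4)-5\cdot 2=-1$, so the vertices carry $0$ and $2$ loops respectively.
  \item $\langle\vect{e}^1,\vect{e}^3\rangle=(1+2)-2\cdot 2=-1$, so there is $1$ arrow from the first vertex to the second.
  \item $\langle\vect{e}^3,\vect{e}^1\rangle=(1+2)-5\cdot 1=-2$, so there are $2$ arrows from the second vertex to the first.
\end{itemize}
Next we evaluate $\Theta=\Theta_\can$: $\Theta(\vect{e}^1)=3\cdot 2-7=-1$ and $\Theta(\vect{e}^3)=3\cdot 5-14=1$. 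Hence $\Theta_{\xi_1}=-\vect{i}_1^*+\vect{i}_2^*$.

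A representation of dimension vector $\vect{i}_1+\vect{i}_2$ consists of scalars: two loops at $\vect{i}_2$, one arrow $c\colon 1\to 2$, and two arrows $a,b\colon 2\to 1$. The nilpotency condition forces the loops to vanish, and also forces every oriented cycle to be nilpotent, so $ca=cb=0$. Semistability for $\Theta_{\xi_1}$ requires that no subrepresentation has dimension vector $\vect{i}_2$. Subrepresentations $\vect{i}_1$ are always allowed, since $\Theta_{\xi_1}(\vect{i}_1)=-1<0$. Since $\Theta(\vect{i}_2)=1>0$, the subspace at vertex $2$ must not be a subrepresentation, so $(a,b)\neq(0,0)$. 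This then forces $c=0$. Quotienting by $\Gm^2$, which acts on $(a,b)$ by a common scalar, gives $(a:b)\in\mathbb{P}^1$. Equivalently, $\modulispace[\Theta_{\xi_1}\semistable,\nilpotent]{Q_{\xi_1},\vect{d}_{\xi_1}}$ is the Kronecker moduli space of dimension vector $(1,1)$, which is $\mathbb{P}^1$.

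The main obstacle is getting the direction and sign conventions right, namely which vertex is the source of the two arrows and the sign of $\Theta_{\xi_1}$. If these were reversed, the same argument would still produce $\mathbb{P}^1$ but via the single arrow, and that would give a point instead. We therefore double-check the orientation against the convention that the number of arrows $i_k\to i_l$ is $\delta_{kl}-\langle\dimvect U_k,\dimvect U_l\rangle$. As a sanity check, there are $2$ arrows from $\vect{e}^3$ to $\vect{e}^1$, and $\vect{e}^1$ is the destabilizing-type summand: $\vect{e}^1$ has $\Theta(\vect{e}^1)<0$, so it can be a subobject in $\Theta$-stable representations. This is consistent with extensions $0\to U_1\to V\to U_3\to 0$ being parametrized by $\mathbb{P}(\mathrm{Ext}^1(U_3,U_1))=\mathbb{P}^1$.
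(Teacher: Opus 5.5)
Your proposal is correct and is essentially the paper's proof. You obtain the same local quiver (one arrow $1\to 2$, two arrows $2\to 1$, two loops at $2$) with $\Theta_{\xi_1}=-\vect{i}_1^*+\vect{i}_2^*$; nilpotency kills the loops and the products $ca,cb$, and semistability forces $(a,b)\neq(0,0)$, hence $c=0$ and the quotient is $\mathbb{P}^1$. Your explicit Euler form computations also supply the verification of the local quiver that the paper leaves implicit.

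There are only cosmetic slips. First, $\Theta(\vect{i}_2)$ should read $\Theta_{\xi_1}(\vect{i}_2)$. Second, your remark that reversed conventions would ``still produce $\mathbb{P}^1$'' contradicts its own conclusion that this would give a point. Neither affects the argument.
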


\begin{proof}
  For the decomposition type~$\xi_1$, the local quiver~$Q_{\xi_1}$
  has~$\vect{d}_{\xi_1}=\vect{i}_1+\vect{i}_2$ and~$\Theta_{\xi_1}=-\vect{i}_1^*+\vect{i}_2^*$,
  and a nilpotent representation of~$Q_{\xi_1}$ of dimension vector~$\vect{d}_{\xi_1}$
  is of the form
  \begin{equation}\label{quiver122}
    \begin{tikzpicture}[node distance = 1.5cm]
      \node (1)                {};
      \node (2) [right of = 1] {};
      \draw (1) circle (2pt) node [above] {$\mathbb{C}$};
      \draw (2) circle (2pt) node [above] {$\mathbb{C}$};
      \draw[->, bend left = 50] (1) edge node [above] {$a$} (2);
      \draw[->, bend left = 30] (2) edge node [above] {$b_1$} (1);
      \draw[->, bend left = 60] (2) edge node [below] {$b_2$} (1);
      \draw[->, loop below] (2) edge node [right] {0} (2);
      \draw[->, loop right] (2) edge node [below] {0}(2);
    \end{tikzpicture}
  \end{equation}
  By the nilpotency we obtain that~$ab_1=ab_2=0$.

  A representation is~$\Theta_{\xi_1}$-semistable if and only if~$(b_1,b_2)\neq(0,0)$:
  the only potentially destabilising subrepresentation is the 1-dimensional subrepresentation at the vertex~$2$,
  of dimension vector~$\vect{i}_2$, and~$\Theta_{\xi_1}(\vect{i}_2)=1>0$,
  so it destabilises precisely when~$b_1=b_2=0$.
  On the semistable nilpotent locus we therefore have~$a=0$ and~$(b_1,b_2)\neq(0,0)$,
  so taking the quotient gives~$\mathbb{P}^1$.
\end{proof}

Since~$\overline{\Theta}_1$ lies on a wall of~$\Stab(\vect{d})$,
the moduli space~$\modulispace[\overline{\Theta}_1\semistable]{Q^{(6)},\vect{d}}$ need not be smooth a~priori.
However,
we have the following.

\begin{lemma}
  \label{lemma:stratum-qm-2-1}
  At every point of the Luna stratum~$S_{\xi_1}$
  the moduli space~$\modulispace[\overline{\Theta}_1\semistable]{Q^{(6)},\vect{d}}$ is smooth.
\end{lemma}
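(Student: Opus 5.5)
By \cref{proposition:local-quiver}, the singularity of $\modulispace[\overline{\Theta}_1\semistable]{Q^{(6)},\vect{d}}$ at a point of $S_{\xi_1}$ is étale equivalent to the singularity at $0$ of $\modulispace[0\semistable]{Q_{\xi_1},\vect{d}_{\xi_1}}$. So I would prove smoothness of this local model at $0$.

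First I recompute the local quiver carefully, to confirm the arrow counts shown in \eqref{quiver122}. Write $\vect{e}^1=\vect{i}_k+\vect{i}_6+\vect{j}$ and $\vect{e}^3=\sum_{l\neq k}\vect{i}_l+\vect{i}_6+2\vect{j}$. Then
\begin{equation}
  \langle\vect{e}^1,\vect{e}^1\rangle=3-2=1,
  \qquad
  \langle\vect{e}^3,\vect{e}^3\rangle=(4+1+4)-(4\cdot 2+2)=-1,
\end{equation}
so vertex $1$ carries no loop and vertex $2$ carries two loops. For the arrows between the two vertices,
\begin{equation}
  \langle\vect{e}^1,\vect{e}^3\rangle=(1+2)-(2+2)=-1,
  \qquad
  \langle\vect{e}^3,\vect{e}^1\rangle=(1+2)-(4\cdot 1+1)=-2.
\end{equation}
This gives one arrow $1\to 2$ and two arrows $2\to 1$, matching \eqref{quiver122}. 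The local dimension vector is $\vect{d}_{\xi_1}=\vect{i}_1+\vect{i}_2$.

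Next I compute the invariant ring $\mathbb{C}[\repspace{Q_{\xi_1},\vect{d}_{\xi_1}}]^{\GL_{\vect{d}_{\xi_1}}}$. The group is $\Gm^2$, acting on the coordinates $a,b_1,b_2,\ell_1,\ell_2$, where $\ell_1,\ell_2$ are the loop scalars. The loops have weight zero, while $a$ has weight $t_2t_1^{-1}$ and the $b_i$ have weight $t_1t_2^{-1}$. Invariants are therefore generated by $\ell_1$, $\ell_2$, $ab_1$ and $ab_2$. The only relation is the one among the monomials $x_i=ab_i$, namely the $2\times 2$ minor condition on $\begin{pmatrix}x_1&x_2\end{pmatrix}$. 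That condition is vacuous, since $x_1$ and $x_2$ are algebraically independent: $a=1$ recovers the $b_i$. So the invariant ring is $\mathbb{C}[\ell_1,\ell_2,x_1,x_2]$, a polynomial ring in four variables. Its spectrum is $\mathbb{A}^4$, which is smooth at $0$. The dimension count is a useful check: $4=\dim\QM{2}$, consistent with $S_{\xi_1}$ meeting the open stable locus in its closure.

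The main point to watch, rather than a real obstacle, is the arrow count. Smoothness hinges on the single arrow $1\to 2$. If there were two arrows in each direction, the invariants $a_ib_j$ would satisfy $a_1b_1a_2b_2=a_1b_2a_2b_1$, giving a conifold cone, which is singular. So I would double-check the Euler form values above against the definition of $\langle\_,\_\rangle$ for $Q^{(6)}$ with all arrows pointing to $j$. Finally, étale equivalence to a smooth point gives smoothness at every point of $S_{\xi_1}$.
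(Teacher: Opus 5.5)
Your proposal is correct and follows the paper's proof. Both reduce via \cref{proposition:local-quiver} to the local model $\modulispace[0\semistable]{Q_{\xi_1},\vect{d}_{\xi_1}}$ and show that its invariant ring is the polynomial ring $\mathbb{C}[\ell_1,\ell_2,ab_1,ab_2]$. Your Euler-form check of the arrow counts, which the paper reads off from its picture, is correct and a worthwhile addition.
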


\begin{proof}
  Let~$(a,b_1,b_2,\ell_1,\ell_2)\in\mathbb{A}^5$
  correspond to a representation as in \eqref{quiver122},
  without the nilpotency condition,
  so~$\ell_1$ and~$\ell_2$ are scalars associated to the loops.
  For the trivial stability parameter every representation is semistable.
  The group~$\Gm^2$ acts on an arrow coordinate by the ratio of the scalars at its target and source,
  and the diagonal scalar acts trivially.
  Thus the effective base change group is~$\Gm$;
  writing its parameter as~$\lambda=g_2/g_1$,
  it acts with weight~$1$ on~$a$,
  weight~$-1$ on~$b_1,b_2$,
  and trivially on~$\ell_1,\ell_2$.
  Hence the invariant ring is the polynomial ring
  \begin{equation}
    \mathbb{C}[\ell_1,\ell_2,ab_1,ab_2],
  \end{equation}
  so~$\modulispace[0\semistable]{Q_{\xi_1},\vect{d}_{\xi_1}}\cong\mathbb{A}^4$ is smooth.
  The claim now follows from~\cref{proposition:local-quiver}.
\end{proof}

\begin{lemma}
  \label{lemma:fiber-type-qm-2-2}
  For~$V\in S_{\xi_2}\subset\modulispace[\overline{\Theta}_1\semistable]{Q^{(6)},\vect{d}}$,
  the fibre~$p_1^{-1}(V)$ is isomorphic to~$\mathbb{P}^1\times\mathbb{P}^1$.
\end{lemma}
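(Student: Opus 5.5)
The plan is to follow the proofs of \cref{lemma:fiber-type-qm-2-1} and \cref{lemma:first-fiber}. Using \cref{proposition:fibres-projection-wall}, we write $p_1^{-1}(V)\cong\modulispace[\Theta_{\xi_2}\semistable,\nilpotent]{Q_{\xi_2},\vect{d}_{\xi_2}}$. We then compute the local quiver, the local dimension vector and the local stability explicitly, and describe the semistable nilpotent locus directly.

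\textbf{Local quiver.} For $\xi_2=((\vect{e}^1_k)^1,(\vect{e}^1_l)^1,(\vect{e}^2_{kl})^1)$ there are three vertices, which we label $1,2,3$ in this order, and $\vect{d}_{\xi_2}=\vect{i}_1+\vect{i}_2+\vect{i}_3$. We evaluate the Euler form of $Q^{(6)}$ on the summands:
\begin{itemize}
  \item $\langle\vect{e}^1_k,\vect{e}^1_k\rangle=3-2=1$ and $\langle\vect{e}^2_{kl},\vect{e}^2_{kl}\rangle=4-3=1$, so there are no loops;
  \item $\langle\vect{e}^1_k,\vect{e}^1_l\rangle=2-2=0$, so there are no arrows between vertices $1$ and $2$;
  \item $\langle\vect{e}^1_k,\vect{e}^2_{kl}\rangle=1-2=-1$, giving one arrow $a_1\colon 1\to 3$, and symmetrically one arrow $a_2\colon 2\to 3$;
  \item $\langle\vect{e}^2_{kl},\vect{e}^1_k\rangle=1-3=-2$, giving two arrows $b_1,b_2\colon 3\to 1$, and symmetrically two arrows $c_1,c_2\colon 3\to 2$.
\end{itemize}
For the local stability we have $\Theta(\vect{e}^1_k)=6-7=-1$ and $\Theta(\vect{e}^2_{kl})=9-7=2$. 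Hence $\Theta_{\xi_2}=-\vect{i}_1^*-\vect{i}_2^*+2\vect{i}_3^*$.

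\textbf{Semistability and nilpotency.} A subrepresentation containing vertex $3$ must contain the images of the arrows leaving $3$. The candidate destabilizing subrepresentations are $\vect{i}_3$ (weight $2$) and $\vect{i}_3+\vect{i}_1$, $\vect{i}_3+\vect{i}_2$ (weight $1$); any subrepresentation not containing $3$ has negative weight.
\begin{itemize}
  \item $\vect{i}_3$ is a subrepresentation exactly when $(b_1,b_2)=(0,0)$ and $(c_1,c_2)=(0,0)$.
  \item $\vect{i}_3+\vect{i}_1$ is a subrepresentation exactly when $(c_1,c_2)=(0,0)$, and similarly $\vect{i}_3+\vect{i}_2$ exactly when $(b_1,b_2)=(0,0)$.
\end{itemize}
So semistability means precisely $(b_1,b_2)\neq(0,0)$ and $(c_1,c_2)\neq(0,0)$. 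Since all spaces are one-dimensional, the total weight is $-1-1+2=0$, and no proper subrepresentation has weight zero, so semistable equals stable here.

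Nilpotency requires the $2$-cycles to vanish: $a_1b_s=0$ and $a_2c_s=0$ for $s=1,2$. Given semistability, this forces $a_1=a_2=0$, and then all longer cycles vanish automatically. The nilpotent semistable locus is therefore $(\mathbb{C}^2\setminus 0)_{b}\times(\mathbb{C}^2\setminus 0)_{c}$. The effective group is $\Gm^3/\Gm\cong\Gm^2$, and it scales $b$ and $c$ independently, since $b$ has weight $g_1/g_3$ and $c$ has weight $g_2/g_3$. The quotient is $\mathbb{P}^1\times\mathbb{P}^1$. Equivalently, the semi-invariants of weight $\Theta_{\xi_2}$ are $b_sc_t$, which cut out the Segre quadric in $\mathbb{P}^3$.

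\textbf{Main obstacle.} The only delicate step is getting the arrow counts and their orientations right, together with the sign of $\Theta_{\xi_2}$. If instead the double arrows pointed into vertex $3$, the fibre would come out as a point or be empty. I would double-check this by confirming that $\dim\mathbb{P}^1\times\mathbb{P}^1=2$ is compatible with $\QM{2}$ being $4$-dimensional over a stratum $S_{\xi_2}$ of small dimension.
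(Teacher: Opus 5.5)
Your proposal is correct and follows the same route as the paper. You compute the local quiver via \cref{proposition:fibres-projection-wall}, use nilpotency to kill the single arrows into the $\vect{e}^2_{kl}$-vertex (your vertex~$3$ is the paper's middle vertex~$2$), and take the quotient of the two independent nonzero pairs of arrows, which gives $\mathbb{P}^1\times\mathbb{P}^1$. Beyond the paper, you also spell out the Euler form computations and the semistability analysis that the paper leaves implicit, and these check out.
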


\begin{proof}
  For the decomposition type~$\xi_2$, the local quiver~$Q_{\xi_2}$
  has~$\vect{d}_{\xi_2}=\vect{i}_1+\vect{i}_2+\vect{i}_3$ and~$\Theta_{\xi_2}=-\vect{i}_1^*+2\vect{i}_2^*-\vect{i}_3^*$,
  and a nilpotent representation of~$Q_{\xi_2}$ of dimension vector~$\vect{d}_{\xi_2}$
  is of the form
  \begin{equation}\label{quiver1212}
    \begin{tikzpicture}[node distance = 1.5cm]
      \node (1)                {};
      \node (2) [right of = 1] {};
      \node (3) [right of = 2] {};
      \draw (1) circle (2pt) node [above] {$\mathbb{C}$};
      \draw (2) circle (2pt) node [above] {$\mathbb{C}$};
      \draw (3) circle (2pt) node [above] {$\mathbb{C}$};
      \draw[->, bend left = 50] (1) edge node [above] {$a$}  (2);
      \draw[->, bend left = 30] (2) edge node [above] {$b_1$} (1);
      \draw[->, bend left = 60] (2) edge node [below] {$b_2$} (1);
      \draw[->, bend right = 50] (3) edge node [above] {$c$}  (2);
      \draw[->, bend right = 30] (2) edge node [above] {$d_1$} (3);
      \draw[->, bend right = 60] (2) edge node [below] {$d_2$} (3);
    \end{tikzpicture}
  \end{equation}
  By the nilpotency we obtain that~$ab_1=ab_2=0$ and~$cd_1=cd_2=0$.
  The description of the fibre is similar to the previous case,
  but now we have two independent pairs of arrows,
  and we obtain that the fibre is isomorphic to~$\mathbb{P}^1\times\mathbb{P}^1$.
\end{proof}

Smoothness also holds along the second type of Luna stratum.

\begin{lemma}
  \label{lemma:stratum-qm-2-2}
  At every point of the Luna stratum~$S_{\xi_2}$
  the moduli space~$\modulispace[\overline{\Theta}_1\semistable]{Q^{(6)},\vect{d}}$ is smooth.
\end{lemma}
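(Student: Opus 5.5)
We argue exactly as in \cref{lemma:stratum-qm-2-1}: by \cref{proposition:local-quiver}, the singularity at a point of $S_{\xi_2}$ is étale equivalent to the singularity at $0$ of $\modulispace[0\semistable]{Q_{\xi_2},\vect{d}_{\xi_2}}$. So it suffices to show that this affine quotient is an affine space.

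First we confirm the shape of the local quiver from \eqref{quiver1212}. For this we compute the Euler pairings between $\vect{e}^1_k$, $\vect{e}^1_l$ and $\vect{e}^2_{kl}$ for $Q^{(6)}$:
\begin{itemize}
  \item $\langle\vect{e}^1_k,\vect{e}^1_k\rangle=2+1-2=1$, so there are no loops at the vertices $1$ and $3$;
  \item $\langle\vect{e}^2_{kl},\vect{e}^2_{kl}\rangle=3+1-3=1$, so there are no loops at the vertex $2$ either;
  \item $\langle\vect{e}^1_k,\vect{e}^1_l\rangle=1-2=-1$, giving one arrow in each direction between vertices $1$ and $3$;
  \item $\langle\vect{e}^1_k,\vect{e}^2_{kl}\rangle=1-2=-1$ and $\langle\vect{e}^2_{kl},\vect{e}^1_k\rangle=1-3=-2$, giving one arrow from $1$ to $2$ and two arrows from $2$ to $1$.
\end{itemize}
Since $\xi_2$ involves $\vect{e}^1_k$ and $\vect{e}^1_l$, the vertices are $1\leftrightarrow\vect{e}^1_k$, $3\leftrightarrow\vect{e}^1_l$ and $2\leftrightarrow\vect{e}^2_{kl}$. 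The computations show there is also one arrow $1\to3$ and one arrow $3\to1$, which \eqref{quiver1212} omits. I would recheck these pairings carefully and work with whatever quiver they actually produce; the strategy below works in either case. In general $Q_{\xi_2}$ is a thin quiver (dimension vector $\vect{i}_1+\vect{i}_2+\vect{i}_3$) with arrows $a\colon1\to2$, $b_1,b_2\colon2\to1$, $c\colon3\to2$, $d_1,d_2\colon2\to3$, and possibly $x\colon1\to3$, $y\colon3\to1$.

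For a thin quiver, the ring of invariants of $\GL_{\vect{d}_{\xi_2}}=\Gm^3$ is generated by the monomials along oriented cycles, and it suffices to take primitive cycles. Without $x,y$ these are the $2$-cycles $ab_1$, $ab_2$, $cd_1$, $cd_2$. These four functions are algebraically independent: they involve disjoint sets of variables in the pairs $\{a,b_i\}$ and $\{c,d_j\}$, and the map $\mathbb{A}^6\to\mathbb{A}^4$ is dominant. Hence the quotient is $\mathbb{A}^4$. This matches the dimension of the stratum's normal slice: $\dim\modulispace{\vect{d}}-\dim S_{\xi_2}=4-0$, since each $\vect{e}$ in $\xi_2$ has a unique stable representative.

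If the arrows $x,y$ are present, there are further invariants $xy$, $a\,y\,d_j$ and $c\,x\,b_i$. A dimension count (the quotient has dimension $8-2=6$) together with the relations among these cycle monomials would have to be checked. The main obstacle is exactly this: confirming which arrows the local quiver has, and, if there are extra arrows, whether the invariant ring is still polynomial. I expect the pairings to work out so that there are no arrows between vertices $1$ and $3$, in which case the quotient is $\mathbb{A}^4$ and the stratum is smooth.
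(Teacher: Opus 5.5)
Your strategy is the paper's. By \cref{proposition:local-quiver} it suffices to show that $\modulispace[0\semistable]{Q_{\xi_2},\vect{d}_{\xi_2}}$ is an affine space, and for the quiver \eqref{quiver1212} the effective group $\Gm^2$ has invariant ring $\mathbb{C}[ab_1,ab_2,cd_1,cd_2]$. As written, however, your proof does not close: there is an arithmetic slip at exactly the step that decides the lemma. You computed $\langle\vect{e}^1_k,\vect{e}^1_l\rangle=1-2=-1$. But $\vect{e}^1_k=\vect{i}_k+\vect{i}_6+\vect{j}$ and $\vect{e}^1_l=\vect{i}_l+\vect{i}_6+\vect{j}$ share \emph{both} $\vect{i}_6$ and $\vect{j}$, so the diagonal term is $2$. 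The arrow term is also $2$, coming from $\alpha_k$ and $\alpha_6$. Hence $\langle\vect{e}^1_k,\vect{e}^1_l\rangle=\langle\vect{e}^1_l,\vect{e}^1_k\rangle=0$, and there are no arrows between vertices $1$ and $3$. Your other pairings are correct, so the local quiver is exactly \eqref{quiver1212}.

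This is not cosmetic, because your claim that the strategy ``works in either case'' is false. Suppose arrows $x\colon1\to3$ and $y\colon3\to1$ were present. Then the invariant ring would be minimally generated by the nine cycle monomials $ab_i$, $cd_j$, $xy$, $ad_jy$, $xcb_i$; the cubic ones cannot factor, since there are no invariants of degree one. The quotient would have dimension $8-2=6$, so the origin would have embedding dimension $9>6$ and be singular. The lemma therefore genuinely depends on the pairing you left open with ``I expect''.

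Once the pairing is corrected, your argument coincides with the paper's. An invariant monomial $a^{\alpha}b_1^{\beta_1}b_2^{\beta_2}c^{\gamma}d_1^{\delta_1}d_2^{\delta_2}$ must satisfy $\alpha=\beta_1+\beta_2$ and $\gamma=\delta_1+\delta_2$, so it is a product of $ab_1,ab_2,cd_1,cd_2$. These four are algebraically independent, so the quotient is $\mathbb{A}^4$ and smoothness follows.
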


\begin{proof}
  As in the proof of \cref{lemma:stratum-qm-2-1},
  let~$(a,b_1,b_2,c,d_1,d_2)\in\mathbb{A}^6$
  correspond to a representation as in \eqref{quiver1212},
  without the nilpotency condition.
  The effective base change group is now~$\Gm^2$,
  with~$\lambda=g_2/g_1$ acting with weight~$1$ on~$a$ and weight~$-1$ on~$b_1,b_2$,
  and~$\mu=g_2/g_3$ acting with weight~$1$ on~$c$ and weight~$-1$ on~$d_1,d_2$.
  Hence the invariant ring is the polynomial ring~$\mathbb{C}[ab_1,ab_2,cd_1,cd_2]$,
  so~$\modulispace[0\semistable]{Q_{\xi_2},\vect{d}_{\xi_2}}\cong\mathbb{A}^4$ is smooth,
  and the claim follows from~\cref{proposition:local-quiver}.
\end{proof}

We have thus proved the following.

\begin{proposition}
  The projection~$p_1$ from \eqref{equation:p1-qm-2}
  is birational with smooth target,
  with fibre over each point of~$S_{\xi_1}$ isomorphic to~$\mathbb{P}^1$
  and fibre over each point of~$S_{\xi_2}$ isomorphic to~$\mathbb{P}^1\times\mathbb{P}^1$.
\end{proposition}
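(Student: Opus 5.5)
The proposition collects what has been established in this subsection, so the proof will mainly assemble those results, together with one birationality argument. Three points are needed: smoothness of the target, birationality of $p_1$, and the fibre descriptions.

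\emph{Fibre descriptions.} The target $\modulispace[\overline{\Theta}_1\semistable]{Q^{(6)},\vect{d}}$ is stratified by Luna strata. By the enumeration above, the only polystable types are the trivial type $(\vect{d}^1)$, $\xi_1$ and $\xi_2$. The fibre over $S_{\xi_1}$ is $\mathbb{P}^1$ by \cref{lemma:fiber-type-qm-2-1}, and the fibre over $S_{\xi_2}$ is $\mathbb{P}^1\times\mathbb{P}^1$ by \cref{lemma:fiber-type-qm-2-2}.

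\emph{Smoothness of the target.} Points of the trivial stratum correspond to $\overline{\Theta}_1$-stable representations. These are smooth points, since the category of representations has global dimension one; equivalently, the local quiver there is a single vertex with loops and dimension vector~$1$, whose $0$-semistable moduli space is an affine space. Points of $S_{\xi_1}$ and $S_{\xi_2}$ are smooth by \cref{lemma:stratum-qm-2-1,lemma:stratum-qm-2-2}. As these strata exhaust the target, the target is smooth. It is also irreducible, because it is the image of the irreducible variety $\QM{2}$.

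\emph{Birationality.} Over the trivial stratum, \cref{proposition:fibres-projection-wall} identifies the fibre with a moduli space for a one-vertex local quiver with dimension vector~$1$. That moduli space is a single point, so $p_1$ is bijective there. The trivial stratum is open, and it is non-empty because $\overline{\Theta}_1$-stable representations of dimension vector $\vect{d}$ exist. One sees this with \cref{lemma:stable-general-position}: for $\overline{\Theta}_1$ we have $\sum_l\Theta_{i_l}e_l=9$ and $d=3$, so the bound required there is $\Theta_{i_k}<3$, which holds. Non-emptiness alone does not give birationality, however. Over the trivial stratum, the inclusion of \cref{lemma:projection} becomes an equality of stable loci, $\repspace[\overline{\Theta}_1\stable]{Q^{(6)},\vect{d}}\subset\repspace[\Theta\stable]{Q^{(6)},\vect{d}}$, because $\overline{\Theta}_1$-stability implies $\Theta$-stability for $\Theta$ close to $\overline{\Theta}_1$ in the same chamber. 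Hence both moduli spaces contain the same geometric quotient $\repspace[\overline{\Theta}_1\stable]{Q^{(6)},\vect{d}}/\PGL$ as an open subset, and $p_1$ restricts to an isomorphism there. The two spaces have the same dimension, and the complement of the trivial stratum has fibres of dimension at most two. So $p_1$ is a birational projective morphism.

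The main obstacle is this last identification of $p_1$ over the stable locus as an isomorphism of geometric quotients, not merely a bijection. I would derive it from the description of $p_1$ as induced by the inclusion of semistable loci, together with the fact that over the stable locus both quotients are principal $\PGL$-bundles.
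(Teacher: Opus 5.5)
Your proposal is correct and follows the paper's proof. Both assemble the fibre lemmas for $S_{\xi_1}$ and $S_{\xi_2}$ and the smoothness lemmas for the proper strata, use stability for smoothness of the open stratum, and get birationality from the open stratum. Your extra argument that $p_1$ is an isomorphism over the $\overline{\Theta}_1$-stable locus (because $\overline{\Theta}_1$-stable implies $\Theta$-stable) is a valid elaboration. The paper's one-line version ("the fibre is a point, so $p_1$ is birational") already suffices in characteristic zero.
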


\begin{proof}
  Over the open Luna stratum~$(\vect{d}^1)$,
  the fibre is a point,
  so~$p_1$ is birational.
  By~\cref{lemma:fiber-type-qm-2-1,lemma:fiber-type-qm-2-2},
  the fibres over the proper Luna strata~$S_{\xi_1}$ and~$S_{\xi_2}$
  are respectively isomorphic to~$\mathbb{P}^1$ and~$\mathbb{P}^1\times\mathbb{P}^1$.
  The open stratum is smooth,
  since it consists of stable representations of the acyclic quiver~$Q^{(6)}$,
  and the proper strata are smooth by~\cref{lemma:stratum-qm-2-1,lemma:stratum-qm-2-2}.
\end{proof}

\paragraph{The base of the projection}
We now describe the moduli space~$\modulispace[\overline{\Theta}_1\semistable]{Q^{(6)},\vect{d}}$
(and its Luna strata) in terms of Grassmannians using explicit coordinates.
We consider a representation of~$Q^{(6)}$ of dimension vector~$\vect{d}$ as a tuple
\begin{equation}
  (v_1,\ldots,v_5,A)
\end{equation}
of vectors~$v_i\in\mathbb{C}^3$ together with a~$3\times 2$-matrix~$A$,
which we represent as the~$3\times 7$-matrix
\begin{equation}
  M=[v_1\mid v_2\mid v_3\mid v_4\mid v_5\mid A].
\end{equation}
We can thus define semi-invariant functions on~$\repspace{Q^{(6)},\vect{d}}$ using minors of~$M$.
Namely, we consider
\begin{equation}
  D_{ij}\colonequals\det[v_p\mid v_q\mid v_r]
\end{equation}
for~$1\leq i,j\leq 5$,~$i\not=j$ and~$\{1,\ldots,5\}\setminus\{i,j\}=\{p<q<r\}$, as well as
\begin{equation}
  E_i\colonequals\det[v_i\mid A]
\end{equation}
for~$i=1,\ldots,5$.
We then find semi-invariants
\begin{equation}
  \label{equation:Sij}
  S_{ij}=D_{ij}E_iE_j,
\end{equation}
for~$1\leq i<j\leq 5$, of weight~$\overline{\Theta}_1$.
\begin{lemma}
  The map from~$\repspace[\overline{\Theta}_1\semistable]{Q^{(6)},\vect{d}}$ to~$\mathbb{P}^9$
  given by the coordinate functions~$S_{ij}$ is well-defined.
\end{lemma}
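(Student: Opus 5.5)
The plan is to show directly that the functions $S_{ij}$ have no common zero on the $\overline{\Theta}_1$-semistable locus. They are semi-invariants of weight $\overline{\Theta}_1$ by construction, so they define a rational map to $\mathbb{P}^9$. This map is a morphism on $\repspace[\overline{\Theta}_1\semistable]{Q^{(6)},\vect{d}}$ exactly when, for every semistable $(v_1,\ldots,v_5,A)$, some $S_{ij}=D_{ij}E_iE_j$ is non-zero. Concretely, we must find indices $i<j$ such that $v_i\notin\operatorname{im}A$, $v_j\notin\operatorname{im}A$, and the remaining three vectors $v_p,v_q,v_r$ form a basis of $\mathbb{C}^3$.

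First I translate $\overline{\Theta}_1$-semistability into linear-algebra conditions, as in \cref{corollary:git-description}. A subrepresentation has dimension vector $(f,e,d)$ in the notation used above, and semistability requires $f+2e\leq 3d$.
\begin{itemize}
  \item Taking $d=0$ shows that $A$ is injective and that all $v_k\neq0$.
  \item Taking the subspace $\operatorname{im}A$ (so $d=2$, $e=2$) shows that at most two of the $v_k$ lie in $\operatorname{im}A$. Hence the set $I=\{k\mid E_k\neq0\}$ has at least three elements.
  \item Taking a plane $P\neq\operatorname{im}A$ (so $d=2$, $e\leq1$) shows that at most four of the $v_k$ lie in $P$.
  \item Taking a line $L$: if $L\not\subset\operatorname{im}A$, then at most three of the $v_k$ lie on $L$. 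If $L\subset\operatorname{im}A$, then at most one of the $v_k$ lies on $L$.
\end{itemize}

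Next comes the combinatorial step. Suppose, for contradiction, that for every pair $\{i,j\}\subset I$ the complementary triple is linearly dependent, i.e.\ spans a plane or a line. I would organize this by $|I|\in\{3,4,5\}$.
\begin{itemize}
  \item If $|I|=3$, say $I=\{1,2,3\}$, then $v_4,v_5\in\operatorname{im}A$. For each pair in $I$, the complementary triple is $v_4,v_5$ together with one $v_k$, $k\in I$. That triple is dependent, and $v_k\notin\operatorname{im}A$, so we need $v_4,v_5$ proportional. But two of the $v_k$ on a line inside $\operatorname{im}A$ is excluded.
  \item If $|I|\geq4$, the dependent triples complementary to pairs in $I$ cover the vectors many times. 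I would show that they force at least five of the $v_k$ into a common plane, or four onto a line, or three onto a line in $\operatorname{im}A$, each of which contradicts the bounds above. For instance, two dependent triples sharing two independent vectors lie in the same plane, which enlarges the coplanar set.
\end{itemize}
Having found such a pair, $E_iE_j\neq0$ and $D_{ij}\neq0$, so $S_{ij}\neq0$.

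The main obstacle is this last case analysis for $|I|=4$ and $|I|=5$, which has to handle degenerate configurations where some triples are collinear rather than merely coplanar. I would first try a cleaner route: choose a basis among the $v_k$ (possible since the subrepresentation generated by the $v_k$ alone must be all of $\mathbb{C}^3$, by $f\leq3d$ with $e=0$). Then argue by exchange that a basis can be found whose complement consists of two indices from $I$. Only if that fails would I fall back on brute enumeration of the plane and line configurations.
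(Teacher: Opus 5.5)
\textbf{There is a gap: the main case is announced but not proved.} Your strategy is the paper's: exhibit a non-vanishing $S_{ij}=D_{ij}E_iE_j$ from the linear-algebra consequences of $\overline{\Theta}_1$-semistability. Your list of constraints is correct, and so is your case $|I|=3$. But the cases $|I|=4,5$ are only announced (``I would show that they force\ldots''), and the exchange argument you name as the cleaner route is never carried out.

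\textbf{The spanning step is also justified incorrectly.} Your reason that $v_1,\ldots,v_5$ span $\mathbb{C}^3$ does not work. If they spanned only a plane $P$, the subrepresentation with $(f,e,d)=(5,0,2)$ has $\overline{\Theta}_1$-value $5-6=-1$, so ``$f\leq 3d$ with $e=0$'' gives no contradiction. You have to put the non-zero space $A^{-1}(P)$ at the vertex $i_6$, as the paper does, which gives value at least $5+2-6=1$. Equivalently, use your own bounds: at most two of the $v_k$ lie in $\operatorname{im}A$, and at most four lie in any other plane.

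\textbf{How to close the gap.} The missing step is short, and your exchange idea is the right one. Put $J=\{1,\ldots,5\}\setminus I$, so $|J|\leq2$. You need a basis $v_p,v_q,v_r$ with $J\subseteq\{p,q,r\}$.

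First, the family $(v_k)_{k\in J}$ is linearly independent. Its members are non-zero. For $J=\{m,m'\}$ they are not proportional: otherwise they give a subrepresentation of dimension vector $\vect{i}_m+\vect{i}_{m'}+\vect{i}_6+\vect{j}$ with $\overline{\Theta}_1$-value $1>0$. This is exactly what your $|I|=3$ argument uses.

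Second, since the $v_k$ span $\mathbb{C}^3$, this family extends to a basis by adjoining some of the other $v_k$. The complementary pair then lies in $I$, so the corresponding $S_{ij}$ is non-zero. This handles all values of $|I|$ at once, and no enumeration of degenerate plane and line configurations is needed.

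\textbf{Comparison with the paper.} The paper fixes a basis $K$ first. It then derives a contradiction according to whether one or both of the remaining vectors lie in $H=\operatorname{im}A$. The completed version of your route instead builds the basis around the bad indices directly. It also makes explicit the non-proportionality of two vectors in $H$, which the paper's step ``thus their span is contained in $H$'' uses tacitly.
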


\begin{proof}
  Suppose, to the contrary, that all~$S_{ij}$ evaluate to zero on a matrix~$M$.
  For a~$\overline{\Theta}_1$-semistable representation, all arrows are represented by injections,
  thus all vectors~$v_i$ are non-zero,
  and the matrix~$A$ has rank~$2$;
  let~$H\subset \mathbb{C}^3$ be the span of its columns.

  The vectors~$v_1,\ldots,v_5$ span~$\mathbb{C}^3$: otherwise their span is contained in a hyperplane~$H'$,
  which intersects~$H$ in a non-zero vector~$v$.
  This induces a subrepresentation of dimension vector~$\sum_{k=1}^5\vect{i}_k+\vect{i}_6+2\vect{j}$, a contradiction.
  Thus there exists a three-element subset~$K\subset\{1,\ldots,5\}$
  such that~$(v_k)_{k\in K}$ forms a basis of~$\mathbb{C}^3$.

  By assumption,~$v_l\in H$ for an index~$l\not\in K$.
  At most two of the vectors~$v_1,\ldots,v_5$ are contained in~$H$:
  three such vectors~$v_p,v_q,v_r\in H$
  induce a subrepresentation of dimension vector~$\vect{i}_p+\vect{i}_q+\vect{i}_r+2\vect{i}_6+2\vect{j}$, a contradiction.
  If both~$v_l,v_{l'}$ for~$K\cup\{l,l'\}=\{1,\ldots,5\}$ are contained in~$H$,
  then~$v_k,v_l,v_{l'}$ for any~$k\in K$ are linearly dependent,
  thus their span is contained in~$H$.
  This induces a subrepresentation of dimension vector~$\vect{i}_k+\vect{i}_l+\vect{i}_{l'}+2\vect{i}_6+2\vect{j}$, a contradiction.
  Thus~$v_l$ is the only vector contained in~$H$.
  But then~$v_l$ is linearly dependent with any two of the three vectors~$(v_k)_{k\in K}$,
  again a contradiction.
\end{proof}

The above coordinates~$S_{ij}$ satisfy the Pl\"ucker relations
\begin{equation}
  S_{ik}S_{jl}=S_{ij}S_{kl}+S_{il}S_{jk}
\end{equation}
for~$1\leq i<j<k<l\leq 5$, as well as the linear relations
\begin{equation}
  \sum_{j<i}(-1)^{j}S_{ji}+\sum_{j>i}(-1)^{j-1}S_{ij}=0
\end{equation}
for all~$i=1,\ldots,5$. We can thus eliminate the~$S_{i,5}$, say, and end up with coordinates~$S_{i,j}$ for~$1\leq i<j\leq 4$ satisfying the defining relations of~$\Gr(2,4)$.

\begin{lemma}
  \label{lemma:QM2-target-grassmannian}
  The morphism~$p\colon\modulispace[\overline{\Theta}_1\semistable]{Q^{(6)},\vect{d}}\rightarrow\Gr(2,4)$ is an isomorphism,
  mapping the union of the proper Luna strata to a configuration of five projective planes meeting in ten points.
\end{lemma}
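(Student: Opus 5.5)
We first check that $p$ is a morphism into $\Gr(2,4)$. By the previous lemma the $S_{ij}$ define a morphism to $\mathbb{P}^9$. The Plücker and linear relations cut the image down to $\Gr(2,4)$, embedded in a $\mathbb{P}^5$ after eliminating the $S_{i5}$.

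\textbf{Step 1: the target is normal and projective of the right dimension.} The space $Y=\modulispace[\overline{\Theta}_1\semistable]{Q^{(6)},\vect{d}}$ is projective, irreducible and four-dimensional, since it is birational to $\QM{2}$ via $p_1$. By the preceding proposition it is smooth. Hence $p\colon Y\to\Gr(2,4)$ is a morphism between smooth projective fourfolds.

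\textbf{Step 2: $p$ is birational.} On the open locus of $\repspace{Q^{(6)},\vect{d}}$, use $\PGL_3$ to normalize $v_1,v_2,v_3$ to the standard basis and $v_4$ to $(1,1,1)$. The remaining data are then $v_5$ (two affine parameters) and the plane $H=\operatorname{im}A\in(\mathbb{P}^2)^\vee$ (two parameters). In these coordinates the semi-invariants $D_{ij}$ recover the point $v_5$, and the ratios $E_i/E_j$, which are the values of the linear form defining $H$ at the $v_i$, recover $H$. We then check that the ratios $S_{ij}/S_{kl}$ determine $(v_5,H)$ generically, so $p$ is generically injective. Since $p$ is dominant between varieties of equal dimension, it is birational.

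\textbf{Step 3: $p$ is finite.} We show $p$ contracts no curve. One route is via Picard ranks: $\QM{2}$ has Picard rank $5$, and $p_1$ contracts the curves of type $\xi_1$. One could then try to compute $\rho(Y)$ via \cite{MR4352662}-style arguments on the wall. The more direct route is to check that the fibres of $p$ over the images of the strata $S_{\xi_1}$ and $S_{\xi_2}$ are points. A curve contracted by $p$ would either meet the stable locus, where Step 2's explicit inverse generalizes, or lie in the closure of a proper stratum. We will describe these strata explicitly:
\begin{itemize}
  \item $S_{\xi_2}$ (for $k<l$) is the polystable representation where $v_k,v_l\in H$, with the remaining three vectors forming a complementary configuration. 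It is a single point, which gives ten points.
  \item $\overline{S_{\xi_1}}$ (for $k$) is the locus where $v_k\in H$. There the vanishing of $E_k$ kills exactly the $S_{kj}$, so its image lies in a linear section of $\Gr(2,4)$, which we identify with a plane.
\end{itemize}
An injective birational morphism onto a normal variety is an isomorphism by Zariski's main theorem. So it suffices to show $p$ is injective on the strata, i.e.\ that the $S_{ij}$ with $i,j\neq k$ separate the polystable points of $\overline{S_{\xi_1}}$, which is a $\mathbb{P}^2$ worth of configurations.

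\textbf{Step 4: the configuration.} The five planes $\overline{S_{\xi_k}}$ for $k=1,\dots,5$ pairwise meet in the single point $S_{\xi_2}$ for $\{k,l\}$, which gives ten points. We will verify that these are exactly the planes $\{S_{kj}=0\ \forall j\}$ in Plücker coordinates, namely planes of one ruling, with the linear relations used to rewrite $S_{k5}$.

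\textbf{Main obstacle.} We expect the main difficulty in Step 3: the injectivity of $p$ along the boundary strata, where many $E_i$ vanish. There we would work in explicit normal forms for the polystable representations $\vect{e}^1_k\oplus\vect{e}^3_k$ and check the separation by hand.
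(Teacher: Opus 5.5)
Your overall architecture (birationality, then injectivity stratum by stratum, then Zariski's main theorem using normality of $\Gr(2,4)$) is the paper's. Since there are only finitely many Luna strata, stratum-wise injectivity already gives quasi-finiteness, and that is all Zariski's main theorem needs, so the reduction itself is fine.

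\textbf{The gap: the open stratum.} Your Step~2 normal form ($v_1,v_2,v_3$ the standard basis, $v_4=(1,1,1)$) exists only when $v_1,\dots,v_4$ form a projective frame. The assertion in Step~3 that this inverse ``generalizes'' to all stable points is false as stated. Apart from injectivity of the arrows, $\overline{\Theta}_1$-stability only excludes three things: some $v_k\in H$, three proportional $v_k$, or four coplanar $v_k$. These are the dimension vectors $\vect{e}^1_k,\vect{e}^2_{kl},\vect{e}^3_k$.

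Here is a stable configuration your normal form misses. Take $v_1,v_2,v_3$ pairwise independent spanning a plane $P$, take $v_4$ proportional to $v_5\notin P$, and take $H$ general. Every four of the $v_k$ contain $\{v_1,v_2,v_3\}$ or $\{v_4,v_5\}$, hence a coplanar triple. So no reordering of your normal form applies at this point.

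Generic injectivity therefore gives you birationality, but it does not exclude contracted curves through such points. The missing observation is the one the paper uses: stability forces $v_k\notin H$ for every $k$. This lets you normalize $A=[e_2\mid e_3]$, $v_5=e_1$ and $v_k=(1,a_{1k},a_{2k})$. Then the $S_{kl}$ with $1\le k<l\le 4$ are exactly the Pl\"ucker coordinates of the rank-two matrix $(a_{ik})$. This proves injectivity on the whole open stratum, and birationality, in one stroke, with no deferred ``we then check''.

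\textbf{The boundary strata.} The computation you defer as the main obstacle is short. For $\vect{e}^1_5\oplus\vect{e}^3_5$, use that no $w_k$ is proportional to $w$. The paper then normalizes $M$ to have rows $(0,0,0,0,1,1,0)$, $(a_1,a_2,a_3,0,0,0,1)$, $(1,1,1,1,0,0,0)$. It reads $(a_1:a_2:a_3)$ off the $S_{kl}$, which gives injectivity on the stratum and identifies its closure with $\mathbb{P}^2$.

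\textbf{The Picard-rank aside.} This contains an error: $\QM{2}$ has Picard rank $6=|\supp(\vect{d})|-1$; rank $5$ is $\QM{1}$. With the correct value that route actually closes the open-stratum gap without any injectivity computation, given birationality:
\begin{enumerate}
\item $p_1$ has exactly five exceptional prime divisors, namely the $\mathbb{P}^1$-fibrations over the five closures of the $S_{\xi_1}$-strata. The $\mathbb{P}^1\times\mathbb{P}^1$ fibres over points are not divisors.
\item Hence the smooth wall moduli space has Picard rank $1$.
\item So the birational morphism $p$ to $\Gr(2,4)$ contracts no divisor.
\item By purity of the exceptional locus over a smooth target, $p$ has empty exceptional locus and is an isomorphism.
\end{enumerate}
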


\begin{proof}
  Since the target is normal,
  it suffices by Zariski's main theorem to prove that
  the proper birational morphism~$p$ is bijective,
  which we do stratum by stratum.
  So assume that a representation, given by~$(v_1,\ldots,v_5,A)$ as above, is~$\overline{\Theta}_1$-stable.
  If some~$v_k$ belongs to~$H$,
  we find a subrepresentation of dimension vector~$\vect{e}^1_k$, a contradiction.
  We can thus normalize the matrix~$M$ to the form
  \begin{equation}
    M=\left[
      \begin{array}{ccccccc}1&1&1&1&1&0&0\\ a_{11}&a_{12}&a_{13}&a_{14}&0&1&0\\ a_{21}&a_{22}&a_{23}&a_{24}&0&0&1
    \end{array}\right].
  \end{equation}
  The submatrix~$(a_{ik})_{i,k}$ has rank two since we already know that~$v_1,\ldots,v_5$ span~$\mathbb{C}^3$,
  and the coordinates~$S_{kl}$ for~$1\leq k<l\leq 4$ precisely evaluate to
  the Pl\"ucker coordinates of the~$2\times 4$-matrix~$(a_{ik})_{i,k}$,
  proving bijectivity of the restriction of~$p$ to the open Luna stratum.

  Assume now that a representation belongs to the Luna stratum~$((\vect{e}_5^1)^1,(\vect{e}_5^3)^1)$,
  say, thus the matrix~$M$ is of the form
  \begin{equation}
    M=\left[
      \begin{array}{ccccccc}0&0&0&0&1&1&0\\ w_1&w_2&w_3&w_4&0&0&w
    \end{array}\right]
  \end{equation}
  for~$w_1,w_2,w_3,w_4,w\in\mathbb{C}^2$.
  Since these vectors define a~$\overline{\Theta}_1$-stable representation of dimension vector~$\vect{e}_1^3$,
  we see that none of the~$w_k$ are collinear to~$w$,
  since this would yield a subrepresentation of dimension vector~$\vect{e}_k^1$. Thus we can normalize~$M$ to
  \begin{equation}
    M=\left[
      \begin{array}{ccccccc}0&0&0&0&1&1&0\\ a_1&a_2&a_3&0&0&0&1\\ 1&1&1&1&0&0&0
    \end{array}\right].
  \end{equation}
  The coordinates~$S_{kl}$ allow us to read off~$(a_1:a_2:a_3)$,
  showing bijectivity of the restriction of~$p$ to this Luna stratum,
  and showing that its closure is isomorphic to~$\mathbb{P}^2$.

  This proves that the map~$p$ is an isomorphism,
  and the image of the union of proper Luna strata is a configuration of five planes~$P_k$ meeting in ten points~$q_{kl}$,
  that is,~$P_k\cap P_l=\{q_{kl}\}$ for~$1\leq k<l\leq 5$.
\end{proof}
We thus conclude:

\begin{theorem}
  \label{theorem:QM2-grassmannian}
  The projection~$p_1\colon\QM{2}\rightarrow\Gr(2,4)$ is birational and semismall,
  with general fibre over the five planes~$P_k$ isomorphic to~$\mathbb{P}^1$,
  and fibre over the ten points~$q_{kl}$ isomorphic to~$\mathbb{P}^1\times\mathbb{P}^1$.
\end{theorem}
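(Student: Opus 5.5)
The plan is to assemble the statement from the results already established for the projection $p_1$ in \cref{equation:p1-qm-2}. We compose $p_1\colon\QM{2}\to\modulispace[\overline{\Theta}_1\semistable]{Q^{(6)},\vect{d}}$ with the isomorphism of \cref{lemma:QM2-target-grassmannian}, which identifies the target with $\Gr(2,4)$. The proposition preceding it already shows that $p_1$ is birational and records its fibres over the proper Luna strata. Under the isomorphism, the closure of each stratum $S_{\xi_1}$ (type $((\vect{e}^1_k)^1,(\vect{e}^3_k)^1)$) becomes one of the planes $P_k$, and each stratum $S_{\xi_2}$ (type $((\vect{e}^1_k)^1,(\vect{e}^1_l)^1,(\vect{e}^2_{kl})^1)$) becomes one of the points $q_{kl}=P_k\cap P_l$.

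First, I would pin down the stratification of $\Gr(2,4)$. The open stratum is the complement of $\bigcup_k P_k$, and its fibres are points. The open part $P_k\setminus\{q_{kl}\}_{l\neq k}$ is the stratum $S_{\xi_1}$ for index $k$. This needs a check: the degenerations of the $\xi_1$-type polystable representation inside the closure are exactly the $\xi_2$ types $\vect{e}^3_k\rightsquigarrow\vect{e}^1_l\oplus\vect{e}^2_{kl}$. That is, a stable representation of dimension vector $\vect{e}^3_k$ degenerates by splitting off $\vect{e}^1_l$, which leaves $\vect{e}^3_k-\vect{e}^1_l=\vect{e}^2_{kl}$. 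These are the only dimension vectors with $\overline{\Theta}_1=0$ inside $\vect{e}^3_k$. This matches the normal form in the proof of \cref{lemma:QM2-target-grassmannian}, where the coordinate $(a_1:a_2:a_3)$ on $P_k$ degenerates exactly at the four coordinate-type points corresponding to $l\neq k$. The fibres are then $\mathbb{P}^1$ over the general points of $P_k$ by \cref{lemma:fiber-type-qm-2-1}, and $\mathbb{P}^1\times\mathbb{P}^1$ over $q_{kl}$ by \cref{lemma:fiber-type-qm-2-2}.

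Second, I would verify semismallness. Recall that a proper map $f\colon X\to Y$ from a smooth variety is semismall if $2\dim f^{-1}(y)\le\operatorname{codim}$ of the locus where fibres have that dimension, i.e.\ $\dim\{y:\dim f^{-1}(y)\ge r\}+2r\le\dim X=4$. For $r=0$ this is trivial. For $r=1$ the locus is $\bigcup P_k$, of dimension $2$, and $2+2=4$. For $r=2$ the locus is the ten points $q_{kl}$, of dimension $0$, and $0+4=4$. All inequalities hold, with equality in fact, so $p_1$ is semismall and even small-type critical on each stratum.

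The main obstacle is the first step: making sure that the fibre dimension is constant along each stratum and that the strata closures are exactly as claimed. Constancy of the fibres comes from \cref{proposition:fibres-projection-wall}, since the fibre depends only on the Luna type. The identification of closures I would settle with the explicit normal forms from \cref{lemma:QM2-target-grassmannian}, checking that $S_{\xi_2}\subset\overline{S_{\xi_1}}$ for the appropriate $k$ and that the planes meet only at the $q_{kl}$.
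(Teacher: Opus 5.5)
Your proposal is correct and follows the paper's argument. You compose $p_1$ with the isomorphism of \cref{lemma:QM2-target-grassmannian}, take birationality and the fibres $\mathbb{P}^1$ and $\mathbb{P}^1\times\mathbb{P}^1$ from the preceding proposition (via \cref{lemma:fiber-type-qm-2-1,lemma:fiber-type-qm-2-2}), and check semismallness by the counts $2+2\cdot 1=4$ and $0+2\cdot 2=4$. Your extra closure check is already contained in that lemma; note also that equality in both counts makes every stratum relevant, so $p_1$ is semismall but not small, and ``small-type critical'' is not a meaningful qualifier.
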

Indeed, the exceptional strata have dimensions~$2$ and~$0$ respectively,
and the corresponding fibres have dimensions~$1$ and~$2$.

In the next subsection we use this geometric description
to show that~$\QM{2}$ is isomorphic to Manivel's Segre cousin.

\subsection{Identification with the Segre cousin}
\label{subsection:segre-cousin}
Let us first recall from \cite{MR4739832}
the construction of the Segre cousin and the properties we will use.
Let~$V_4$ and~$V_5$ be vector spaces of dimension~4 and~5.
The \emph{Segre cousin} is the smooth Fano fourfold~$X$
obtained as the zero locus of a general section~$\theta$ of the vector bundle
\begin{equation}
  \mathcal{U}^\vee\boxtimes\wedge^2\mathcal{V}^\vee
\end{equation}
on~$\Gr(2,V_4)\times\Gr(3,V_5)$,
where~$\mathcal{U}$ and~$\mathcal{V}$ denote the tautological subbundles of rank~2 and~3.
It first arose in the database \cite{fanofourfolds}
as the unique Fano fourfold of maximal Picard rank (namely~6) therein.
By \cite[Proposition~4.1]{MR4739832}
it is rational and of index~1,
with~$\mathrm{h}^0(X,\omega_X^\vee)=40$ and~$\mathrm{c}_1^4=172$,
and its cohomology is of pure Hodge--Tate type
with~$\mathrm{h}^{1,1}=6$ and~$\mathrm{h}^{2,2}=17$,
all in agreement with the invariants of~$\QM{2}$ in \cref{table:overview}.

The section~$\theta$ is a general element of~$V_4^\vee\otimes\wedge^2V_5^\vee$,
i.e., a general four-dimensional space of skew-symmetric forms in five variables.
This space contains exactly five rank-2 forms up to scalars,
which determine five points in~$\mathbb{P}(V_4)$,
five planes in~$\mathbb{P}(V_5)$,
and five planes~$\pi_1,\ldots,\pi_5$ in~$\Gr(2,V_4)$ belonging to the same ruling,
all in general position \cite[\S2]{MR4739832}.
The relation to the Segre cubic primal is through the second projection:
by \cite[Proposition~4.2]{MR4739832}
the morphism~$\rho_2\colon X\to\Gr(3,V_5)$ is a small resolution of
a singular fourfold with ten singular points,
contracting ten planes which form
part of a Cremona--Richmond configuration,
in analogy with the small resolutions of the Segre cubic itself.
Strictly speaking, it is therefore this singular fourfold
which is the 4-dimensional analogue of the (itself singular) Segre cubic,
with Manivel's Segre cousin a small resolution of it.

For our purposes the first projection is the relevant one:
by \cite[Proposition~4.6]{MR4739832},
the morphism
\begin{equation}
  \rho_1\colon X\to\Gr(2,V_4)
\end{equation}
is birational,
its exceptional locus lies over the union of the five planes~$\pi_1,\ldots,\pi_5$,
which intersect pairwise in ten points,
the fibre over a general point of each plane is a~$\mathbb{P}^1$,
and the fibre over each of the ten intersection points is~$\mathbb{P}^1\times\mathbb{P}^1$.
This yields the blow-up-and-contract description of~$X$
given in the introduction of \cite{MR4739832} (see also \S4.4 of op.~cit.):
blowing up~$\Gr(2,4)$ first in the ten points
and then in the strict transforms of the five planes,
the strict transforms of the exceptional divisors of the first blow-up
can be contracted to the smooth Fano fourfold~$X$.

We will prove the following.
\begin{theorem}
  \label{theorem:segre-cousin}
  The quiver moduli space~$\QM{2}$
  is isomorphic to the Segre cousin~$X$.
\end{theorem}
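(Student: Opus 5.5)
The plan is to compare $\QM{2}$ and the Segre cousin~$X$ as varieties over~$\Gr(2,4)$, using \cref{theorem:QM2-grassmannian} for~$\QM{2}$ and \cite[Proposition~4.6]{MR4739832} for~$X$. First I match the two discriminant configurations. Then I show that the two birational morphisms agree in codimension one. Finally, the Fano property turns this into an honest isomorphism.

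\emph{Step 1: matching the configurations.} Two planes in~$\Gr(2,4)$ of opposite rulings (an $\alpha$-plane of lines through a point and a $\beta$-plane of lines in a plane) meet either in a line or not at all. Two distinct planes of the same ruling meet in exactly one point. By \cref{lemma:QM2-target-grassmannian}, the five planes~$P_k$ meet pairwise in single points~$q_{kl}$, so they all lie in one ruling. Applying the duality of~$\mathbb{P}^3$ if needed, I may take them to be $\alpha$-planes, corresponding to five points~$x_1,\ldots,x_5\in\mathbb{P}^3$, with~$q_{kl}$ the line~$x_kx_l$. The ten~$q_{kl}$ are distinct, so no three of the~$x_k$ are collinear. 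I still have to exclude four of them being coplanar. I would do this from the explicit normal forms of~$M$ in the proof of \cref{lemma:QM2-target-grassmannian}, by computing the Plücker coordinates of the points of~$P_5$ and reading off~$x_5$, and similarly for the other planes using the~$S_5$-symmetry of the situation. Manivel's five planes~$\pi_k$ are in general position by \cite[\S2]{MR4739832}. Since five points of~$\mathbb{P}^3$ in general position form a single $\PGL_4$-orbit, some automorphism of~$\Gr(2,4)$ carries~$\{P_k\}$ to~$\{\pi_k\}$. From now on I fix this identification.

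\emph{Step 2: agreement away from the ten points.} Let~$U=\Gr(2,4)\setminus\{q_{kl}\}$. Over~$U$, the exceptional locus of~$p_1$ is the preimage of the disjoint smooth surfaces~$P_k\cap U$, which have codimension two. The fibres over them are~$\mathbb{P}^1$ by \cref{lemma:fiber-type-qm-2-1}, so the exceptional locus is a divisor, and the morphism is projective and birational between smooth varieties. By the standard characterization of blowups (Danilov, or Ein--Shepherd-Barron), such a morphism is the blowup of~$U$ along~$\bigsqcup_k P_k\cap U$. The same reasoning applies to~$\rho_1$ by \cite[Proposition~4.6]{MR4739832}. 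Hence $p_1^{-1}(U)\cong\rho_1^{-1}(U)$ over~$U$. The complements are the ten surfaces~$\mathbb{P}^1\times\mathbb{P}^1$, which have codimension two in fourfolds. So I obtain a birational map~$\QM{2}\dashrightarrow X$ that is an isomorphism in codimension one.

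\emph{Step 3: conclusion.} Both varieties are smooth Fano: $\QM{2}$ by \cref{theorem:fano-quiver-moduli}, and $X$ by \cite{MR4739832}. A birational map that is an isomorphism in codimension one identifies the anticanonical rings, since sections of~$\omega^{\otimes -m}$ extend across loci of codimension at least two. Therefore
\begin{equation}
  \QM{2}\cong\operatorname{Proj}\bigoplus_{m\geq0}\mathrm{H}^0(\QM{2},\omega^{\otimes -m})\cong\operatorname{Proj}\bigoplus_{m\geq0}\mathrm{H}^0(X,\omega^{\otimes -m})\cong X.
\end{equation}
The main obstacle is Step~1, specifically verifying that no four of the points~$x_k$ attached to the planes~$P_k$ are coplanar. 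I would attempt this by an explicit Plücker computation on the normal forms of~$M$. The blowup criterion in Step~2 also needs some care, because it has to be applied over the open set~$U$ and not over all of~$\Gr(2,4)$.
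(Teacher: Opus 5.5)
Your outline is the paper's own argument: both discriminants are five planes of one ruling attached to a projective frame, hence conjugate under $\Aut(\Gr(2,4))$. The induced birational map $\QM{2}\dashrightarrow X$ is then an isomorphism in codimension one, and uniqueness of the anticanonical model finishes the proof (\cref{proposition:lehn} with $L=-\mathrm{K}_X$).

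The genuine difference is how the frame property of the $P_k$ is proved. The paper uses symmetry. $\mathrm{S}_5$ permutes $i_1,\ldots,i_5$, and this action descends along $p_1$ to a subgroup of $\PGL_4$ permuting the $P_k$. Then \cref{lemma:recognition}, which uses $\mathrm{S}_5\not\subset\PGL_3$, rules out four coplanar points.

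Your coordinate computation also works, needs no symmetry, and is short:
\begin{itemize}
\item On the stratum where $\vect{e}^1_k$ splits off, $v_k$ lies in the column space of $A$. So $E_k=0$, and all $S_{kl}$ vanish on $P_k$.
\item The linear relations read $\sum_{j\neq i}(-1)^{j-1}S_{ij}=0$, with $S_{ji}=-S_{ij}$. So the $S_{ij}$ are Plücker coordinates of $2$-planes in $W=\{x_1-x_2+x_3-x_4+x_5=0\}\subset\mathbb{C}^5$.
\item Hence $P_k$ is the plane of lines in the hyperplane $\mathbb{P}(W\cap\{x_k=0\})$ of $\mathbb{P}(W)$. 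This is a plane of the second family, so your duality step is genuinely used.
\item The only linear relation among the restrictions $x_k|_W$ is $\sum_k(-1)^{k-1}x_k|_W=0$, and all its coefficients are non-zero. So any four are independent, and the five hyperplanes form a frame.
\end{itemize}
As a check, the paper's normal form for $P_5$ gives $(S_{12},S_{13},S_{14},S_{23},S_{24},S_{34})=(a_3,a_2,a_2-a_3,a_1,a_1-a_3,a_1-a_2)$, consistent with this.

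Your route gives an explicit model of the configuration. It bypasses the descent of the $\mathrm{S}_5$-action and the group theory in \cref{lemma:recognition}. The paper's route is coordinate-free and explains why symmetry forces the configuration.

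Your Step~2, via the blowup characterization over $U$, is a more careful version of the paper's one-line appeal to semismallness. One caveat: for $\rho_1$, only a \emph{general} point of each plane is recorded to have fibre $\mathbb{P}^1$. So you may have to shrink $U$ by a closed subset of dimension at most one. Its preimages still have dimension at most two by semismallness, so the codimension-one conclusion survives.
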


The proof compares the two birational semismall morphisms to~$\Gr(2,4)$;
an explicit moduli-theoretic isomorphism would be desirable,
but we have not found one.

The following result is the key input we need from birational geometry.
In the literature it is also referred to as the uniqueness of ample models,
see \cite[Lemma~3.6.6(1)]{MR2601039},
although the result certainly predates that reference.
\begin{proposition}
  \label{proposition:lehn}
  Let~$f\colon X\dashrightarrow Y$ be
  a birational map of normal projective varieties,
  which is an isomorphism in codimension~1.
  Let~$L$ be an ample Cartier divisor on~$Y$,
  whose proper transform~$f^*L$ on~$X$ is again ample.
  Then~$f$ extends to an isomorphism~$X\cong Y$.
\end{proposition}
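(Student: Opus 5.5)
We prove \cref{proposition:lehn} by comparing section rings. Since $f$ is an isomorphism in codimension~1, there are big open subsets $U\subset X$ and $V\subset Y$, with complements of codimension at least two, such that $f$ restricts to an isomorphism $U\cong V$. Here $f^*L$ means the Weil divisor on $X$ obtained by transporting $L|_V$ to $U$ and taking the closure. The hypothesis says this divisor is Cartier, and in fact ample.

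The first step is to compare spaces of sections. Both $X$ and $Y$ are normal, and $L$ and $f^*L$ are Cartier, hence their multiples are reflexive rank-one sheaves. Sections of such sheaves extend uniquely across closed subsets of codimension at least two. This gives, for every $m\geq 0$,
\begin{equation}
  \mathrm{H}^0(Y,\mathcal{O}_Y(mL))\cong\mathrm{H}^0(V,\mathcal{O}_V(mL))\cong\mathrm{H}^0(U,\mathcal{O}_U(mf^*L))\cong\mathrm{H}^0(X,\mathcal{O}_X(mf^*L)).
\end{equation}
These isomorphisms are compatible with multiplication, so we obtain an isomorphism of graded $\mathbb{C}$-algebras
\begin{equation}
  R(Y,L)=\bigoplus_{m\geq0}\mathrm{H}^0(Y,mL)\cong\bigoplus_{m\geq0}\mathrm{H}^0(X,mf^*L)=R(X,f^*L).
\end{equation}

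The second step uses ampleness on both sides. Since $L$ is ample on the projective variety $Y$, we have $Y\cong\operatorname{Proj}R(Y,L)$, and likewise $X\cong\operatorname{Proj}R(X,f^*L)$. The isomorphism of section rings therefore induces an isomorphism $\phi\colon X\to Y$.

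The final step, which is where care is needed, is to check that $\phi$ agrees with $f$ on $U$; then $\phi$ is the required extension. Choose $m$ large enough that $mL$ and $mf^*L$ are very ample. The embeddings into projective space are then given by the complete linear systems, and these are identified by the isomorphism in the first step. On $U$ the identification of sections is induced by $f$, so the embedding of $U$ via $|mf^*L|$ is the embedding of $V$ via $|mL|$ composed with $f$. Hence $\phi|_U=f|_U$. Since $f$ is a birational map and $U$ is dense, $\phi$ coincides with $f$ as a rational map, so $f$ extends to the isomorphism $\phi$.

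Normality enters only through the extension of sections across codimension two (Hartogs); without it, the first step fails.
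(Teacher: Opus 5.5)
Your proof is correct, and it differs from the paper in that the paper gives no proof at all. It quotes the statement as the uniqueness of ample models, \cite[Lemma~3.6.6(1)]{MR2601039}.

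That lemma is stated for $\mathbb{R}$-Cartier divisors, relatively over a base, and for rational maps that may contract divisors. It is phrased in terms of a common resolution $p\colon W\to X$, $q\colon W\to Y$. To apply it here you would also have to check that $f$ is an ample model of $f^*L$ in that sense, i.e.\ that $p^*f^*L=q^*L$ on $W$, which is typically done with the negativity lemma.

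Your argument works directly in the small, Cartier case. Because $f$ is an isomorphism between big open subsets of normal varieties, Hartogs makes $R(Y,L)$ and $R(X,f^*L)$ literally the same graded subring of the common function field. Ampleness on both sides then gives $X\cong\operatorname{Proj}R\cong Y$, and compatibility with $f$ can be read off on $U$.

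This avoids resolution of indeterminacy and negativity arguments entirely. It is exactly what the application in the proof of \cref{theorem:segre-cousin} needs, where $L$ is the anticanonical divisor of the smooth Segre cousin. The cited lemma buys only generality that the paper does not use.
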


\paragraph{Two semismall models and five points}
By \cref{theorem:QM2-grassmannian},
resp.~by \cite[Proposition~4.6]{MR4739832} as recalled above,
the semismall birational morphisms~$p_1\colon\QM{2}\to\Gr(2,4)$
and~$\rho_1\colon X\to\Gr(2,4)$
are isomorphisms over the complement of a configuration of
five planes~$P_1,\ldots,P_5$ (resp.~$P_1',\ldots,P_5'$) in~$\Gr(2,4)$,
with fibre~$\mathbb{P}^1$ over a general point of each plane,
and fibre~$\mathbb{P}^1\times\mathbb{P}^1$ over
the ten points~$q_{k,\ell}=P_k\cap P_\ell$ (resp.~$q_{k,\ell}'=P_k'\cap P_\ell'$).
We will analyze the discriminants of these morphisms,
showing that they can be identified.

Using that~$\Gr(2,4)$ is both the 4-dimensional quadric,
and the moduli space of lines in~$\mathbb{P}^3$
we know that planes in~$\Gr(2,4)$ come in two families:
\begin{itemize}
  \item planes parameterized by~$\mathbb{P}^3$,
    given by the lines through a point~$p\in\mathbb{P}^3$;
  \item planes parameterized by~$\mathbb{P}^{3,\vee}$,
    given by the lines in a hyperplane of~$\mathbb{P}^3$.
\end{itemize}
Two planes in the same family meet in a point,
two planes from a different family either meet in a line
or they are disjoint.

\begin{lemma}
  \label{lemma:uniqueness-of-configuration}
  Let~$\{R_1,\ldots,R_5\}$ be
  five distinct planes in~$\Gr(2,4)$
  intersecting in~ten distinct points.
  Then
  \begin{enumerate}
    \item\label{item:same-ruling}
      they belong to the same ruling,
      so we can assume without loss of generality that they correspond to
      five distinct points~$r_1,\ldots,r_5$ in~$\mathbb{P}^3$;
    \item\label{item:not-collinear}
      no three of the~$r_1,\ldots,r_5$ are collinear;
    \item\label{item:not-coplanar}
      if no four of the~$r_1,\ldots,r_5$ are coplanar,
      then the configuration is unique up to~$\Aut(\Gr(2,4))$:
      any two such configurations are conjugate
      under the action of~$\PGL_4\subseteq\Aut(\Gr(2,4))$.
  \end{enumerate}
\end{lemma}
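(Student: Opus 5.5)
The proof will use the incidence description of planes in $\Gr(2,4)$ recalled just before the statement. Two planes in the same family always meet in exactly one point. Two planes in different families either meet in a line (when the point lies in the hyperplane) or are disjoint. I read the hypothesis ``intersecting in ten distinct points'' as: each of the $\binom{5}{2}=10$ pairs $R_k,R_\ell$ meets in a single point $q_{k\ell}$, and these ten points are pairwise distinct.

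For \ref{item:same-ruling}, suppose two of the planes lie in different families. They then meet in a line or not at all, and neither is a single point, which contradicts the hypothesis. So all five lie in one family. After composing with the duality automorphism of $\Gr(2,4)$ (the correlation exchanging $\mathbb{P}^3$ and $\mathbb{P}^{3,\vee}$), I may assume this family is the one parametrized by $\mathbb{P}^3$. Then $R_k=\{\text{lines through }r_k\}$, and the $r_k$ are distinct because the planes are.

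For \ref{item:not-collinear}, observe that $R_k\cap R_\ell$ is the single point $q_{k\ell}=\overline{r_kr_\ell}$, the line through $r_k$ and $r_\ell$. If $r_k,r_\ell,r_m$ were collinear, then $\overline{r_kr_\ell}=\overline{r_kr_m}$, so $q_{k\ell}=q_{km}$. This contradicts the distinctness of the ten points.

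For \ref{item:not-coplanar}, assume in addition that no four of the $r_i$ are coplanar. Then $r_1,\dots,r_5$ are five points of $\mathbb{P}^3$ in linear general position. The key step is the standard fact that $\PGL_4$ acts simply transitively on ordered projective frames in $\mathbb{P}^3$. To apply it, I would first send $r_1,\dots,r_4$ to the coordinate points. The fifth point then has all four coordinates nonzero, since any vanishing coordinate would make it coplanar with three of the others. A diagonal matrix then sends it to $(1:1:1:1)$. Hence any two such configurations $\{r_i\}$ and $\{r_i'\}$ differ by an element $g\in\PGL_4$, with any chosen bijection between the labels. The induced automorphism of $\Gr(2,4)$ maps lines through $r_i$ to lines through $g(r_i)=r_i'$, so it carries $R_i$ to $R_i'$. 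If the two configurations lay in different families, I would first apply the duality from step \ref{item:same-ruling}, which is why the conclusion is stated up to $\Aut(\Gr(2,4))$.

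I expect no serious obstacle. The only delicate point is the reading of the hypothesis in \ref{item:same-ruling}: I need that every pair of planes meets in exactly one point, which is what excludes mixed families. Everything else is elementary projective geometry.
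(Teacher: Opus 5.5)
Your proposal is correct and follows essentially the same route as the paper: the incidence behaviour of the two families of planes plus the duality of $\Gr(2,4)$ for the first item, the identification $R_k\cap R_\ell=\{\overline{r_kr_\ell}\}$ for the second, and simple transitivity of $\PGL_4$ on ordered projective frames for the third. You supply details the paper leaves implicit, namely the explicit frame normalization and your reading that each pair of planes meets in exactly one point.
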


\begin{proof}
  \Cref{item:same-ruling} is evident from the geometry recalled before the statement,
  and using the duality on~$\Gr(2,4)$ exchanging points and planes.
  \Cref{item:not-collinear} follows from the assumption that the ten points are distinct.
  Finally, \Cref{item:not-coplanar}
  follows because the non-coplanarity assumption forces the five points
  to form a projective frame of~$\mathbb{P}^3$,
  and~$\PGL_4$ acts simply transitively on the set of ordered frames,
  showing how any two such sets of points are projectively equivalent.
\end{proof}

We want to establish that both discriminants
are of the form in \Cref{item:not-coplanar}
in \cref{lemma:uniqueness-of-configuration}
to conclude that they are the same.
Note that \cref{item:not-collinear} alone does not suffice:
for~$r_1,\ldots,r_4$ coplanar in general position,
and~$r_5$ outside the plane they span,
the cross-ratio of the coplanar points is a continuous parameter.

We will apply the following recognition lemma.
\begin{lemma}
  \label{lemma:recognition}
  Let~$\{r_1,\ldots,r_5\}$ be five distinct points in~$\mathbb{P}^3$,
  and assume no three are collinear.
  Consider the action of~$\mathrm{S}_5$ on~$\mathbb{P}^3$,
  seen as a subgroup of~$\PGL_4$,
  and assume that~$\mathrm{S}_5\cdot\{r_1,\ldots,r_5\}=\{r_1,\ldots,r_5\}$
  such that~$\sigma\cdot r_i=r_{\sigma(i)}$ for all~$\sigma\in\mathrm{S}_5$
  and~$i=1,\ldots,5$.
  Then no four points are coplanar.
  In particular,
  they form a projective frame.
\end{lemma}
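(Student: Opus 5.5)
We argue by contradiction, using the equivariance to transport one coplanar quadruple to all others. Suppose four of the points, say~$r_1,r_2,r_3,r_4$, lie in a plane~$\Pi\subset\mathbb{P}^3$. The hypothesis~$\sigma\cdot r_i=r_{\sigma(i)}$ says that~$\sigma\in\mathrm{S}_5$, acting on~$\mathbb{P}^3$ by a projective linear transformation, maps the plane through~$r_{i_1},\ldots,r_{i_4}$ to a plane through~$r_{\sigma(i_1)},\ldots,r_{\sigma(i_4)}$. Since~$\mathrm{S}_5$ acts transitively on~$4$-element subsets of~$\{1,\ldots,5\}$, every four of the five points are coplanar.

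We then derive the contradiction from ``all quadruples coplanar'' together with ``no three collinear''. Because~$r_1,r_2,r_3$ are not collinear, they span a unique plane~$\Pi_{123}$. The quadruple~$\{1,2,3,4\}$ is coplanar, so~$r_4\in\Pi_{123}$, and likewise~$\{1,2,3,5\}$ gives~$r_5\in\Pi_{123}$. Hence all five points lie in a single plane~$\Pi\cong\mathbb{P}^2$.

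It remains to rule out five points spanning only a plane; here the~$\mathrm{S}_5$-action must be used more seriously. The points span a~$3$-dimensional subspace~$W\subset\mathbb{C}^4$ that is stable under the lifted action. Lift the projective representation of~$\mathrm{S}_5$ to a linear representation of a central extension on~$\mathbb{C}^4$. Then~$W$ is an invariant~$3$-dimensional subspace, and the quotient~$\mathbb{C}^4/W$ is an invariant line. For the concrete action intended in the paper, namely the standard~$4$-dimensional irreducible representation of~$\mathrm{S}_5$ (the permutation action on~$\{x\in\mathbb{C}^5\mid\sum x_i=0\}$), there are no invariant subspaces, so~$W$ cannot exist. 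This is the contradiction.

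We should make explicit in the proof that ``the action of~$\mathrm{S}_5$ on~$\mathbb{P}^3$'' means the projectivization of an irreducible~$4$-dimensional representation. For the standard representation irreducibility is classical. Since~$\mathrm{S}_5$ has no nontrivial projective representations of dimension below~$4$ beyond the sign twist, we could instead argue irreducibility of any faithful projective~$4$-dimensional action via the classification of low-dimensional projective representations of~$\mathrm{S}_5$ and~$\mathrm{A}_5$. The main obstacle is precisely this step: excluding an invariant plane containing the five points. We plan to handle it by fixing the action to be the standard one, as in Manivel's setting, where irreducibility is immediate. Once no four points are coplanar, the five points are in general linear position, hence form a projective frame.
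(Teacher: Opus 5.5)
Your first two steps are correct and match the paper: transitivity of $\mathrm{S}_5$ on $4$-element subsets makes all quadruples coplanar, and no three collinear then puts all five points in the plane $\Pi$ through $r_1,r_2,r_3$. The gap is in the last step. You exclude an invariant plane only for the projectivized standard representation, and propose to add this as a hypothesis. The lemma, however, concerns an arbitrary subgroup $\mathrm{S}_5\subseteq\PGL_4$ with $\sigma\cdot r_i=r_{\sigma(i)}$, and the paper needs that generality. Where the lemma is applied, the action on $\mathbb{P}^3$ comes abstractly from the automorphisms of \QM{2} permuting $i_1,\ldots,i_5$, descended to $\Gr(2,4)$. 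Nothing identifies this action with the standard representation. That identification is essentially equivalent to the lemma's conclusion (a projective frame permuted in the standard way forces the standard action), so assuming it is circular. It is also not a harmless normalization: $\mathrm{S}_5$ embeds in $\PGL_4$ via the $4$-dimensional spin representations of its double covers as well, and these are not projectively equivalent to the standard one.

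The missing idea is the one the paper uses. The $r_i$ are distinct and $\sigma\cdot r_i=r_{\sigma(i)}$, so no $\sigma\neq 1$ acts trivially on the invariant plane $\Pi$. Hence $\mathrm{S}_5$ acts faithfully on $\Pi\cong\mathbb{P}^2$, i.e.\ embeds into $\PGL_3$, which is impossible. An elementary way to see this in your setup: five points of $\Pi$ with no three collinear lie on a unique conic, necessarily smooth. $\mathrm{S}_5$ must preserve this conic and act faithfully on it, giving $\mathrm{S}_5\hookrightarrow\PGL_2(\mathbb{C})$. This contradicts the classification of finite subgroups of $\PGL_2(\mathbb{C})$ (cyclic, dihedral, $\mathrm{A}_4$, $\mathrm{S}_4$, $\mathrm{A}_5$).

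Your fallback would also work if actually carried out. The Schur multiplier of $\mathrm{S}_5$ is $\mathbb{Z}/2$. The irreducible representations of $\mathrm{S}_5$ have degrees $1,1,4,4,5,5,6$, and the faithful irreducible representations of a double cover have degrees $4,4,4,6,6$. By complete reducibility, a $4$-dimensional projective representation is therefore either irreducible or a sum of four characters, whose image in $\PGL_4$ has order at most $2$. Hence every faithful one has no invariant hyperplane. As written, though, your proof establishes the lemma only for one particular embedding, not the one the paper actually needs.
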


\begin{proof}
  Let~$\mathbb{P}^k\subseteq\mathbb{P}^3$ be
  the linear span of the~$\mathrm{S}_5$-orbit.
  Then~$k\geq 2$,
  because we assumed the points are not collinear.
  If~$k=2$,
  then~$\mathrm{S}_5$ would be a subgroup of~$\PGL_3$,
  which is not possible,
  hence~$k=3$.
  Next, the group~$\mathrm{S}_5$
  permutes the five~4-element subsets transitively,
  so either all such 4-element subsets are coplanar,
  or none are.
  But if all are, we must have~$k=2$, which is not possible.
\end{proof}

We have that both discriminants give rise to a projective frame:
\begin{enumerate}
  \item The group~$\mathrm{S}_5$
    permutes the vertices~$i_1,\ldots,i_5\in Q_0^{(6)}$
    (whilst fixing~$i_6$ and~$j$),
    inducing automorphisms of~$\QM{2}$,
    and descending to an automorphism of~$\Gr(2,4)$.
    By \cref{lemma:QM2-target-grassmannian}
    the plane~$P_k\subset\Gr(2,4)$
    is the closure of the Luna stratum where the stable representation
    of dimension vector~$\vect{e}^1_k$ splits off,
    so the action permutes the planes~$\{P_1,\ldots,P_5\}$
    in the standard way,
    and the action is effective because the planes are distinct.
    Since these planes belong to a single ruling,
    the action moreover preserves the ruling,
    so that it is realized by a subgroup~$\mathrm{S}_5\subseteq\PGL_4\subseteq\Aut(\Gr(2,4))$.
    Thus by \cref{lemma:recognition}
    the discriminant in~$\Gr(2,4)$
    corresponds to a projective frame in~$\mathbb{P}^3$.
  \item For the Segre cousin
    we use that by \cite[\S2]{MR4739832}
    the points are in general position,
    and thus provide a projective frame by \cref{lemma:uniqueness-of-configuration}.
\end{enumerate}
This allows us to identify~$P_k$ with~$P_k'$.
It remains to apply \cref{proposition:lehn}.

\begin{proof}[Proof of \cref{theorem:segre-cousin}]
  Consider the birational map~$f$ defined by
  \begin{equation}
    \label{equation:birational-morphism}
    \begin{tikzcd}
      \QM{2} \arrow["p_1"']{rd} \arrow[dashed, "f"]{rr} & & X \arrow["\rho_1"]{ld} \\
      & \Gr(2,4).
    \end{tikzcd}
  \end{equation}
  Then~$f$ is an isomorphism in codimension one,
  because the discriminant is a surface,
  and the morphisms~$p_1$ and~$\rho_1$ are semismall,
  so that~$f$ is a local isomorphism at the generic point of
  the exceptional divisor over~$P_k$
  (whilst it evidently is a local isomorphism outside the discriminant).
  By Hartogs' theorem,
  the smoothness of~$\QM{2}$, resp.~$X$,
  and the fact that both are Fano,
  we obtain that~$f^*(-\mathrm{K}_X)=-\mathrm{K}_{\QM{2}}$,
  so that \cref{proposition:lehn} applies.
\end{proof}

\begin{remark}
  \label{remark:intrinsic-rigidity}
  In \cite[Proposition~8.1]{MR4739832}
  the infinitesimal rigidity of the Segre cousin is established
  through a cohomological computation specific to this fourfold,
  combining the prehomogeneity of the space~$V_4^\vee\otimes\wedge^2V_5^\vee$
  with Koszul complex and Borel--Weil--Bott arguments
  on~$\Gr(2,V_4)\times\Gr(3,V_5)$.
  \Cref{theorem:segre-cousin} explains this rigidity more intrinsically:
  the variety~$\QM{2}\cong X$ is a quiver moduli space
  satisfying the assumptions of \cref{theorem:fano-quiver-moduli},
  and \emph{every} such quiver moduli space is infinitesimally rigid
  by \cite[Corollary~D]{MR4954467}.
  Likewise, the vanishing of global vector fields from \cref{lemma:no-vector-fields}
  is consistent with the finiteness of~$\Aut(X)\cong\mathrm{S}_5$
  established in \cite[Corollary~5.4]{MR4739832},
  with the symmetric group realized on~$\QM{2}$
  by permuting the vertices~$i_1,\ldots,i_5$ of the subspace quiver.
\end{remark}

\subsection{Second projection}
\label{subsection:QM2-second-projection}
The following analysis gives the quiver perspective
on the algebro-geometric description provided by Manivel
in \cite[\S4.2]{MR4739832}.
We consider the projection
\begin{equation}
  p_2\colon\QM{2}\to\modulispace[\overline{\Theta}_2\semistable]{Q^{(6)},\vect{d}}.
\end{equation}
We determine its fibres and the singularities of the target.

The proper non-zero dimension vectors~$\vect{e}\leq\vect{d}$ for which~$\overline{\Theta}_2(\vect{e})=0$ holds are the following:
\begin{equation}
  \vect{e}^4_{kl}=\vect{i}_k+\vect{i}_l+\vect{j},\, 1\leq k<l\leq 5,\; \vect{e}^5_k=\vect{i}_k+2\vect{i}_6+\vect{j},\, k=1,\ldots,5,
\end{equation}
as well as~$\vect{e}^4_{kl}+\vect{e}^4_{pq}$ for~$\{k,l\},\{p,q\}$ disjoint,
and~$\vect{e}^4_{kl}+\vect{e}^5_p$ for~$p\not=k,l$.
Indeed, writing~$\vect{e}$ by the triple~$(f,e,d)$,
where~$f$ is the number of vertices among~$i_1,\ldots,i_5$ occurring in~$\vect{e}$,
the equation~$\overline{\Theta}_2(\vect{e})=0$ becomes~$2f+e=4d$.
The only proper non-zero solutions are~$(2,0,1)$,~$(1,2,1)$,~$(4,0,2)$ and~$(3,2,2)$,
which give exactly the list above.
There are no~$\overline{\Theta}_2$-semistable representations of dimension vector~$\vect{e}^5_k$,
but there are stable representations of dimension vector~$\vect{e}^4_{kl}$ and~$\vect{e}^4_{kl}+\vect{e}^5_p$ for~$p\not=k,l$,
as a quick inspection of general representations of the corresponding dimension vectors shows
(see also~\cref{lemma:stable-general-position}).
Thus the only decomposition types besides~$(\vect{d}^1)$ are
\begin{equation}
  \xi= ((\vect{e}^4_{kl})^1,(\vect{e}^4_{pq}+\vect{e}^5_r)^1)
\end{equation}
for~$\{1,\ldots,5\}=\{k,l,p,q,r\}$.
The second summand depends only on the complementary triple~$\{p,q,r\}$,
so there are ten such types.

\begin{lemma}
  \label{lemma:fiber-type-qm-2-second}
  For~$V\in S_\xi\subset\modulispace[\overline{\Theta}_2\semistable]{Q^{(6)},\vect{d}}$,
  the fibre~$p_2^{-1}(V)$ is isomorphic to~$\mathbb{P}^2$.
\end{lemma}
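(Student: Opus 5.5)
We apply \cref{proposition:fibres-projection-wall} exactly as in the proof of \cref{lemma:fiber-type-qm-2-1}: we compute the local quiver $Q_\xi$, the local dimension vector $\vect{d}_\xi$ and the local stability $\Theta_\xi$ for $\xi=((\vect{e}^4_{kl})^1,(\vect{e}^4_{pq}+\vect{e}^5_r)^1)$. We then identify the nilpotent $\Theta_\xi$-semistable moduli space by hand. By the $\mathrm{S}_5$-symmetry permuting $i_1,\ldots,i_5$, the choice of $\{k,l\}$ does not matter.

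Write $\vect{a}=\vect{e}^4_{kl}=\vect{i}_k+\vect{i}_l+\vect{j}$ and $\vect{b}=\vect{i}_p+\vect{i}_q+\vect{i}_r+2\vect{i}_6+2\vect{j}$.

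\emph{Self-pairings.} From the Euler form of $Q^{(6)}$ we get
\begin{equation}
  \langle\vect{a},\vect{a}\rangle=3-2=1,\qquad \langle\vect{b},\vect{b}\rangle=11-10=1,
\end{equation}
so $Q_\xi$ has no loops.

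\emph{Cross pairings.} The only shared vertex of $\vect{a}$ and $\vect{b}$ is $j$. Hence
\begin{equation}
  \langle\vect{a},\vect{b}\rangle=2-2\cdot2=-2,\qquad \langle\vect{b},\vect{a}\rangle=2-5\cdot1=-3.
\end{equation}
So $Q_\xi$ has two vertices $1$ (for $\vect{a}$) and $2$ (for $\vect{b}$), with two arrows $x_1,x_2\colon 1\to2$ and three arrows $y_1,y_2,y_3\colon 2\to1$. The local dimension vector is $\vect{d}_\xi=\vect{i}_1+\vect{i}_2$.

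\emph{Local stability.} With $\Theta=\Theta_\can=3\sum_{k=1}^6\vect{i}_k^*-7\vect{j}^*$ we find $\Theta(\vect{a})=6-7=-1$ and $\Theta(\vect{b})=15-14=1$. Thus $\Theta_\xi=-\vect{i}_1^*+\vect{i}_2^*$.

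A nilpotent representation with one-dimensional spaces at both vertices must satisfy $x_sy_t=0$ for all $s,t$, since each such product is an oriented cycle of length two. The only proper non-zero subrepresentations have dimension vector $\vect{i}_1$ or $\vect{i}_2$. Since $\Theta_\xi(\vect{i}_1)<0$, only $\vect{i}_2$ can destabilise, and it is a subrepresentation exactly when all $y_t=0$. So semistability means $(y_1,y_2,y_3)\neq0$, and nilpotency then forces $x_1=x_2=0$.

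The effective group $\Gm$ acts on $(y_1,y_2,y_3)$ with a common non-zero weight. The quotient is therefore $\mathbb{P}^2$ with coordinates $(y_1:y_2:y_3)$, which proves the claim.

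The only real obstacle is getting the arrow counts right. In particular one must check that the orientation with three arrows is $2\to1$, so that it matches the destabilising condition. With the opposite orientation the fibre would be $\mathbb{P}^1$ rather than $\mathbb{P}^2$.
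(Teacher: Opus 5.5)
Your proposal is correct and follows the same route as the paper. You apply \cref{proposition:fibres-projection-wall} to the local quiver with two arrows $1\to 2$, three arrows $2\to 1$ and $\Theta_\xi=-\vect{i}_1^*+\vect{i}_2^*$; semistability forces $(y_1,y_2,y_3)\neq 0$, nilpotency then kills the $x_s$, and the quotient is $\mathbb{P}^2$. Your explicit Euler-form computation of the arrow counts and of $\Theta(\vect{a})=-1$, $\Theta(\vect{b})=1$ supplies local-quiver data that the paper only states.
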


\begin{proof}
  The corresponding local quiver
  has~$\vect{d}_\xi=\vect{i}_1+\vect{i}_2$ and~$\Theta_\xi=-\vect{i}_1^*+\vect{i}_2^*$,
  and a representation of this quiver of dimension vector~$\vect{d}_\xi$ is given by two scalars~$a_1,a_2$ on the arrows from~$1$ to~$2$
  and three scalars~$b_1,b_2,b_3$ on the arrows from~$2$ to~$1$.
  Nilpotency means that every oriented cycle acts by zero,
  hence~$a_ib_j=0$ for all~$i=1,2$ and~$j=1,2,3$.
  A nilpotent representation is~$\Theta_\xi$-semistable if and only if~$(b_1,b_2,b_3)\neq(0,0,0)$:
  indeed, the only potentially destabilising subrepresentation is the copy of~$\mathbb{C}$ at the vertex~$2$,
  of dimension vector~$\vect{i}_2$, and it exists precisely when all arrows from~$2$ to~$1$ vanish.
  On the semistable nilpotent locus we therefore have~$a_1=a_2=0$ and~$(b_1,b_2,b_3)\neq(0,0,0)$,
  so taking the quotient yields~$\mathbb{P}^2$.
\end{proof}

\begin{lemma}
  \label{lemma:singularity-qm-2-second}
  At every point of a proper Luna stratum~$S_\xi$,
  the singularity of~$\modulispace[\overline{\Theta}_2\semistable]{Q^{(6)},\vect{d}}$
  is \'etale equivalent to the vertex of the affine cone over the Segre embedding of~$\mathbb{P}^1\times\mathbb{P}^2$.
\end{lemma}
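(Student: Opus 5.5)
By \cref{proposition:local-quiver}, the singularity at a point of $S_\xi$ is étale equivalent to the singularity at $0$ of $\modulispace[0\semistable]{Q_\xi,\vect{d}_\xi}$. The local quiver $Q_\xi$ and local dimension vector were already identified in the proof of \cref{lemma:fiber-type-qm-2-second}, but there only nilpotent representations were needed. Here I first recompute the full local quiver, including the loops, from the formula $\delta_{k,l}-\langle\dimvect(U_k),\dimvect(U_l)\rangle_Q$. The summands are $U_1$ of dimension vector $\vect{e}^4_{kl}$ and $U_2$ of dimension vector $\vect{e}^4_{pq}+\vect{e}^5_r$.

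The Euler form of $Q^{(6)}$ is $\langle\vect{a},\vect{b}\rangle=\sum_k a_{i_k}b_{i_k}+a_jb_j-\sum_k a_{i_k}b_j$.
\begin{itemize}
  \item For $\vect{e}^4_{kl}=\vect{i}_k+\vect{i}_l+\vect{j}$ we get $\langle\vect{e}^4_{kl},\vect{e}^4_{kl}\rangle=2+1-2=1$, so vertex $1$ has no loops.
  \item For $\vect{e}^4_{pq}+\vect{e}^5_r=\vect{i}_p+\vect{i}_q+\vect{i}_r+2\vect{i}_6+2\vect{j}$ we get $3+4+4-2\cdot 7=-3$. That would give $4$ loops at vertex $2$, which is not plausible; I recheck.
  \item The mixed pairings are $\langle\vect{e}^4_{kl},\vect{e}^4_{pq}+\vect{e}^5_r\rangle=0+2-2\cdot2=-2$ and $\langle\vect{e}^4_{pq}+\vect{e}^5_r,\vect{e}^4_{kl}\rangle=0+2-5=-3$. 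These give two arrows $1\to2$ and three arrows $2\to1$, consistent with the proof of \cref{lemma:fiber-type-qm-2-second}.
\end{itemize}
The computation for vertex $2$ has to be redone carefully. Since $\vect{e}^4_{pq}+\vect{e}^5_r$ must admit stable representations and $\dim\QM{2}=4$, I expect its self-pairing to be $1$, hence no loops. Indeed the correct value is $3+4+4-(2+2+2+4)=1$, because the arrow $i_6\to j$ contributes $2\cdot 2=4$, not $2\cdot 7$. The local quiver is therefore the acyclic-up-to-2-cycles quiver with two arrows $1\to2$ and three arrows $2\to1$, no loops, and local dimension vector $\vect{i}_1+\vect{i}_2$. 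The dimension check is consistent: $1-\langle\vect{d}_\xi,\vect{d}_\xi\rangle=1-(2-5)=4$.

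Next I compute the invariant ring. Representations are tuples $(a_1,a_2,b_1,b_2,b_3)\in\mathbb{A}^5$. As in \cref{lemma:stratum-qm-2-1}, the effective group is $\Gm$ with parameter $\lambda=g_2/g_1$, acting with weight $1$ on the $a_i$ and weight $-1$ on the $b_j$. The invariant ring is generated by the six products $x_{ij}=a_ib_j$, whose relations are the $2\times2$ minors of the $2\times3$ matrix $(x_{ij})$. This is standard for a $\Gm$-action with weights $\pm1$, by first fundamental theorem type reasoning or by a direct monomial argument. Hence $\modulispace[0\semistable]{Q_\xi,\vect{d}_\xi}$ is the affine cone over the Segre embedding $\mathbb{P}^1\times\mathbb{P}^2\subset\mathbb{P}^5$, with $0$ mapping to the vertex. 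This matches the fibre $\mathbb{P}^2$ from \cref{lemma:fiber-type-qm-2-second}, since $p_2$ is then locally a small resolution of this 4-dimensional cone.

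The main obstacle is getting the Euler form computation right, in particular confirming that there are no loops. A loop would add a smooth $\mathbb{A}^1$ factor and change the claimed singularity type.
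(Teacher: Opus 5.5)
Your proposal is correct and follows essentially the same route as the paper. Via \cref{proposition:local-quiver} you reduce to the local quiver with two arrows $1\to 2$, three arrows $2\to 1$ and dimension vector $\vect{i}_1+\vect{i}_2$; its quotient by $\Gm$ acting with weights $\pm 1$ has invariant ring generated by the $a_ib_j$ modulo the $2\times 2$ minors, i.e.\ the affine cone over the Segre embedding of $\mathbb{P}^1\times\mathbb{P}^2$. Your explicit Euler-form check that $\langle\vect{e}^4_{pq}+\vect{e}^5_r,\vect{e}^4_{pq}+\vect{e}^5_r\rangle=1$, hence no loops, is a step the paper leaves implicit and is worth keeping. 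In a final write-up, delete the erroneous intermediate value $-3$ and the heuristic appeal to the dimension of the moduli space.
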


\begin{proof}
  For the trivial stability parameter every representation is semistable.
  After dividing by diagonal scalars,
  the effective base change group is~$\Gm$.
  Writing its parameter as~$\lambda=g_2/g_1$,
  the arrows from~$1$ to~$2$ have weight~$1$,
  while the arrows from~$2$ to~$1$ have weight~$-1$.
  Hence the invariant ring is generated by the six products~$a_ib_j$,
  which are the entries of a~$2\times 3$ matrix of rank at most~$1$.
  Their only relations are the~$2\times 2$ minors,
  so~$\modulispace[0\semistable]{Q_\xi,\vect{d}_\xi}$ is
  the affine cone over the Segre embedding of~$\mathbb{P}^1\times\mathbb{P}^2$.
  The claim follows from~\cref{proposition:local-quiver}.
\end{proof}

\begin{theorem}
  \label{theorem:QM2-resolution}
  The map~$p_2\colon\QM{2}\rightarrow \modulispace[\overline{\Theta}_2\semistable]{Q^{(6)},\vect{d}}$
  is a semismall resolution of singularities,
  with fibre~$\mathbb{P}^2$ over each of the ten isolated singularities.
\end{theorem}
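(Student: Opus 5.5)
The plan is to assemble the statement from the analysis of $p_2$ carried out in \cref{subsection:QM2-second-projection}, in the same way the corresponding statement for $p_1$ was assembled.

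First, $p_2$ is projective, because it comes from the projection-to-the-wall construction of \cref{subsection:projection-to-wall}. This applies since $\overline{\Theta}_2$ lies in the closure of the chamber of $\Theta_\can$, as shown earlier. The source $\QM{2}$ is smooth, by \cref{theorem:fano-quiver-moduli}.

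Over the open Luna stratum of type $(\vect{d}^1)$, the local quiver is a single vertex with no loops, because $\langle\vect{d},\vect{d}\rangle=-3$ gives $1-(-3)$ loops? — more precisely, \cref{proposition:fibres-projection-wall} gives a one-point fibre there: a $\overline{\Theta}_2$-stable representation is automatically $\Theta_\can$-stable. So $p_2$ is an isomorphism over this stratum, which is dense. Hence $p_2$ is birational, and the target is smooth there, being a moduli space of stables.

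The remaining strata are the ten types $\xi$ enumerated above, one for each partition $\{k,l\}\sqcup\{p,q,r\}$. I will argue that each Luna stratum $S_\xi$ is a single point. The summands are stable of dimension vectors $\vect{e}^4_{kl}$ and $\vect{e}^4_{pq}+\vect{e}^5_r$. Their moduli dimensions are $1-\langle\vect{e},\vect{e}\rangle$:
\begin{itemize}
  \item for $\vect{e}^4_{kl}=(1,1;1)$ one gets $1-(3-2)=0$;
  \item for $\vect{e}^4_{pq}+\vect{e}^5_r$ (entries $1,1,1$ and $2$ over $2$) one gets $1-(3+4+4-(2+2+2+4))=1-1=0$.
\end{itemize}
Each summand is therefore rigid and unique up to isomorphism, so $S_\xi$ is a point. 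This matches the consistency check that the local quiver has $2$ and $3$ arrows. As a cross-check, $\langle\vect{e}^4_{kl},\vect{f}\rangle=-2$ and $\langle\vect{f},\vect{e}^4_{kl}\rangle=-3$, agreeing with \cref{lemma:fiber-type-qm-2-second}.

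This yields ten isolated points. By \cref{lemma:singularity-qm-2-second}, each is étale-locally the vertex of the cone over $\mathbb{P}^1\times\mathbb{P}^2$, which is genuinely singular since that cone is not smooth. By \cref{lemma:fiber-type-qm-2-second}, the fibre over each of them is $\mathbb{P}^2$.

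Semismallness is then a dimension count. Over the open stratum we have fibre dimension $0$ and stratum dimension $4$. Over the point strata the fibre dimension is $2$, and $2\cdot 2\le 4-0$. So $2\dim p_2^{-1}(x)+\dim S\le 4$ holds on every stratum.

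The main obstacle I expect is confirming that the list of decomposition types is complete and that each stratum really is zero-dimensional, that is, the uniqueness of the stable representations of the two summand dimension vectors. I would settle this by the Euler-form computation above, together with \cref{lemma:stable-general-position} for existence.
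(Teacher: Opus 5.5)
Your proposal is correct and follows the paper's proof essentially step for step. The shared steps are: a one-point fibre over the open stratum; the ten proper strata being points because both stable summands satisfy $\langle\vect{e},\vect{e}\rangle=1$; the fibres and singularities read off from \cref{lemma:fiber-type-qm-2-second,lemma:singularity-qm-2-second}; and the count $2\cdot 2+0\leq 4$ for semismallness. The only blemish is your aborted remark about the local quiver at the open stratum: it actually has $1-\langle\vect{d},\vect{d}\rangle=4$ loops rather than none, though the nilpotent fibre is still a point since nilpotent scalars vanish, so simply delete that clause.
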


\begin{proof}
  Over the open Luna stratum~$(\vect{d}^1)$,
  the fibre is a point.
  The proper Luna strata are the ten points indexed by the pairs~$\{k,l\}$:
  they are indeed points, both stable summands being rigid,
  as we have that~$\langle\vect{e}^4_{kl},\vect{e}^4_{kl}\rangle=\langle\vect{e}^4_{pq}+\vect{e}^5_r,\vect{e}^4_{pq}+\vect{e}^5_r\rangle=1$.
  The corresponding fibres are projective planes by~\cref{lemma:fiber-type-qm-2-second}.
  By~\cref{lemma:singularity-qm-2-second},
  these are precisely the singular points of the target.
  Since~$\QM{2}$ is smooth and~$0+2\cdot2=4$,
  the map is a semismall resolution.
\end{proof}

This is the quiver counterpart of \cite[Proposition~4.2]{MR4739832}:
under the identification of~$\QM{2}$ with the Segre cousin from \cref{theorem:segre-cousin},
the morphism~$p_2$ corresponds to the small resolution~$\rho_2\colon X\to\Gr(3,V_5)$
recalled in \cref{subsection:segre-cousin}.

\section{The variety \QM{3}: a Segre cousin once-removed}
\label{section:QM3}
We are still considering the quiver~$Q^{(6)}$,
but now with dimension vector~$\vect{d}=(1^4,2^2;3)$.
This moduli space has the same Betti numbers~$(1,6,17,6,1)$ as the previous one,
but it is not isomorphic to it: the Chern numbers in \cref{table:overview} differ.
It does, however, have a similar structure, which we will now analyze.

\subsection{Projections to the wall}
\label{subsection:QM3-projections}
Besides the canonical stability parameter
\begin{equation}
  \Theta=3\sum_{k=1}^6\vect{i}_k^*-8\vect{j}^*,
\end{equation}
we consider the stability parameters
\begin{equation}
  \begin{aligned}
    \overline{\Theta}_1&=2\sum_{k=1}^4\vect{i}_k^*+\vect{i}_5^*+\vect{i}_6^*-4\vect{j}^* \\
    \overline{\Theta}_2&=\sum_{k=1}^4\vect{i}_k^*+2\vect{i}_5^*+2\vect{i}_6^*-4\vect{j}^*.
  \end{aligned}
\end{equation}
As before, we have the following for these choices.
Instead of the proof given below,
one can also appeal to \cite{quivertools},
which implements \cite{MR5007902}.
\begin{lemma}
  The stability parameter~$\Theta$ does not belong to a proper wall,
  whilst~$\overline{\Theta}_1$ and~$\overline{\Theta}_2$ belong to the closure of the chamber containing~$\Theta$.
\end{lemma}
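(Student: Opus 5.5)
I will follow the template of the corresponding lemma for $\QM{2}$. Write a proper non-zero subdimension vector $\vect{e}\leq\vect{d}=(1^4,2^2;3)$ as $\vect{e}=\sum_{k\in K}\vect{i}_k+e_5\vect{i}_5+e_6\vect{i}_6+d'\vect{j}$ with $|K|=f\leq 4$, $0\leq e_5,e_6\leq 2$, $0\leq d'\leq 3$, and put $g=e_5+e_6$. Then
\begin{equation}
  \Theta(\vect{e})=3(f+g)-8d',\qquad
  \overline{\Theta}_1(\vect{e})=2f+g-4d',\qquad
  \overline{\Theta}_2(\vect{e})=f+2g-4d'.
\end{equation}
These depend only on the triple $(f,g,d')$ with $(0,0,0)\lneqq(f,g,d')\lneqq(4,4,3)$. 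For the first claim, $\Theta(\vect{e})=0$ forces $3\mid 8d'$, so $d'\in\{0,3\}$. If $d'=0$ then $f+g=0$, so $\vect{e}=0$. If $d'=3$ then $f+g=8$, so $\vect{e}=\vect{d}$. Hence no proper wall contains $\Theta$, and in fact $\Theta(\vect{e})\neq0$ for every proper non-zero $\vect{e}$.

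For $\overline{\Theta}_1$ and $\overline{\Theta}_2$ I will use \cref{lemma:projection}. I list all triples with $\Theta(\vect{e})<0$, i.e.\ $3(f+g)<8d'$, organised by $d'$:
\begin{itemize}
  \item $d'=1$: $f+g\leq 2$;
  \item $d'=2$: $f+g\leq 5$;
  \item $d'=3$: $f+g\leq 7$.
\end{itemize}
In each case I check whether $2f+g\leq 4d'$, resp.\ $f+2g\leq 4d'$, can fail.
\begin{itemize}
  \item For $\overline{\Theta}_1$: the case $d'=1$ gives $2f+g\leq 4$, and $d'=3$ gives $2f+g\leq 4+7=11<12$. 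For $d'=2$ the only failure is $f=4$, $g=1$, giving $9>8$.
  \item For $\overline{\Theta}_2$: the case $d'=1$ gives $f+2g\leq 4$. For $d'=3$ we need $f+2g\leq 12$, and since $f+g\leq 7$ with $g\leq 4$ this holds, because $f+2g\leq 7+4=11$. For $d'=2$ the only failure is $f=1$, $g=4$, giving $9>8$.
\end{itemize}

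The main step is excluding these few bad triples via the criterion of \cref{lemma:projection}.
\begin{itemize}
  \item For $(f,g,d')=(1,4,2)$, i.e.\ $\vect{e}=\vect{i}_k+2\vect{i}_5+2\vect{i}_6+2\vect{j}$: in $\mathbb{C}^2$ the two $2$-dimensional spaces at $i_5,i_6$ meet non-trivially in the kernel of $\mathbb{C}^4\to\mathbb{C}^2$. So $\vect{e}'=\vect{i}_5+\vect{i}_6$ satisfies $\vect{e}'\hookrightarrow\vect{e}$, and $\Theta(\vect{e}')=6>0$, as required.
  \item For $(4,1,2)$, say $\vect{e}=\sum_{k=1}^4\vect{i}_k+\vect{i}_5+2\vect{j}$: the complement is $\vect{d}-\vect{e}=\vect{i}_5+2\vect{i}_6+\vect{j}$. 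This surjects onto $\vect{f}'=\vect{j}$, since the quotient by $V_{i_5}$, $V_{i_6}$ and their images in $\mathbb{C}$ gives $\vect{j}$ only when those images vanish. I will instead use $\vect{d}-\vect{e}\twoheadrightarrow\vect{i}_5+2\vect{i}_6$ or a similar quotient with $\Theta(\vect{f}')<0$. Concretely, every representation of $\vect{i}_5+2\vect{i}_6+\vect{j}$ has a subrepresentation containing the kernel at $i_6$, which has dimension at least $1$, so $\vect{i}_6\hookrightarrow\vect{d}-\vect{e}$. Hence $\vect{d}-\vect{e}\twoheadrightarrow\vect{i}_5+\vect{i}_6+\vect{j}$ with $\Theta=6-8=-2<0$.
\end{itemize}
Similar reasoning, possibly with a short case list, handles any other bad triples that turn up once the inequalities are checked carefully. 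This case-by-case verification is where I expect the bookkeeping to be the main obstacle. \cref{lemma:projection} then yields the claim. Alternatively, the claim can be verified with \quivertools.
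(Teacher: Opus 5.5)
Your overall strategy matches the paper's, and several parts are correct: the argument that $\Theta$ lies on no wall, the reduction to triples $(f,g,d')$, and the enumeration showing that the only failures are $(4,1,2)$ for $\overline{\Theta}_1$ and $(1,4,2)$ for $\overline{\Theta}_2$. Your final argument for $(4,1,2)$ is also sound, namely $\vect{i}_6\hookrightarrow\vect{i}_5+2\vect{i}_6+\vect{j}$, hence the factor $\vect{i}_5+\vect{i}_6+\vect{j}$ with $\Theta=-2$.

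The gap is in the $\overline{\Theta}_2$ case. The claim $\vect{i}_5+\vect{i}_6\hookrightarrow\vect{i}_k+2\vect{i}_5+2\vect{i}_6+2\vect{j}$ is false. A subrepresentation of dimension vector $\vect{i}_5+\vect{i}_6$ is zero at $j$, so it must lie in $\ker W(\alpha_5)\oplus\ker W(\alpha_6)$. The kernel of the combined map $W_{i_5}\oplus W_{i_6}\to W_j$ is not a subrepresentation of the quiver. For a general representation both $W(\alpha_5),W(\alpha_6)\colon\mathbb{C}^2\to\mathbb{C}^2$ are isomorphisms, so no such subrepresentation exists.

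Switching to the factor criterion does not help either. A general representation of the complement $\sum_{l\neq k}\vect{i}_l+\vect{j}$ has no non-zero factor $\vect{f}'$ with $\Theta(\vect{f}')<0$: its factors are either the whole representation, with $\Theta=1$, or supported at sources.

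The fix is the paper's witness $\vect{e}'=\vect{i}_k+\vect{i}_5+\vect{i}_6+\vect{j}$. Choose a line $L\subseteq W_j\cong\mathbb{C}^2$ containing the image of $W_{i_k}$. Its preimages under $W(\alpha_5)$ and $W(\alpha_6)$ are at least one-dimensional. Lines in them, together with $W_{i_k}$ and $L$, give the required subrepresentation, and $\Theta(\vect{e}')=9-8=1>0$.

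Two cleanups are also needed. In the $(4,1,2)$ bullet, delete the false starts: $\vect{d}-\vect{e}\twoheadrightarrow\vect{j}$ fails generically, and $\Theta(\vect{i}_5+2\vect{i}_6)=9>0$. Also drop the closing hedge about ``other bad triples''. Your own enumeration already shows there are none, up to the symmetries $e_5\leftrightarrow e_6$ and the choice of $K$.
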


\begin{proof}
  Consider a non-zero proper dimension vector~$\vect{e}\leq\vect{d}$ given as
  \begin{equation}
    \vect{e}=\sum_{k\in K}\vect{i}_k+e'\vect{i}_5+e''\vect{i}_6+d\vect{j}
  \end{equation}
  for a subset~$K\subset\{1,2,3,4\}$ of cardinality~$f$ such that
  \begin{equation}
    (0,0,0,0)\lneqq(f,e',e'',d)\lneqq(4,2,2,3).
  \end{equation}
  Assume that~$\Theta(\vect{e})\leq 0$, thus~$3(f+e'+e'')\leq 8d$,
  so we have strict inequality,
  proving that~$\Theta$ does not belong to a proper wall.

  If~$\overline{\Theta}_1(\vect{e})>0$,
  that is,~$2f+e'+e''>4d$, then~$(f,e',e'',d)\in\{(4,1,0,2),(4,0,1,2)\}$.
  So assume that~$\vect{e}=\sum_{k=1}^4\vect{i}_k+\vect{i}_5+2\vect{j}$,
  and thus~$\vect{d}-\vect{e}=\vect{i}_5+2\vect{i}_6+\vect{j}$.
  A representation with this dimension vector admits
  a factor of dimension vector~$\vect{f}'=\vect{i}_5+\vect{i}_6+\vect{j}$,
  and~$\Theta(\vect{f}')=-2$.
  By~\cref{lemma:projection},
  we see that~$\overline{\Theta}_1$ belongs to the closure of the chamber containing~$\Theta$.

  Similarly we proceed for~$\overline{\Theta}_2$.
  If~$3(f+e'+e'')\leq 8d$, but~$f+2(e'+e'')>4d$, then~$(f,e',e'',d)=(1,2,2,2)$ by direct inspection.
  This then shows that every representation of dimension vector~$\vect{i}_k+2\vect{i}_5+2\vect{i}_6+2\vect{j}$
  contains a subrepresentation of dimension vector~$\vect{e}'=\vect{i}_k+\vect{i}_5+\vect{i}_6+\vect{j}$,
  and~$\Theta(\vect{e}')>0$.
  Again by~\cref{lemma:projection},
  we see that~$\overline{\Theta}_2$ belongs to the closure of the chamber containing~$\Theta$.
\end{proof}

\subsection{First projection}
\label{subsection:QM3-first-projection}
We describe the Luna strata for~$\overline{\Theta}_1$.
The only proper non-zero dimension vectors~$\vect{e}\leq\vect{d}$
containing~$\vect{j}$ with multiplicity one,
satisfying~$\overline{\Theta}_1(\vect{e})=0$,
and admitting a~$\overline{\Theta}_1$-stable representation~(cf.~\cref{lemma:stable-general-position}) are the
\begin{equation}
  \vect{e}^1_k=\vect{i}_k+\vect{i}_5+\vect{i}_6+\vect{j},\, k=1,\ldots,4,\; \vect{e}^2_{kl}=\vect{i}_k+\vect{i}_l+\vect{j},\, 1\leq k<l\leq 4.
\end{equation}
Then one can easily list the Luna strata besides~$(\vect{d}^1)$ as
\begin{equation}
  ((\vect{e}^1_k)^1,(\vect{d}-\vect{e}^1_k)^1)
\end{equation}
for~$k=1,2,3,4$ and
\begin{equation}
  ((\vect{e}^1_k)^1,(\vect{e}^1_l)^1,(\vect{e}^2_{pq})^1)
\end{equation}
for~$\{k,l,p,q\}=\{1,2,3,4\}$.

In the first case, the local quiver is the \emph{opposite} of the one underlying \eqref{quiver122},
with~$\vect{d}_\xi=\vect{i}_1+\vect{i}_2$ and~$\Theta_\xi=\vect{i}_1^*-\vect{i}_2^*$:
indeed, here we have~$\Theta(\vect{e}^1_k)=1$ and~$\Theta(\vect{d}-\vect{e}^1_k)=-1$,
opposite in sign to the situation in \cref{subsection:QM2-first-projection}.
Passing to the opposite quiver does not change the moduli spaces by \cref{lemma:linear-duality},
so by \cref{lemma:fiber-type-qm-2-1,lemma:stratum-qm-2-1},
the corresponding moduli space of semistable nilpotent representations is again~$\mathbb{P}^1$,
and the moduli space for the trivial stability parameter is smooth,
thus there are no singularities in the corresponding Luna strata.

In the second case, the local quiver is the opposite of the one underlying \eqref{quiver1212},
with~$\vect{d}_\xi=\vect{i}_1+\vect{i}_2+\vect{i}_3$ and~$\Theta_\xi=\vect{i}_1^*-2\vect{i}_2^*+\vect{i}_3^*$,
as~$\Theta(\vect{e}^1_k)=\Theta(\vect{e}^1_l)=1$ and~$\Theta(\vect{e}^2_{pq})=-2$.
Again combining \cref{lemma:linear-duality}
with \cref{lemma:fiber-type-qm-2-2,lemma:stratum-qm-2-2},
the corresponding moduli space is~$\mathbb{P}^1\times\mathbb{P}^1$, as before,
and the moduli space for the trivial stability parameter
is also smooth.
We have thus proved:

\begin{proposition}
  The projection~$p_1\colon\QM{3}\rightarrow \modulispace[\overline{\Theta}_1\semistable]{Q^{(6)},\vect{d}}$ is birational with smooth target,
  with fibre over four punctured surfaces isomorphic to~$\mathbb{P}^1$,
  and fibre over six points isomorphic to~$\mathbb{P}^1\times\mathbb{P}^1$.
  In particular, it is semismall.
\end{proposition}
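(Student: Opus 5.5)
The proof follows the model of the corresponding proposition for~$\QM{2}$, applied to the Luna stratification of~$\modulispace[\overline{\Theta}_1\semistable]{Q^{(6)},\vect{d}}$ just described. There are three things to check.

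First, birationality. Over the open Luna stratum~$(\vect{d}^1)$ the local quiver is a single vertex without loops, with dimension vector~$\vect{i}_1$. So by \cref{proposition:fibres-projection-wall} the fibre is a point. By \cref{lemma:projection} the $\Theta$-stable locus is contained in the $\overline{\Theta}_1$-semistable locus, so this open stratum is non-empty and dense. Hence~$p_1$ is birational. The open stratum is smooth, since it consists of $\overline{\Theta}_1$-stable representations of an acyclic quiver.

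Second, the proper strata. For the four types~$((\vect{e}^1_k)^1,(\vect{d}-\vect{e}^1_k)^1)$ and the six types~$((\vect{e}^1_k)^1,(\vect{e}^1_l)^1,(\vect{e}^2_{pq})^1)$, I compute the local quivers from the Euler form. I need the numbers~$-\langle\vect{e}^1_k,\vect{d}-\vect{e}^1_k\rangle$ and so on, and I expect these to reproduce the opposites of \eqref{quiver122} and \eqref{quiver1212}, as asserted in the preceding discussion. Then \cref{lemma:linear-duality}, combined with \cref{lemma:fiber-type-qm-2-1,lemma:fiber-type-qm-2-2}, gives fibres~$\mathbb{P}^1$ and~$\mathbb{P}^1\times\mathbb{P}^1$. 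Combined with \cref{lemma:stratum-qm-2-1,lemma:stratum-qm-2-2} and \cref{proposition:local-quiver}, it shows the target is smooth along these strata, since the relevant invariant rings are polynomial rings.

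Third, the dimensions of the strata. The stratum of the first type has dimension~$\dim\modulispace{\vect{e}^1_k}+\dim\modulispace{\vect{d}-\vect{e}^1_k}$, computed as~$(1-\langle\vect{e},\vect{e}\rangle)$ summed over the summands. For~$\vect{e}^1_k=\vect{i}_k+\vect{i}_5+\vect{i}_6+\vect{j}$ we get~$\langle\vect{e},\vect{e}\rangle=4-3=1$, so this summand contributes~$0$. For~$\vect{d}-\vect{e}^1_k=(1^3,1,1;2)$ we get~$\langle\vect{e},\vect{e}\rangle=5+4-10=-1$, so this summand contributes~$2$. The stratum is therefore a surface, an open subset of~$\modulispace{\vect{d}-\vect{e}^1_k}$; it is "punctured" because the points where this summand degenerates further lie in the second type of stratum. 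The summands~$\vect{e}^1_k$ and~$\vect{e}^2_{pq}$ are rigid, since~$\langle\vect{e}^2_{pq},\vect{e}^2_{pq}\rangle=3-2=1$. So the second type of stratum consists of points, one for each of the~$\binom42=6$ choices of~$\{k,l\}$.

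Semismallness then follows: a surface stratum with~$1$-dimensional fibres satisfies~$2+2\cdot1\le4$, and a point stratum with~$2$-dimensional fibres satisfies~$0+2\cdot2\le4$.

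The main obstacle is making sure the local-quiver identification is exact, that is, getting the arrow counts and the sign of~$\Theta_\xi$ right. I also need to confirm that the listed decomposition types are exhaustive. For that I would list all solutions of~$2f+e'+e''=4d$ in the range of~$\vect{e}\le\vect{d}$, and discard those admitting no stable representations because their Euler form is~$\ge2$ or they fail \cref{lemma:stable-general-position}.
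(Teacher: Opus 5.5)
Your proposal is correct and follows the paper's argument. The Euler-form computation you plan does give the opposites of \eqref{quiver122} and \eqref{quiver1212}, with $\Theta_\xi=\vect{i}_1^*-\vect{i}_2^*$ and $\vect{i}_1^*-2\vect{i}_2^*+\vect{i}_3^*$, so \cref{lemma:linear-duality} transports the fibre and smoothness results from $\QM{2}$, and semismallness is exactly your dimension count.

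There are two small slips to repair. First, the local quiver at the open stratum has $1-\langle\vect{d},\vect{d}\rangle=4$ loops, not none; the fibre is still a point, since nilpotency forces loops on a one-dimensional space to vanish. Second, the containment from \cref{lemma:projection} does not by itself show that $\overline{\Theta}_1$-stable representations exist: the same containment holds for $\overline{\Theta}_2$, which has no stables by \cref{remark:QM3-second-projection}. You should instead check directly that a general tuple $(v_1,\ldots,v_4,A,B)$, with any three $v_k$ a basis and no $v_k$ in the column span of $A$ or of $B$, is $\overline{\Theta}_1$-stable.
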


We consider a representation of~$Q^{(6)}$ of dimension vector~$\vect{d}$ as a tuple
\begin{equation}
  (v_1,\ldots,v_4,A,B)
\end{equation}
of vectors~$v_i\in\mathbb{C}^3$ and two~$3\times 2$-matrices~$A,B$, which we represent as the~$3\times 8$-matrix
\begin{equation}
  M=[v_1\mid v_2\mid v_3\mid v_4\mid A\mid B].
\end{equation}
We consider the semi-invariants
\begin{equation}
  D_i=\det[v_p\mid v_q\mid v_r]
\end{equation}
for~$i=1,\ldots,4$ and~$\{1,2,3,4\}\setminus\{i\}=\{p<q<r\}$, as well as
\begin{equation}
  E_i^A=\det[v_i\mid A],\; E_i^B=\det[v_i\mid B]
\end{equation}
for~$i=1,\ldots,4$. We then find semi-invariants
\begin{equation}
  S_{kl}=D_kE_k^AD_lE_l^B
\end{equation}
for~$k,l=1,\ldots,4$ of weight~$\overline{\Theta}_1$.

\begin{lemma}
  The map from~$\repspace[\overline{\Theta}_1\semistable]{Q^{(6)},\vect{d}}$ to~$\mathbb{P}^{15}$
  given by these coordinates is well-defined.
\end{lemma}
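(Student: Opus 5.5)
Assume, for a contradiction, that all $S_{kl}=D_kE^A_kD_lE^B_l$ vanish on a $\overline{\Theta}_1$-semistable $M$, and follow the proof of the analogous statement for $\QM{2}$. Since $S_{kl}$ factors as $(D_kE^A_k)(D_lE^B_l)$, the vanishing of all $S_{kl}$ is equivalent to: either $D_kE^A_k=0$ for all $k$, or $D_lE^B_l=0$ for all $l$. By the symmetry exchanging vertices $i_5$ and $i_6$, which preserves $\overline{\Theta}_1$ and $\vect{d}$, it suffices to treat the case $D_kE^A_k=0$ for all $k=1,\ldots,4$. Then the goal is to exhibit a destabilising subrepresentation, that is, a subrepresentation of dimension vector $\vect{e}$ with $\overline{\Theta}_1(\vect{e})>0$, i.e.\ with $2f+e'+e''>4\dim$ in the notation of \cref{subsection:QM3-projections}.

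First, the basic constraints imposed by semistability. By the argument of \cref{lemma:stable-implies-injective} (applied to $\overline{\Theta}_1$, whose values at the $i_k$ are positive), all $v_k\neq 0$ and $A,B$ have rank two. Let $H_A,H_B$ be their column spans. Now collect the following forbidden configurations, each giving a subrepresentation with $\overline{\Theta}_1>0$:
\begin{enumerate}
\item Two vectors $v_k,v_l$ collinear gives a line containing $v_k,v_l$, with $\vect{e}=\vect{i}_k+\vect{i}_l+\vect{j}$ of value $0$. This is not destabilising, so we must be careful.
\item Three $v$'s in a plane together with $H_A$ equal to that plane gives $(3,2,0,2)$ with value $6+2-8=0$, again borderline.
\item All four $v_k$ in $H_A$ gives $8+2=10>8$, which is destabilising.
\item Three $v_k$ in $H_A$ gives $6+2=8$, borderline.
\item $v_1,\ldots,v_4$ all in some plane $P$: $P\cap H_A$ and $P\cap H_B$ contribute at least $e'=e''=1$, giving $8+1+1=10>8$, destabilising. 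So the $v_k$ span $\mathbb{C}^3$.
\item A $v_k$ lying in $H_A\cap H_B$ (if $H_A\neq H_B$): $(1,1,1,1)$, value $2+2=4$, borderline.
\item $H_A=H_B$ containing two $v$'s: $(2,2,2,2)$, value $4+4=8$, borderline.
\end{enumerate}
So only strict inequalities kill configurations, and the delicate part is that many degenerate configurations sit exactly on walls, hence are semistable.

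Then the case analysis. For each $k$, either $D_k=0$ (the three $v$'s other than $v_k$ are coplanar) or $v_k\in H_A$. Since the $v$'s span $\mathbb{C}^3$, some $D_k\neq 0$. By (iii), not all four $v_k$ lie in $H_A$. Suppose $v_1,v_2,v_3$ form a basis, so $D_4\neq0$ and hence $v_4\in H_A$. For each $k\le 3$, either $v_k\in H_A$ or $v_4$ lies in the plane spanned by the two remaining basis vectors. Since $v_4\neq 0$, at most two of those coordinate planes contain $v_4$. I would enumerate the possibilities by the support of $v_4$ in the basis $v_1,v_2,v_3$, and in each case identify which $v$'s lie in $H_A$, aiming for a plane containing three $v$'s that equals $H_A$ plus extra data, or a line containing two $v$'s and meeting $H_B$, so as to produce strict inequality. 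For example, if $v_4$ is collinear with $v_1$, then $D_2=D_3=0$ need not hold, forcing $v_2,v_3\in H_A$ as well, which puts three $v$'s in $H_A$; adding that $v_1\in H_A$ too (being collinear with $v_4$) gives all four in $H_A$, a contradiction.

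The main obstacle is this finite case analysis. The worry is that the borderline (value zero) configurations, which are strictly semistable, might genuinely make all $S_{kl}$ vanish. In that case the lemma would need the actual decomposition-type analysis, and I would check polystable representations of the types listed in \cref{subsection:QM3-first-projection} directly, verifying that some $S_{kl}$ is nonzero on each of them. This suffices because the nonvanishing of a semi-invariant at a point is equivalent to its nonvanishing on the closed orbit in that point's orbit closure.
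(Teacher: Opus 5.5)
You follow the same route as the paper: reduce to $D_kE^A_k=0$ for all $k$, use injectivity of the arrows, show that the $v_k$ span $\mathbb{C}^3$, pick a basis $v_1,v_2,v_3$ so that $v_4\in H_A$, and then do a case analysis. However, you do not finish, and you stall because two of your ``borderline'' evaluations are wrong.

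Any subrepresentation with $U_j=H_A$ automatically has $e''=\dim B^{-1}(H_A)=\dim(H_A\cap H_B)\geq 1$, because two planes in $\mathbb{C}^3$ always meet. You used exactly this free contribution in your item (v), but forgot it in items (ii) and (iv). So three of the $v_k$ lying in $H_A$ give $(f,e',e'',d)=(3,2,\geq 1,2)$, with $\overline{\Theta}_1$-value at least $1$. This configuration is destabilising, not borderline, and it is precisely the paper's key step: at most two of the $v_k$ lie in $H_A$. Similarly, in item (i), two collinear vectors whose line lies in $H_A$ give $e'=1$ and value at least $1$, so they are destabilising as well.

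Your worked example is also incorrect. If $v_4$ is collinear with $v_1$, then $v_4$ lies in both $\langle v_1,v_2\rangle$ and $\langle v_1,v_3\rangle$. So $D_2=D_3=0$ \emph{do} hold, and nothing forces $v_2,v_3$ into $H_A$. The correct contradiction there is the line $\langle v_1\rangle\subseteq H_A$.

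With these two facts the case analysis closes at once. Write $v_4=av_1+bv_2+cv_3$. Then $D_1=a\det[v_2\mid v_3\mid v_1]$, and similarly for $D_2,D_3$, so each non-zero coefficient puts the corresponding basis vector into $H_A$. Three non-zero coefficients would put a basis into $H_A$. Two put three of the $v_k$ into $H_A$. One makes $v_4$ collinear with some $v_m\in H_A$. All three cases are excluded, so no semistable representation, strictly semistable or not, makes all $S_{kl}$ vanish; your worry about borderline configurations is unfounded.

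Your fallback via polystable types is valid in principle: a semi-invariant of weight a multiple of $\overline{\Theta}_1$ is non-zero at a point if and only if it is non-zero at the polystable point of its S-equivalence class. As written, though, it is only a plan. It would still require checking the whole stable locus and each strictly polystable type, so it does not supply the missing argument.
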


\begin{proof}
  Suppose, to the contrary, that all~$S_{kl}$ evaluate to zero on a matrix~$M$.
  Without loss of generality, we can assume that all~$D_kE_k^A$ evaluate to zero.
  For a~$\overline{\Theta}_1$-semistable representation,
  all arrows are represented by injections,
  since the kernel of any arrow would define a subrepresentation supported at a source vertex,
  on which~$\overline{\Theta}_1$ is positive.
  Thus all vectors~$v_i$ are non-zero,
  and the matrices~$A,B$ have rank~$2$;
  let~$H_A,H_B\subset \mathbb{C}^3$ be the span of the columns of~$A$ and~$B$, respectively.

  The vectors~$v_1,\ldots,v_4$ span~$\mathbb{C}^3$:
  otherwise their span is contained in a hyperplane~$H'$, which intersects~$H_A$ in a non-zero vector~$v$.
  This induces a subrepresentation of dimension vector~$\sum_{k=1}^4\vect{i}_k+\vect{i}_5+2\vect{j}$, a contradiction.
  Thus there exists a three-element subset~$K\subset\{1,\ldots,4\}$ such that~$(v_k)_{k\in K}$ forms a basis of~$\mathbb{C}^3$.

  By assumption,~$v_l\in H_A$ for the index~$l\not\in K$.
  At most two of the vectors~$v_1,\ldots,v_4$ are contained in~$H_A$:
  if three vectors~$v_p,v_q,v_r$ are contained in~$H_A$
  and~$v$ is a non-zero vector in the intersection of~$H_A$ and~$H_B$,
  we find a subrepresentation of dimension vector~$\vect{i}_p+\vect{i}_q+\vect{i}_r+2\vect{i}_5+\vect{i}_6+2\vect{j}$, a contradiction.
  Assume there is a vector~$v_l$ besides~$v_k$ contained in~$H_A$.
  If~$v_k$ and~$v_l$ are linearly dependent,
  we find a subrepresentation of dimension vector~$\vect{i}_k+\vect{i}_l+\vect{i}_5+\vect{j}$, a contradiction.
  Thus~$v_k$ and~$v_l$ span~$H_A$, and~$\{1,2,3,4\}=\{k,l,p,q\}$.
  Then~$v_k,v_l,v_p$ as well as~$v_k,v_l,v_q$ are linearly dependent by assumption, again a contradiction.
\end{proof}

We easily identify the subvariety of~$\mathbb{P}^{15}$ defined by the above coordinates~$S_{kl}$
as the image of the Segre embedding of~$\mathbb{P}^3\times\mathbb{P}^3$ by factoring
\begin{equation}
  S_{kl}=X_k^AX_l^B,\; X_k^A=D_kE_k^A,\; X_k^B=D_kE_k^B.
\end{equation}
The Pl\"ucker relations for the matrix~$M$ show that, additionally,
\begin{equation}
  \sum_{k=1}^4(-1)^kX_k^A=0=\sum_{k=1}^4(-1)^kX_k^B.
\end{equation}

We thus have a well-defined map~$p\colon\modulispace[\overline{\Theta}_1\semistable]{Q^{(6)},\vect{d}}\rightarrow\mathbb{P}^2\times\mathbb{P}^2$.
\begin{lemma}
  \label{lemma:QM3-target-quadrics}
  The map~$p\colon\modulispace[\overline{\Theta}_1\semistable]{Q^{(6)},\vect{d}}\rightarrow\mathbb{P}^2\times\mathbb{P}^2$
  is an isomorphism.
  It maps the closures of the four two-dimensional Luna strata
  to the four quadric surfaces
  \begin{equation}
    Q_k=\{X_k^A=0\}\times\{X_k^B=0\}\cong\mathbb{P}^1\times\mathbb{P}^1,
  \end{equation}
  and the six point strata to the six pairwise intersection points~$Q_k\cap Q_l$.
\end{lemma}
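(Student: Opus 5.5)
I will follow the proof of \cref{lemma:QM2-target-grassmannian}. First I check that the target is normal and that $p$ is proper and birational. Both $\modulispace[\overline{\Theta}_1\semistable]{Q^{(6)},\vect{d}}$ and $\mathbb{P}^2\times\mathbb{P}^2$ are projective of dimension~$4$. Here $\mathbb{P}^2\times\mathbb{P}^2$ is cut out inside $\mathbb{P}^3\times\mathbb{P}^3$ by the two linear relations $\sum_k(-1)^kX_k^A=0=\sum_k(-1)^kX_k^B$, and it is smooth. So once $p$ is bijective, Zariski's main theorem makes it an isomorphism; smoothness of the source was already shown in the preceding proposition. I will prove bijectivity one Luna stratum at a time, by normalizing the matrix $M=[v_1\mid\cdots\mid v_4\mid A\mid B]$ with $\GL_3$ together with the base change at the source vertices.

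\emph{Open stratum.} Let the representation be $\overline{\Theta}_1$-stable. Then no $v_k$ lies in $H_A$ or in $H_B$, since otherwise there would be a subrepresentation of dimension vector $\vect{i}_k+\vect{i}_5+\vect{j}$ or $\vect{i}_k+\vect{i}_6+\vect{j}$, which has $\overline{\Theta}_1$-value $2+1-4\cdot 1$, so this needs care. The more relevant condition is $v_k\notin H_A\cap H_B$ together with the absence of the subrepresentation $\vect{e}^1_k$. Also $H_A\neq H_B$, so $H_A\cap H_B$ is a line $\ell$. I normalize $\ell=\langle e_1\rangle$, $H_A=\langle e_1,e_2\rangle$ and $H_B=\langle e_1,e_3\rangle$, and rescale the $v_k$ to have first coordinate~$1$, assuming $v_k\notin\langle e_2,e_3\rangle$ (which has to be checked). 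The remaining data are $(a_k,b_k)$ with $v_k=(1,a_k,b_k)$, up to the residual torus $\Gm^2$ scaling the second and third coordinates, together with translations that must be fixed. I then compute $X_k^A\propto D_kb_k$ and $X_k^B\propto D_ka_k$, and show that these recover the point of $\mathbb{P}^2\times\mathbb{P}^2$ and conversely. Concretely, I expect that $(X^A_k)_k$ determines the vector $(b_k)$ up to the affine relations coming from the linear relation, and similarly for $(a_k)$. This stratum is the main obstacle: choosing the right normal form and showing that the coordinates determine it uniquely, up to the stabilizer, takes the most work. If this gets messy, I will try instead to show that $p$ is quasi-finite and birational by exhibiting an explicit inverse on a dense open set, which together with normality suffices.

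\emph{Stratum $((\vect{e}^1_k)^1,(\vect{d}-\vect{e}^1_k)^1)$.} Here $v_k$ spans $\ell=H_A\cap H_B$. Then $D_lE_l^A$ and $D_lE_l^B$ vanish for all $l$ only when $l=k$: the vanishing $E_k^A=E_k^B=0$ gives $X_k^A=X_k^B=0$, so the image lies in $Q_k$. The summand $\vect{d}-\vect{e}^1_k$ amounts to three vectors $v_l$ in $\mathbb{C}^3/\ell\cong\mathbb{C}^2$ together with two lines $H_A/\ell$ and $H_B/\ell$. Normalizing as in the $\QM{2}$ case, the remaining coordinates read off a point of $\mathbb{P}^1\times\mathbb{P}^1$, each factor coming from the cross-ratio-type position of the three $v_l$ relative to one of the lines. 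So $p$ restricts to a bijection onto an open subset of $Q_k$.

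\emph{Point strata $((\vect{e}^1_k)^1,(\vect{e}^1_l)^1,(\vect{e}^2_{pq})^1)$.} These representations are unique, and they satisfy $X_k=X_l=0$ in both factors, so they map to $Q_k\cap Q_l$. Combining the strata, $p$ is bijective, and the closure statements follow.
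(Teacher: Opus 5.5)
Your global strategy (normality of $\mathbb{P}^2\times\mathbb{P}^2$, Zariski's main theorem, bijectivity checked Luna stratum by Luna stratum) is the paper's. However, the step you flag as the main obstacle, injectivity on the open stratum, is not carried out, and the normal form you propose cannot work.

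\textbf{The normal form on the open stratum.} Your normal form relies on $H_A\neq H_B$, which is false for $\overline{\Theta}_1$-stable representations. Take $v_1,\ldots,v_4=e_1,e_2,e_3,e_1+e_2+e_3$ and $A=B$ with column space a plane $H$ containing none of the $v_k$. Any three of the $v_k$ are independent and none lies in $H$, so every proper subrepresentation has negative $\overline{\Theta}_1$-value: lines in $H$ give $-2$, $H$ itself gives $-4$, everything else is at most $-2$. Since $E_k^A$ and $E_k^B$ are then proportional, these stable points fill an open part of the diagonal of $\mathbb{P}^2\times\mathbb{P}^2$, which your normal form misses. (Likewise $v_k\in H_A$ genuinely occurs on the stable locus, as your own value $-1$ shows.) Even when $H_A\neq H_B$, the stabilizer of $(\ell,H_A,H_B)$ contains the unipotent maps $(x,y,z)\mapsto(x+sy+tz,y,z)$. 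So ``first coordinate $1$'' is not a normal form, and the claim that the $X_k^A,X_k^B$ determine the orbit remains an expectation. The fallback via quasi-finiteness needs exactly the same fibre analysis.

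\textbf{The missing idea.} Use stability correctly: if three vectors $v_p,v_q,v_r$ lie in a plane $H$, then non-zero vectors in $H\cap H_A$ and $H\cap H_B$ give a subrepresentation of dimension vector $\vect{i}_p+\vect{i}_q+\vect{i}_r+\vect{i}_5+\vect{i}_6+2\vect{j}$, of $\overline{\Theta}_1$-value $0$. Hence every three of the $v_k$ form a basis, and you can normalize them to the frame $e_1,e_2,e_3,e_1+e_2+e_3$. This uses up the group up to scalars and leaves $H_A,H_B$ arbitrary. Then $D_k=\pm1$, and $E_1^A,E_2^A,E_3^A$ are the maximal minors of $A$. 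So $(X_1^A:X_2^A:X_3^A)$ and $(X_1^B:X_2^B:X_3^B)$ are the Pl\"ucker coordinates of $H_A$ and $H_B$, which gives injectivity at once. Moreover, since $D_k\neq0$, the condition $X_k^A=X_k^B=0$ would force $v_k\in H_A\cap H_B$, a subrepresentation of dimension vector $\vect{e}^1_k$. So the open stratum maps into the complement of $\bigcup_kQ_k$.

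\textbf{Assembling bijectivity.} ``Combining the strata'' needs more than injectivity on each stratum: the images of the eleven strata must be pairwise disjoint and must cover the target. In the paper this comes from three facts.
\begin{itemize}
\item The open stratum misses all $Q_k$, as above.
\item On the stratum where $\vect{e}^1_k$ splits off, after normalizing the $w_l$ one has $X_l^A=\pm\det[w_p\mid w_q]\det[w_l\mid a]$ for $\{l,p,q\}=\{1,2,3,4\}\setminus\{k\}$. This shows the stratum maps injectively into $Q_k$ and misses exactly the three points $Q_k\cap Q_l$, where the second summand becomes strictly semistable.
\item The six points $Q_k\cap Q_l$ are distinct, because no three of the lines $\{X_k^A=0\}$ are concurrent.
\end{itemize}
Surjectivity then follows because $p$ is projective and dominant. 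Your treatment of the two-dimensional and point strata points at the right picture, but it verifies neither injectivity (beyond analogy with the Grassmannian case) nor these disjointness and covering statements.
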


\begin{proof}
  Since the target is normal,
  it suffices by Zariski's main theorem to prove that
  the proper birational morphism~$p$ is bijective,
  which we do stratum by stratum.

  If~$(v_1,v_2,v_3,v_4,A,B)$ defines a~$\overline{\Theta}_1$-stable representation,
  every three~$v_p,v_q,v_r$ of the four vectors already form a basis:
  otherwise, let~$H$ be their span, and choose non-zero vectors~$v_A\in H\cap H_A$,~$v_B\in H\cap H_B$.
  They define a subrepresentation of dimension vector~$\vect{i}_p+\vect{i}_q+\vect{i}_r+\vect{i}_5+\vect{i}_6+2\vect{j}$,
  a contradiction to stability.
  We can thus normalize~$M$ to the form
  \begin{equation}
    [e_1\mid e_2\mid e_3\mid e_1+e_2+e_3\mid A\mid B]
  \end{equation}
  and read off the maximal minors of~$A$ and~$B$ from the~$D_kE_k^A$ and~$D_kE_k^B$,
  proving bijectivity over the open Luna stratum.
  Moreover, the image of a stable representation avoids the four quadrics~$Q_k$:
  since all~$D_k\not=0$,
  having~$X_k^A=X_k^B=0$ would mean~$v_k\in H_A\cap H_B$,
  which produces a subrepresentation of dimension vector~$\vect{e}^1_k$,
  contradicting stability as~$\overline{\Theta}_1(\vect{e}^1_k)=0$.

  Next, consider the two-dimensional Luna stratum~$((\vect{e}^1_4)^1,(\vect{d}-\vect{e}^1_4)^1)$, say.
  A polystable representation is the direct sum of
  the unique representation of dimension vector~$\vect{e}^1_4$,
  supported on a line in~$\mathbb{C}^3$,
  and a~$\overline{\Theta}_1$-stable representation of dimension vector~$\vect{d}-\vect{e}^1_4$,
  supported on a complementary plane,
  so we can normalize~$M$ to
  \begin{equation}
    M=\left[
      \begin{array}{cccccccc}0&0&0&1&1&0&1&0\\ w_1&w_2&w_3&0&0&a&0&b
    \end{array}\right]
  \end{equation}
  for~$w_1,w_2,w_3,a,b\in\mathbb{C}^2$,
  where the~$w_k$ are pairwise linearly independent:
  a proportionality between~$w_k$ and~$w_l$ would give a subrepresentation
  of dimension vector~$\vect{e}^2_{kl}$ of the second summand, contradicting its stability.
  We compute~$D_4=0$ and~$E_4^A=E_4^B=0$,
  whilst~$D_k=\pm\det[w_p\mid w_q]\not=0$ for~$k\leq 3$ and~$\{k,p,q\}=\{1,2,3\}$, so that
  \begin{equation}
    X_4^A=X_4^B=0,\quad
    X_k^A=\pm\det[w_p\mid w_q]\det[w_k\mid a],\quad
    X_k^B=\pm\det[w_p\mid w_q]\det[w_k\mid b].
  \end{equation}
  Thus the stratum maps into the quadric~$Q_4$,
  and normalizing~$(w_1,w_2,w_3)$ to~$(e_1,e_2,e_1+e_2)$,
  the remaining moduli~$([a],[b])\in\mathbb{P}^1\times\mathbb{P}^1$
  of the second summand
  are recovered from~$(X_1^A:X_2^A:X_3^A)$ and~$(X_1^B:X_2^B:X_3^B)$,
  proving injectivity on the stratum,
  and identifying the closure of its image with~$Q_4\cong\mathbb{P}^1\times\mathbb{P}^1$.
  The image omits exactly the three points~$[a]=[b]=[w_k]$,
  where the second summand becomes strictly semistable
  against the subrepresentation of dimension vector~$\vect{e}^1_k$.

  Finally, consider the six point strata~$((\vect{e}^1_k)^1,(\vect{e}^1_l)^1,(\vect{e}^2_{pq})^1)$;
  these are indeed points, all three summands being unique representations
  of their dimension vectors.
  Now~$v_p$ and~$v_q$ are proportional,
  so that~$D_k=D_l=0$,
  whilst~$D_p,D_q\not=0$
  and~$E_p^A,E_q^A,E_p^B,E_q^B\not=0$,
  as the columns of~$A$ and~$B$ span the plane
  supporting the first two summands,
  which does not contain~$v_p$ and~$v_q$.
  Thus precisely the coordinates~$X_k^A,X_l^A,X_k^B,X_l^B$ vanish,
  and the stratum maps to the intersection point~$Q_k\cap Q_l$,
  which consists of a single point
  as two distinct lines in~$\mathbb{P}^2$ meet in one point.
  These six points are pairwise distinct:
  no three of the lines~$\{X_k^A=0\}$ are concurrent,
  since the four coordinates~$X_k^A$ cannot vanish simultaneously on~$\mathbb{P}^2$.

  Summarizing, the eleven Luna strata map injectively
  to the pairwise disjoint locally closed subvarieties
  \begin{equation}
    \mathbb{P}^2\times\mathbb{P}^2\setminus\bigcup_{k=1}^4Q_k,\quad
    Q_k\setminus\bigcup_{l\not=k}(Q_k\cap Q_l),\quad
    Q_k\cap Q_l,
  \end{equation}
  which cover~$\mathbb{P}^2\times\mathbb{P}^2$.
  As~$p$ is moreover projective and dominant, hence surjective,
  it is bijective, and the claim follows.
\end{proof}

We have thus proved:
\begin{theorem}
  \label{theorem:QM3-fibration}
  The projection~$p_1\colon\QM{3}\rightarrow\mathbb{P}^2\times\mathbb{P}^2$ is a projective birational semismall morphism.
  In the target there is a configuration of four copies of~$\mathbb{P}^1\times\mathbb{P}^1$,
  namely the quadrics~$Q_1,\ldots,Q_4$ from \cref{lemma:QM3-target-quadrics},
  meeting pairwise in a single point, for a total of six points;
  over the complement of these points in the union of the quadrics
  the fibre is isomorphic to~$\mathbb{P}^1$,
  while over the six intersection points the fibre is isomorphic to~$\mathbb{P}^1\times\mathbb{P}^1$.
\end{theorem}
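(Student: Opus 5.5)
The plan is to assemble \cref{theorem:QM3-fibration} from results already established in \cref{subsection:QM3-first-projection}. The proof has three parts: the projection exists, we identify its target together with the discriminant, and we read off the fibres.

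First, the map is projective. By the lemma in \cref{subsection:QM3-projections}, $\overline{\Theta}_1$ lies in the closure of the chamber containing $\Theta=\Theta_\can$. Hence \cref{lemma:projection} gives an inclusion of semistable loci, and therefore a projective morphism $p_1\colon\QM{3}\to\modulispace[\overline{\Theta}_1\semistable]{Q^{(6)},\vect{d}}$. Composing with the isomorphism $p$ of \cref{lemma:QM3-target-quadrics} identifies the target with $\mathbb{P}^2\times\mathbb{P}^2$.

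Second, we identify the discriminant. Over the open Luna stratum $(\vect{d}^1)$, \cref{proposition:fibres-projection-wall} applies with a one-vertex local quiver, no loops and dimension vector $\vect{i}_1$, so the fibre is a point. Since this stratum is dense, $p_1$ is birational. \Cref{lemma:QM3-target-quadrics} shows that the two-dimensional strata $((\vect{e}^1_k)^1,(\vect{d}-\vect{e}^1_k)^1)$ map onto $Q_k$ minus its three intersection points with the other quadrics. It also shows that the six point strata map bijectively to the points $Q_k\cap Q_l$. Each such intersection is a single point, and these six points are distinct. I would add one remark: $Q_k\cap Q_l$ consists of exactly one point, since it is the product of the intersection points of two distinct lines in each factor $\mathbb{P}^2$. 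This gives the stated configuration of four quadrics meeting pairwise in six points.

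Third, we read off the fibres. The preceding proposition, which follows from \cref{lemma:linear-duality} together with \cref{lemma:fiber-type-qm-2-1,lemma:fiber-type-qm-2-2}, shows the fibre is $\mathbb{P}^1$ over the punctured quadrics and $\mathbb{P}^1\times\mathbb{P}^1$ over the six points. Semismallness is then a dimension count. For a stratum of dimension $s$ with fibres of dimension $f$, we need $s+2f\le 4$. The open stratum gives $4+0$, the quadric strata give $2+2\cdot1=4$, and the point strata give $0+2\cdot2=4$.

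The only real content beyond citation is ensuring that the Luna strata are exactly those listed, so that no other fibres occur. This was stated in \cref{subsection:QM3-first-projection}. If needed, I would double-check it by enumerating the solutions of $2f+e'+e''=4d$ with $(f,e',e'',d)\le(4,2,2,3)$. For each solution I would use \cref{lemma:stable-general-position} or a rigidity argument to discard the dimension vectors that carry no stables; for example, $\vect{i}_5+\vect{i}_6+\vect{j}$ alone violates stability.
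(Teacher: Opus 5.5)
Your proposal is correct and follows the paper's route: the theorem is assembled from the preceding proposition (fibres from the opposite local quivers via \cref{lemma:linear-duality}) together with the isomorphism and stratum description of \cref{lemma:QM3-target-quadrics}, and semismallness comes from the same dimension count. Two small slips, neither load-bearing: the local quiver of the open stratum $(\vect{d}^1)$ has $1-\langle\vect{d},\vect{d}\rangle=4$ loops rather than none (nilpotency forces them to vanish, so the fibre is still a point), and $\vect{i}_5+\vect{i}_6+\vect{j}$ does not satisfy $\overline{\Theta}_1(\vect{e})=0$, so it is not a candidate to discard; the ones that actually need discarding are, e.g., $2\vect{i}_5+2\vect{i}_6+\vect{j}$ (no injective maps) and $\vect{e}^1_k+\vect{e}^1_l$ (Euler form $2$, so no stables).
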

This is the analogue of \cref{theorem:QM2-grassmannian},
with~$\Gr(2,4)$ being replaced by~$\mathbb{P}^2\times\mathbb{P}^2$,
and the configuration of five planes
being replaced by four quadric surfaces.

\begin{remark}
  \label{remark:enrico}
  Upon seeing a draft of this paper,
  Enrico Fatighenti realized that there also exists a zero locus description of $\QM{3}$,
  despite it not appearing in any of the existing databases of Fano 4-fold zero loci.
  By encoding the blowup description from \cref{theorem:QM3-fibration}
  using vector bundles,
  he shows it is possible to realize~$\QM{3}$ as
  the zero locus of a general section of a rank-$16$ vector bundle
  on the~$20$-dimensional product
  \begin{equation}
    \mathbb{P}^2\times\mathbb{P}^2\times(\mathbb{P}^4)^4.
  \end{equation}
  Write~$\pi_1,\ldots,\pi_6$ for the projections of this product onto its six factors,
  and let~$\mathcal{Q}$ denote the rank-two tautological quotient bundles
  on~$\mathbb{P}^2$,
  defined by the Euler sequence~$0\to\mathcal{O}_{\mathbb{P}^2}(-1)\to\mathcal{O}^{\oplus3}\to\mathcal{Q}\to 0$.
  The bundle is given by
  \begin{equation}
    \mathcal{E}
    \colonequals
    \bigoplus_{j=3}^{6}
    \left(
      \pi_1^*\mathcal{Q}\otimes \pi_j^*\mathcal{O}_{\mathbb{P}^4}(1)
      \oplus
      \pi_2^*\mathcal{Q}\otimes \pi_j^*\mathcal{O}_{\mathbb{P}^4}(1)
    \right).
  \end{equation}

  It is a direct sum of four blocks, one for each~$\mathbb{P}^4$-factor:
  the summand indexed by~$j$ involves only the projections~$\pi_1$,~$\pi_2$,~$\pi_j$,
  and cuts out the blowup of~$\mathbb{P}^2\times\mathbb{P}^2$ along one of the four surfaces~$\mathbb{P}^1\times\mathbb{P}^1$ of~\cref{theorem:QM3-fibration}.
  Thus~$\QM{3}$ can be shown to be
  the fibre product over~$\mathbb{P}^2\times\mathbb{P}^2$ of these four blowups,
  and the projection~$(\pi_1,\pi_2)$ onto~$\mathbb{P}^2\times\mathbb{P}^2$
  restricts on it to the semismall morphism of~\cref{theorem:QM3-fibration}.
  Its anticanonical class is the restriction of~$-H_1-H_2+\sum_{j=3}^{6}h_j$,
  where~$H_i=\pi_i^*\mathcal{O}_{\mathbb{P}^2}(1)$ and~$h_j=\pi_j^*\mathcal{O}_{\mathbb{P}^4}(1)$
  are the hyperplane classes of the factors.
  Since this is not the restriction of an ample class from the ambient product
  (a Fano zero locus for which the anticanonical bundle is the restriction is called strongly Fano),
  the description falls outside the existing databases.
\end{remark}

\begin{remark}
  \label{remark:QM3-second-projection}
  As for~$\QM{2}$,
  one can also consider the projection~$p_2\colon\QM{3}\to\modulispace[\overline{\Theta}_2\semistable]{Q^{(6)},\vect{d}}$
  for the second stability parameter.
  The analogy with \cref{subsection:QM2-second-projection} is imperfect though:
  there are no~$\overline{\Theta}_2$-stable representations of dimension vector~$\vect{d}$,
  because a non-zero vector~$v\in H_A\cap H_B$ induces
  a subrepresentation of dimension vector~$\vect{i}_5+\vect{i}_6+\vect{j}$,
  on which~$\overline{\Theta}_2$ vanishes.
  In fact, a general~$\overline{\Theta}_2$-polystable representation is the direct sum of
  the unique stable representation of dimension vector~$\vect{i}_5+\vect{i}_6+\vect{j}$
  and a stable representation of dimension vector~$\sum_{k=1}^6\vect{i}_k+2\vect{j}$,
  so that~$p_2$ is a fibration onto a threefold
  instead of a birational morphism.
  One can identify this threefold with
  the toric del Pezzo threefold given by the intersection of two quadrics with six ordinary double points.
\end{remark}

\section{The variety \QM{4}: moduli of points and the Fano model of \texorpdfstring{$\Bl_6\mathbb{P}^4$}{Bl6 P4}}
\label{section:QM4}

This moduli space has Betti numbers~$(1,7,22,7,1)$.
We first exhibit the morphism to the Segre cubic announced in \cref{theorem:main},
and then identify~$\QM{4}$ with the Fano model of~$\Bl_6\mathbb{P}^4$
in \cref{subsection:fano-model}.

\subsection{Framed structure}
By~\cref{proposition:subspace-framed}, the moduli space~$\QM{4}$ is isomorphic to the framed moduli space of~$\modulispace{1^6;2}$ with framing datum~$\vect{j}$.
The latter moduli space is nothing other than the Segre cubic~$S$ by \cite{MR2450346},
that is, the GIT quotient of six ordered points on~$\mathbb{P}^1$ with symmetric linearization.
It has eleven Luna strata; the open one is given by the decomposition type~$((1^6;2)^1)$,
and the ten others correspond to the ten singular points of~$S$.
Namely, for any unordered decomposition~$\{1,2,3,4,5,6\}=K\cup\overline{K}$ into disjoint three-element subsets,
we have the decomposition type~$(\vect{d}_K^1,\vect{d}_{\overline{K}}^1)$,
where~$\vect{d}_{\{p,q,r\}}=\vect{i}_p+\vect{i}_q+\vect{i}_r+\vect{j}$.

This allows us to determine the fibres of the projection~$p\colon\QM{4}\rightarrow S$.
The general fibre is~$\mathbb{P}^1$.
The fibre over each of the ten singular points is given as the moduli space of stable nilpotent representations of the framing of the local quiver,
for dimension vector~$\vect{i}_0+\vect{i}_1+\vect{i}_2$ and stability~$2\vect{i}_0^*-\vect{i}_1^*-\vect{i}_2^*$. Such a representation is given by
\begin{equation}
  \begin{tikzpicture}[node distance = 2.5cm]
    \node (a) {$\mathbb{C}$};
    \node (b) [right of = a] {$\mathbb{C}$};
    \node (c) at ($(a)!0.5!(b) + (0,1.1cm)$) {$\mathbb{C}$};
    \draw[->] (c) edge node [left] {$x$} (a);
    \draw[->] (c) edge node [right] {$y$} (b);
    \draw[->, bend left = 45] (a) edge node [above] {$a$} (b);
    \draw[->, bend left = 15] (a) edge node [midway, fill=white] {$b$} (b);
    \draw[->, bend left = 15] (b) edge node [midway, fill=white] {$c$} (a);
    \draw[->, bend left = 45] (b) edge node [below] {$d$} (a);
  \end{tikzpicture}
\end{equation}
for scalars~$x,y,a,b,c,d$ such that~$ac=ad=bc=bd=0$, and such that the vectors~$(x,cy,dy)$ and~$(y,ax,bx)$ are non-zero. This provides us with an embedding of the moduli space into~$\mathbb{P}^2\times\mathbb{P}^2$ with coordinates~$((x_0:x_1:x_2),(y_0:y_1:y_2))=((x:cy:dy),(y:ax:bx))$, whose defining equations~$x_iy_j=0$ for~$i,j=1,2$ are immediate from~$ac=ad=bc=bd=0$. This defines the union~$\mathbb{P}^2\vee\mathbb{P}^2$ of two copies of~$\mathbb{P}^2$ glued at a single point.

This description is compatible with the birational geometry of symmetric quotients of point configurations on~$\mathbb{P}^1$ studied by Bolognesi and Massarenti \cite{MR4243655},
to which we return in \cref{subsection:fano-model};
our point here is that quiver-moduli techniques produce this fibre picture directly from local quiver calculations.

\begin{theorem}
  \label{theorem:QM4-segre-cubic}
  The Fano fourfold~$\QM{4}$ admits a morphism to the Segre cubic,
  with general fibre~$\mathbb{P}^1$,
  and fibre~$\mathbb{P}^2\vee\mathbb{P}^2$ over each of the ten singular points.
\end{theorem}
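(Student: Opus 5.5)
The plan is to assemble the morphism and its fibres from the framing picture of \cref{subsection:framing}, using the identification already recorded before the statement.

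\textbf{Step 1: the morphism.} By \cref{proposition:subspace-framed} with $\vect{d}=(1^6;2)$, where $2$ divides $6$, we have $\QM{4}\cong\modulispace[\Theta_\can\framed]{Q^{(6)},(1^6;2),\vect{j}}$. The framing construction gives a projective morphism $p$ to $\modulispace{1^6;2}$ as in \eqref{equation:projection-to-wall}, and this target is the Segre cubic $S$ by \cite{MR2450346}. Surjectivity of $p$ follows since $p$ is projective and its image contains the dense open stratum.

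\textbf{Step 2: classify the Luna strata of $S$.} Polystable summands $\vect{e}\le(1^6;2)$ with $\Theta_\can(\vect{e})=0$ must satisfy $\vect{e}=\sum_{k\in K}\vect{i}_k+\vect{j}$ with $|K|=3$. Each such $\vect{e}$ has a unique stable representation, namely three distinct points of $\mathbb{P}^1$ modulo $\PGL_2$. This yields the open stratum together with ten point strata, matching the ten nodes of $S$.

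\textbf{Step 3: the fibres.} We apply \cref{proposition:fibres-framed}.
\begin{itemize}
  \item Over the open stratum, the local quiver is a single vertex with $1-\langle\vect{d},\vect{d}\rangle=4$ loops, with dimension vector $1$ and framing $\vect{j}\cdot\vect{d}=2$. Nilpotency kills the loops, so the fibre is the framed moduli space $\mathbb{P}^1$.
  \item Over a node of type $(\vect{d}_K,\vect{d}_{\overline{K}})$, we compute $\langle\vect{d}_K,\vect{d}_{\overline{K}}\rangle=-2$ and $\langle\vect{d}_K,\vect{d}_K\rangle=1$. The local quiver is therefore the $2$-Kronecker quiver in both directions, with no loops, and with framing $1$ at each vertex. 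This is exactly the representation displayed before the statement.
\end{itemize}

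\textbf{Step 4: the main obstacle.} The hardest part is verifying that the embedding into $\mathbb{P}^2\times\mathbb{P}^2$ has image exactly $\{x_iy_j=0\}$, scheme-theoretically reduced, which is $\mathbb{P}^2\vee\mathbb{P}^2$. The plan is as follows.
\begin{itemize}
  \item \emph{Stability.} Stability says the framing vector generates, i.e.\ $(x,cy,dy)\ne0\ne(y,ax,bx)$. Nilpotency gives $ac=ad=bc=bd=0$.
  \item \emph{Component decomposition.} Either $(a,b)=0$ or $(c,d)=0$. In the first case, stability forces $y\ne0$; normalising $y=1$ via the torus $\Gm^2$ leaves $(x:c:d)\in\mathbb{P}^2$ freely. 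The second case is symmetric.
  \item \emph{Intersection.} The two planes meet only where $a=b=c=d=0$ and $x,y\ne0$, which is a single point.
  \item \emph{Reducedness.} Isomorphism onto the image follows since the torus action is free on the stable locus and the coordinates separate orbits; I would check injectivity and immersivity chart by chart. Reducedness of the fibre can be argued by noting that the nilpotent framed moduli space is the quotient of the reduced variety $\{ac=ad=bc=bd=0\}$ restricted to the stable locus.
\end{itemize}
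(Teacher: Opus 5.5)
Your proposal is correct and follows the paper's own argument. You identify $\QM{4}$ with the framed moduli space over $\modulispace{1^6;2}\cong S$ via \cref{proposition:subspace-framed}, read off the eleven Luna strata, and compute the fibre over each node from the framed local quiver (two arrows in each direction, no loops, framing $1$ at each vertex), embedded in $\mathbb{P}^2\times\mathbb{P}^2$ via $((x:cy:dy),(y:ax:bx))$; your component-by-component check is a slightly more explicit version of the paper's equations $x_iy_j=0$.

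There are two slips, neither of which affects the conclusions. First, for $\vect{d}=(1^6;2)$ one has $\langle\vect{d},\vect{d}\rangle=-2$, so the generic local quiver has $3$ loops rather than $4$. Second, the stable representation of dimension vector $\vect{i}_p+\vect{i}_q+\vect{i}_r+\vect{j}$ has a one-dimensional space at $j$ (three non-zero scalars), not three distinct points on $\mathbb{P}^1$.
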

In particular, the morphism is not flat:
the dimension of the fibres jumps from~1 to~2 over the singular points.

\subsection{Identification with the Fano model of \texorpdfstring{$\Bl_6\mathbb{P}^4$}{Bl6P4}}
\label{subsection:fano-model}
By \cref{theorem:main},
the moduli space~$\QM{4}$ is the GIT quotient~$((\mathbb{P}^1)^7)_{\rm st}/\!/\PGL_2$,
where the stability condition \eqref{equation:stability}
induced by the canonical stability parameter
is the classical stability condition for the symmetric linearization:
a configuration of seven ordered points on~$\mathbb{P}^1$ is (semi)stable
if and only if at most three of the points coincide,
there being no strictly semistable configurations because the number of points is odd.
Thus~$\QM{4}$ is the classical GIT moduli space of seven points on the line,
denoted~$\Sigma_4$ in \cite{MR4243655},
in the same way that the Segre cubic~$S$ is
the moduli space~$\Sigma_3$ of six points on the line
\cite[Example~1.4]{MR4243655}.

The fourfold~$\Bl_6\mathbb{P}^4$ is itself not Fano:
the strict transform~$\ell$ of a line through two of the six points
satisfies~$-\mathrm{K}\cdot\ell=-1$.
Flipping the fifteen such lines produces a small modification
on which the anticanonical divisor is ample,
called the \emph{Fano model} of~$\Bl_6\mathbb{P}^4$:
it is a smooth Fano fourfold of Picard rank~7,
which is neither toric nor a product,
see \cite[Example~25]{MR4622141}.
The identification announced in \cref{theorem:main}
is then the following result of Bolognesi--Massarenti \cite[\S2.35]{MR4243655}.

\begin{proposition}
  \label{proposition:QM4-fano-model}
  The quiver moduli space~$\QM{4}$ is isomorphic to the Fano model of~$\Bl_6\mathbb{P}^4$.
\end{proposition}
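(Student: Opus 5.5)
The plan follows the strategy used for \cref{theorem:segre-cousin}: construct a birational map $f\colon\Bl_6\mathbb{P}^4\dashrightarrow\QM{4}$ that is an isomorphism in codimension one, and then apply \cref{proposition:lehn} to the Fano model. The birational map comes from the Kapranov-type description of point configurations. Normalize the seventh point $x_7$ and use the remaining six points together with $x_7$. Fixing three points of $\mathbb{P}^1$ is not symmetric, so instead I would use the classical Gale-duality/Kapranov identification. This identifies $\overline{\mathrm{M}}_{0,7}$ with an iterated blowup of $\mathbb{P}^4$ in six general points (a projective frame plus one point), then in lines, planes, and so on. Since the six points may be taken in general linear position, all choices are projectively equivalent, just as in \cref{lemma:uniqueness-of-configuration}.

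The key steps are as follows.
\begin{enumerate}
\item Recall the morphisms $\overline{\mathrm{M}}_{0,7}\to\Bl_6\mathbb{P}^4$, obtained by contracting the boundary divisors $\delta_{I}$ with $7\in I$, $|I|\ge3$, except those corresponding to the exceptional divisors over the points. Also recall $\overline{\mathrm{M}}_{0,7}\to\QM{4}$, the symmetric GIT quotient, which by \cref{theorem:main} is the classical $\Sigma_4$. The latter contracts exactly the boundary divisors $\delta_{I}$ with $|I|=3$ or $|I|=4$, since $|I|=2$ gives the divisors where two points collide, which survive as stable configurations.
\item Compare divisorial contractions. Under Kapranov's model, the divisors $\delta_{\{i,7\}}$ map to the exceptional divisors $E_i$, and the $\delta_{\{i,j\}}$, $i,j\le6$, map to the strict transforms of the hyperplanes spanned by the other four points. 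The divisors of type $|I|=3$ map to lines and planes in $\mathbb{P}^4$, which $\Bl_6\mathbb{P}^4$ contracts. Hence the divisors contracted by both maps coincide, and the induced birational map $\QM{4}\dashrightarrow\Bl_6\mathbb{P}^4$ is an isomorphism in codimension one. I would check this by counting Picard ranks: both have rank $7=\binom72-\binom73-\ldots$ consistent, with $|\supp(\vect{d})|-1=7$ by \cref{theorem:fano-quiver-moduli}.
\item Flips in codimension $\ge2$ (the fifteen lines) do not affect codimension-one isomorphism. The same therefore holds between $\QM{4}$ and the Fano model $Y$, which is a small modification of $\Bl_6\mathbb{P}^4$.
\item Apply \cref{proposition:lehn} with $L=-\mathrm{K}_Y$. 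Both varieties are smooth, so canonical divisors correspond under a codimension-one isomorphism by Hartogs, and $f^*(-\mathrm{K}_Y)=-\mathrm{K}_{\QM{4}}$ is ample by \cref{theorem:fano-quiver-moduli}.
\end{enumerate}

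The main obstacle is step two: precisely matching which boundary divisors of $\overline{\mathrm{M}}_{0,7}$ are contracted on each side. On the GIT side, Kapranov and Hassett's results describe $\Sigma_4$ as the weighted space with weights $(1/3+\epsilon)$. This contracts $\delta_I$ for $|I|=3$ onto $\overline{\mathrm{M}}_{0,5}$-type loci of dimension $\le2$. On the Kapranov side I would verify that these are exactly the divisors over the lines $\langle p_i,p_j\rangle$ (the $\delta_{\{i,j,7\}}$) and the planes (the $\delta_I$, $|I|=3$, $7\notin I$), up to complement symmetry. If this bookkeeping proves delicate, I would instead cite Bolognesi--Massarenti's Mori chamber decomposition of $\overline{\mathrm{M}}_{0,7}$, or more directly their \cite[\S2.35]{MR4243655}, where $\Sigma_4$ appears as a small $\mathbb{Q}$-factorial modification of $\Bl_6\mathbb{P}^4$ in the same chamber as the anticanonical model.
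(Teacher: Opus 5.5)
Your argument is correct, but it takes a different route from the paper. The paper does not prove the statement itself: it observes that \eqref{equation:stability} for $\Theta_\can$ and $(1^7;2)$ means at most three of the seven points coincide, so $\QM{4}$ is the classical symmetric GIT quotient $\Sigma_4$, and then quotes Bolognesi--Massarenti \cite[\S2.35]{MR4243655}.

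You instead reprove the cited identification by the method of \cref{theorem:segre-cousin}:
\begin{enumerate}
  \item The Kapranov morphism $\overline{\mathrm{M}}_{0,7}\to\Bl_6\mathbb{P}^4$ and the reduction morphism $\overline{\mathrm{M}}_{0,7}\to\Sigma_4$ contract the same $35=15+20$ divisors $\delta_I$ with $|I|=3$. These are the $\delta_{\{i,j,7\}}$ over the lines and the $\delta_{\{l,m,n,7\}}=\delta_{\{i,j,k\}}$ over the planes.
  \item Both targets are smooth, so both exceptional loci are purely divisorial and equal the union $E$ of these divisors. Hence $\overline{\mathrm{M}}_{0,7}\setminus E$ is an open subset of both, with complement of codimension at least two.
  \item Composing with the small modification to the Fano model $Y$, you apply \cref{proposition:lehn} with $L=-\mathrm{K}_Y$, which works because $-\mathrm{K}_{\QM{4}}$ is ample by \cref{theorem:fano-quiver-moduli}.
\end{enumerate}

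Your route costs Kapranov's blowup theorem and the existence of the reduction morphism. In return it needs no input from \cite{MR4243655}, and it shows that $\QM{4}$ is the anticanonical model of $\Bl_6\mathbb{P}^4$ however the flips are constructed. It also makes the flips visible: $\delta_{\{i,j,7\}}$ lies over the line $\langle p_i,p_j\rangle$ in $\Bl_6\mathbb{P}^4$, but over the plane $\{x_i=x_j=x_7\}\cong\mathbb{P}^2$ in $\Sigma_4$. The citation, by contrast, takes one line and brings along the further information used in the remark that follows, such as the fibres of $\Sigma_4\to\Sigma_3$ and $\Aut(\QM{4})\cong\mathrm{S}_7$.

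Three details need correcting.
\begin{enumerate}
  \item The symmetric Hassett weights giving $\Sigma_4$ are $2/7+\epsilon$ (any $a\in(2/7,1/3]$), not $1/3+\epsilon$. For $a=1/3+\epsilon$ no three points may collide, the divisors $\delta_I$ with $|I|=3$ survive as tails, and nothing is contracted; that space is still $\overline{\mathrm{M}}_{0,7}$.
  \item In step~1, the Kapranov contraction is of the $\delta_I$ with $7\in I$ and $|I|\in\{3,4\}$ only. The case $|I|=5$ gives the $\delta_{\{m,n\}}$, the strict transforms of the hyperplanes, which survive, as you correctly say in step~2. The ``except'' clause is vacuous.
  \item The Picard count is $\rho(\overline{\mathrm{M}}_{0,7})=2^6-\binom{7}{2}-1=42$, with $42-35=7$. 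Equality of Picard ranks is only a consistency check; the codimension-one statement comes from matching the contracted divisors.
\end{enumerate}
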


\begin{remark}
  Under the identification~$\QM{4}\cong\Sigma_4$,
  the morphism~$p\colon\QM{4}\to S$ from \cref{theorem:QM4-segre-cubic}
  corresponds to the forgetful morphism~$\Sigma_4\to\Sigma_3$,
  and the description of its fibres agrees with \cite[\S2.40--2.42]{MR4243655}.
  Similarly,
  the Hilbert series of~$\Sigma_4$ computed in \cite[Corollary~3.5]{MR4243655},
  together with the identification~$\omega_{\Sigma_4}\cong\mathcal{O}_{\Sigma_4}(-1)$
  of \cite[Lemma~2.28]{MR4243655},
  gives~$\deg\Sigma_4=154$ and~$\mathrm{h}^0(\Sigma_4,\mathcal{O}_{\Sigma_4}(1))=36$,
  independently confirming the values of~$\mathrm{c}_1^4$ and~$\mathrm{h}^0(\omega^\vee)$
  in \cref{table:overview}.
  Moreover,~$\Aut(\QM{4})\cong\mathrm{S}_7$ by \cite[Theorem~4.7]{MR4243655}:
  all automorphisms are induced by permutations of the seven points,
  i.e., of the vertices~$i_1,\ldots,i_7$ of the subspace quiver~$Q^{(7)}$,
  consistently with \cref{lemma:no-vector-fields}.
\end{remark}

\renewcommand*{\bibfont}{\normalfont\small}
\setlength{\bibitemsep}{1pt}
\printbibliography

\emph{Pieter Belmans}, \url{p.belmans@uu.nl} \\
Mathematical Institute, Utrecht University, Budapestlaan 6, 3584CD Utrecht, Netherlands

\emph{Markus Reineke}, \url{markus.reineke@ruhr-uni-bochum.de} \\
Faculty of Mathematics, Ruhr-Universit\"at Bochum, Universit\"atsstra\ss e 150, 44780 Bochum, Germany

\end{document}